    \crefname{equation}{}{}
\theoremstyle{plain}
    \newtheorem{theorem}{Theorem}[section]
    \newtheorem{lemma}[theorem]{Lemma}
    \newtheorem{corollary}[theorem]{Corollary}
    \newtheorem{question}[theorem]{Question}
\theoremstyle{definition}
    \newtheorem{definition}[theorem]{Definition}
\theoremstyle{remark}
    \newtheorem{remark}[theorem]{Remark}
    \numberwithin{equation}{section}
\let\Im\relax
\def\ceil#1{\lceil #1 \rceil}
\def\et{\mathrm{\acute{e}t}}
\def\bcube{{\overline{\square}}}
\def\logrm{\mathrm{log}}
\def\RGamma{\mathrm{R}\Gamma}
\def\WOmega{\mathrm{W}\Omega}
\def\MCor{\mathrm{\underline{M}Cor}}
\def\MDM{\mathrm{\underline{M}DM}}
\def\MOmega{\mathrm{\underline{M}\Omega}}
\def\boldMOmega{\mathbf{\underline{M}\Omega}}
\def\MW{\underline{\mathrm{M}}\mathrm{W}}
\def\boldMW{\underline{\mathbf{M}}\mathbf{W}}
\def\defrm#1{\expandafter\def\csname #1\endcsname{{\mathrm {#1}}}}
\def\defop#1{\expandafter\DeclareMathOperator\csname #1\endcsname{#1}}
\def\defrms{\forcsvlist{\defrm}}
\def\defops{\forcsvlist{\defop}}
\def\quotprojlim{%
  \mathop{``\mathpalette\varlim@{\leftarrowfill@\textstyle}\hbox to 0pt{$"$}}\nmlimits@
}
\def\quotinjlim{%
  \mathop{``\mathpalette\varlim@{\rightarrowfill@\textstyle}"}\nmlimits@
}
\begin{document}
\title{Blow-up invariance of cohomology theories with modulus}

\author[J. Koizumi]{Junnosuke Koizumi}
\address{Graduate School of Mathematical Sciences, University of Tokyo, 3-8-1 Komaba, Meguro-ku, Tokyo 153-8914, Japan}
\email{jkoizumi@ms.u-tokyo.ac.jp}

\date{\today}
\thanks{}

\begin{abstract}
    In this paper, we study cohomology theories of $\mathbb{Q}$-modulus pairs, which are pairs $(X, D)$ consisting of a scheme $X$ and a $\mathbb{Q}$-divisor $D$.
    Our main theorem provides a sufficient condition for such a cohomology theory to be invariant under blow-ups with centers contained in the divisor.
    This yields a short proof of the blow-up invariance of the Hodge cohomology with modulus proved by Kelly-Miyazaki.
    We also define the Witt vector cohomology with modulus using the Brylinski-Kato filtration and prove its blow-up invariance.
\end{abstract}


\maketitle
\setcounter{tocdepth}{1}
\tableofcontents

\enlargethispage*{20pt}

\section{Introduction}

In the 1990s, Voevodsky developed the theory of mixed motives that unifies $\mathbb{A}^1$-invariant cohomology theories of smooth algebraic schemes over a perfect field $k$.
On the other hand, there are many cohomology theories that do not have $\mathbb{A}^1$-invariance.
Typical examples are the Hodge cohomology $\RGamma(X,\Omega^q_X)$ and the Hodge-Witt cohomology $\RGamma(X,\WOmega^q_X)$.
In order to incorporate these invariants into the theory of motives, Kahn-Miyazaki-Saito-Yamazaki \cite{KMSY1,KMSY2,KMSY3} proposed a framework called \emph{motives with modulus}.

The basic idea of motives with modulus is to replace smooth schemes by \emph{modulus pairs}.
A modulus pair is a pair $\mathcal{X}=(X,D_X)$ where $X$ is an algebraic $k$-scheme and $D_X$ is an effective Cartier divisor on $X$ such that $X^\circ:=X-|D_X|$ is smooth.
Modulus pairs and finite correspondences between them constitute an additive category $\MCor$ with a tensor structure $\mathcal{X}\otimes\mathcal{Y}=(X\times Y, \pr_1^*D_X+\pr_2^*D_Y)$.
Just as $\mathbb{A}^1$ plays the role of the interval in Voevodsky's theory, the modulus pair $\bcube:=(\mathbb{P}^1,[\infty])$, which is called the \emph{cube}, plays the role of the interval.

A Nisnevich sheaf of abelian groups $F$ on $\MCor$ which satisfies the cube invariance $F(\mathcal{X})\simeq F(\mathcal{X}\otimes\bcube)$ is simply called a \emph{cube invariant sheaf}.
Various studies have been conducted for cube invariant sheaves and their cohomolgies following analogies in Voevodsky's theory of $\mathbb{A}^1$-invariant sheaves.
The following natural conditions are often imposed on $F$:
\begin{enumerate}
	\item[(a)]	(semipurity) $F(\mathcal{X})\to F(X^\circ)$ is injective for any $\mathcal{X}\in \MCor$.
	\item[(b)]	(M-reciprocity \cite[Definition 1.26]{Sai20}) $F$ is left Kan extended from the full subcategory of $\MCor$ spanned by proper modulus pairs.
\end{enumerate}

We write $\MCor^\ls$ for the full subcategory of $\MCor$ spanned by modulus pairs $(X,D_X)$ such that $X$ is smooth and $|D_X|$ has simple normal crossings.
The following results are of particular importance:

\begin{theorem}\label{Saito_BRS}
    Let $F$ be a cube invariant sheaf which satisfies (a) and (b).
    \begin{enumerate}
        \item (Saito \cite[Theorem 9.3]{Sai20}) For any $\mathcal{X}\in \MCor^\ls$, the morphism $\RGamma(X,F_\mathcal{X})\to \RGamma(X\times\mathbb{P}^1,F_{\mathcal{X}\otimes \bcube})$ is an isomorphism.
        \item (Binda-R\"ulling-Saito \cite[Lemma 2.10]{BRS}) For any effective divisors $E_1,\dots,E_n$ on $\mathbb{A}^1$ and $\mathcal{X}\in \MCor^\ls$, we have $\mathrm{R}^i\pr_{1,*}F_{\mathcal{X}\otimes(\mathbb{A}^1,E_1)\otimes\cdots\otimes(\mathbb{A}^1,E_n)}=0$ for $i>0$.
    \end{enumerate}
\end{theorem}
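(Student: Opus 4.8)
The plan is to regard both statements as cohomological upgrades of the section-level cube invariance $F(\mathcal{X})\simeq F(\mathcal{X}\otimes\bcube)$, using semipurity (a) and M-reciprocity (b) to pass from sections to Nisnevich cohomology.

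For (1) I would first reduce to a statement about sheaves on $X$. Writing $p\colon X\times\mathbb{P}^1\to X$ for the projection, the comparison map is induced by the unit $F_\mathcal{X}\to\mathrm{R}p_*F_{\mathcal{X}\otimes\bcube}$, so after applying $\RGamma(X,-)$ it suffices to prove that this unit is an isomorphism of complexes of Nisnevich sheaves. In degree $0$ this is precisely cube invariance applied Nisnevich-locally on $X$: since $\MCor^\ls$ is stable under \'etale localization, $(U,D_X|_U)\otimes\bcube$ is admissible for every \'etale $U\to X$, and $F(\mathcal{U})\simeq F(\mathcal{U}\otimes\bcube)$ gives $p_*F_{\mathcal{X}\otimes\bcube}=F_\mathcal{X}$. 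Because $p$ has relative dimension $1$, the Nisnevich cohomological dimension of its fibres kills $\mathrm{R}^ip_*$ for $i\geq 2$, so the entire content is the vanishing of $\mathrm{R}^1p_*F_{\mathcal{X}\otimes\bcube}$. Here I would exploit the cubical structure of $\bcube$ --- its multiplication $\bcube\otimes\bcube\to\bcube$ and the distinguished rational points --- together with the transfers built into $\MCor$ to produce a contracting homotopy on the cohomology complex, with semipurity ensuring that a homotopy defined on the generic fibre descends to the sheaf.

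For (2) I would build on (1) and argue by induction on $n$, the case $n=0$ being trivial. Factoring $X\times(\mathbb{A}^1)^n\to X\times(\mathbb{A}^1)^{n-1}\to X$ and using the Leray spectral sequence, the inductive hypothesis reduces everything to the one-variable vanishing $\mathrm{R}^i\pr_{1,*}F_{\mathcal{X}\otimes(\mathbb{A}^1,E)}=0$ for $i>0$ over an arbitrary base in $\MCor^\ls$. This I would attack by compactifying along $\mathbb{A}^1\hookrightarrow\mathbb{P}^1$: the modulus $\pr_1^*D_X+\overline{E}+[\infty]$ on $X\times\mathbb{P}^1$ restricts to $\mathcal{X}\otimes(\mathbb{A}^1,E)$ over $X\times\mathbb{A}^1$ and stays in $\MCor^\ls$, since the vertical fibres over $E\cup\{\infty\}$ are disjoint smooth divisors meeting $\pr_1^*D_X$ transversally. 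Comparing the cohomology of the compactified sheaf, which is controlled by the $\mathbb{P}^1$-techniques behind (1), with the affine one through the localization sequence along the fibre at $\infty$ should then yield the desired vanishing; M-reciprocity is what legitimises this reduction to proper modulus pairs, and semipurity keeps the relevant restriction maps injective.

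The step I expect to be the main obstacle is the higher-cohomology vanishing in both parts --- the vanishing of $\mathrm{R}^1p_*$ in (1) and its affine refinement in (2). The difficulty is structural: $\mathbb{A}^1$ is not contractible for a general cube invariant sheaf, and indeed the whole theory exists precisely because $\mathbb{A}^1$-invariance fails, so the vanishing cannot be produced by homotopy invariance and must instead be wrung out of the interplay between cube invariance and the boundary data recorded by the modulus $[\infty]$. Making the contracting homotopy of (1) compatible with the Nisnevich sheaf structure, and controlling the cohomology concentrated near the fibre at infinity in (2), are exactly where semipurity and M-reciprocity are indispensable, and I expect these to require the most care.
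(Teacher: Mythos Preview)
This theorem is not proved in the paper at all: it is stated in the introduction purely as background, with explicit attributions to Saito \cite[Theorem 9.3]{Sai20} for part (1) and to Binda--R\"ulling--Saito \cite[Lemma 2.10]{BRS} for part (2). There is therefore no proof in the paper against which to compare your proposal. The paper's own contribution (\Cref{main}) is formulated for sheaves on $\MCor^\mathbb{Q}$ and \emph{assumes} cohomological cube invariance and the affine vanishing property as hypotheses rather than deriving them from cube invariance together with (a) and (b).

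As for the sketch itself: the reductions you outline for (1) --- passing to $\mathrm{R}p_*$, handling degree $0$ by section-level cube invariance, and bounding the cohomological dimension of $p$ --- are the natural first moves, and your identification of the $\mathrm{R}^1p_*$ vanishing as the real difficulty is accurate. But the phrase ``produce a contracting homotopy on the cohomology complex'' using the multiplication on $\bcube$ is doing a lot of unexplained work; Saito's actual argument is considerably more involved and goes through a theory of modulus local symbols and a delicate analysis at the boundary. Likewise, for (2) the compactification-and-localization strategy is plausible in outline, but ``controlling the cohomology concentrated near the fibre at infinity'' is precisely the hard step, and nothing in your sketch indicates how semipurity or M-reciprocity would close that gap. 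So the proposal correctly locates the obstacles but does not yet contain the ideas needed to overcome them; if you want to supply a self-contained proof you will need to consult the cited references.
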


In this paper, we investigate the following question posed by Kahn-Miyazaki-Saito-Yamazaki: 

\begin{question}[Kahn-Miyazaki-Saito-Yamazaki {\cite[Question 2]{KMSY1}}]\label{KMSY_question}
    Let $F$ be a cube invariant sheaf which satisfies (a) and (b).
    Let $\mathcal{X}\in \MCor^\ls$ and let $Z\subset |D_X|$ be a smooth closed subscheme which has simple normal crossings with $|D_X|$.
    Is the morphism
    $$
        \RGamma(X,F_\mathcal{X})\to \RGamma(\Bl_ZX,F_{\Bl_Z\mathcal{X}})
    $$
    an isomorphism?
\end{question}

Binda-R\"ulling-Saito proved that \Cref{KMSY_question} is true if $D_X$ is reduced \cite[Theorem 2.12]{BRS}.
Recently, Kelly-Miyazaki \cite{KellyMiyazaki_Hodge1,KellyMiyazaki_Hodge2} defined the sheaf of differential forms $\MOmega^q$ for modulus pairs over a field of characteristic $0$, which is characterized by the formula
$$
    \MOmega^q(\mathcal{X})=\Gamma\bigl(X,\Omega^q(\log |D_X|)(D_X-|D_X|)\bigr)
$$
for $\mathcal{X}\in \MCor^\ls$, and proved that \Cref{KMSY_question} is true for $F=\MOmega^q$ \cite[Theorem 6.1]{KellyMiyazaki_Hodge2}.
On the other hand, R\"ulling-Saito showed that \Cref{KMSY_question} is not true in general \cite[Section 6.9]{RSram3}.

In this paper, we take a new approach to this problem: we use \emph{$\mathbb{Q}$-modulus pairs} instead of modulus pairs, which allows $D_X$ to be a \emph{$\mathbb{Q}$-divisor}.
This concept was introduced by the author and Miyazaki \cite{Koizumi-Miyazaki} to give a motivic construction of the de Rham-Witt complex.
There is a category of $\mathbb{Q}$-modulus pairs $\MCor^\mathbb{Q}$ which is similar to $\MCor$.
We consider the following natural conditions:

\begin{definition}
    Let $F$ be a Nisnevich sheaf on $\MCor^\mathbb{Q}$.
    \begin{enumerate}
        \item We say that $F$ has \emph{cohomological cube invariance} if for any $\mathcal{X}\in \MCor^{\mathbb{Q},\ls}$, the morphism $\RGamma(X,F_\mathcal{X})\to \RGamma(X\times\mathbb{P}^1,F_{\mathcal{X}\otimes \bcube})$ is an isomorphism.
        \item We say that $F$ has \emph{affine vanishing property} if for any effective divisors $E_1,\dots,E_n$ on $\mathbb{A}^1$ and $\mathcal{X}\in \MCor^{\mathbb{Q},\ls}$, we have $\mathrm{R}^i\pr_{1,*}F_{\mathcal{X}\otimes(\mathbb{A}^1,E_1)\otimes\cdots\otimes(\mathbb{A}^1,E_n)}=0$ for $i>0$.
        \item We say that $F$ has \emph{left continuity} if the canonical map
    $\colim_{\varepsilon \to 0} F\bigl(X,(1-\varepsilon)D_X\bigr)\to F(\mathcal{X})$
    is an isomorphism for any $\mathcal{X}\in \MCor^{\mathbb{Q},\ls}$.
    \end{enumerate}
\end{definition}

The main result of this paper is the following:

\begin{theorem}\label{main}
    Let $F$ be a Nisnevich sheaf on $\MCor^\mathbb{Q}$ which has cohomological cube invariance, affine vanishing property, and left continuity.
    Let $\mathcal{X}\in \MCor^\ls$ and let $Z\subset |D_X|$ be a smooth closed subscheme which has simple normal crossings with $|D_X|$.
    Then the morphism
    $$
        \RGamma(X,F_\mathcal{X})\to \RGamma(\Bl_ZX,F_{\Bl_Z\mathcal{X}})
    $$
    is an isomorphism.
\end{theorem}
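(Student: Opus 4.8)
The plan is to realise the map in the statement as $\RGamma(X,-)$ applied to the unit
\[
  u\colon F_{\mathcal{X}}\longrightarrow \mathrm{R}\pi_{*}F_{\Bl_{Z}\mathcal{X}}
\]
of Nisnevich sheaves on $X$, where $\pi\colon \Bl_{Z}X\to X$ is the blow-up, and to prove that $u$ is an isomorphism. Since $\pi$ is an isomorphism over $X\setminus Z$, this is a question about the stalks of $u$ at points of $Z$, hence Nisnevich-local on $X$; by the simple normal crossings hypotheses I may therefore pass to the standard local model $X=\mathbb{A}^{n}$, $D_{X}=\sum_{i} m_{i}\{x_{i}=0\}$, with $Z=\{x_{1}=\dots=x_{s}=0\}$ a coordinate subspace. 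Writing $X=\mathbb{A}^{s}\times\mathbb{A}^{n-s}$ with $Z=\{0\}\times\mathbb{A}^{n-s}$, the blow-up alters only the first factor and the components of $D_{X}$ transverse to $Z$ are pulled back from the second. Collecting these transverse components into a modulus pair $\mathcal{X}''$ (a product of factors $(\mathbb{A}^{1},E_{i})$), the affine vanishing property lets me push forward along the projection to $\mathbb{A}^{s}$ without higher cohomology, reducing the problem — for the modified sheaf $\mathcal{Y}\mapsto F(\mathcal{Y}\otimes\mathcal{X}'')$, which still satisfies the three hypotheses — to the blow-up of the origin $\pi\colon \Bl_{0}\mathbb{A}^{s}\to\mathbb{A}^{s}$ with $D=\sum_{i} m_{i}\{x_{i}=0\}$.

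In this situation the affine vanishing property already does a great deal of work. On the one hand it gives $\RGamma(\mathbb{A}^{s},F_{\mathcal{X}})=F(\mathcal{X})$, concentrated in degree $0$. On the other hand each standard affine chart $V_{j}\cong\mathbb{A}^{s}$ of $\Bl_{0}\mathbb{A}^{s}$ carries the divisor $M\{u_{j}=0\}+\sum_{i\ne j}m_{i}\{u_{i}=0\}$ with $M=\sum_{i}m_{i}$, again a standard model, so $\RGamma(V_{j},F)$ is concentrated in degree $0$ as well; and the $\mathbb{A}^{1}$-bundle structure $p\colon\Bl_{0}\mathbb{A}^{s}=\mathrm{Tot}(\mathcal{O}_{\mathbb{P}^{s-1}}(-1))\to\mathbb{P}^{s-1}$, whose zero section is the exceptional divisor $E$ (appearing with multiplicity $M$), presents $\Bl_{0}\mathbb{A}^{s}$ Zariski-locally as $(\mathbb{A}^{1},M\{t=0\})\times(\text{base})$, so that affine vanishing yields $\mathrm{R}p_{*}F=p_{*}F$ in degree $0$. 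Thus $\RGamma(\Bl_{0}\mathbb{A}^{s},F_{\Bl_{0}\mathcal{X}})=\RGamma(\mathbb{P}^{s-1},p_{*}F)$, which I compute by \v{C}ech theory over the standard cover of $\mathbb{P}^{s-1}$: the chart terms are the degree-$0$ groups just described, while the overlap terms, which acquire a $\mathbb{G}_{m}$-direction, are evaluated using cohomological cube invariance.

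The crux is to show that this \v{C}ech complex collapses onto $F(\mathcal{X})$, and this is where left continuity and the $\mathbb{Q}$-structure are indispensable. I first use left continuity to replace $\mathcal{X}$ by the genuine $\mathbb{Q}$-modulus pair $(X,(1-\varepsilon)D_{X})$, so that the exceptional multiplicity becomes the generic value $(1-\varepsilon)M$; for such $\varepsilon$ the boundary components meeting $E$ sit in the position required for the cube-invariance evaluation of the overlaps, the alternating sum telescopes, and the comparison map becomes an isomorphism. Passing to the colimit $\colim_{\varepsilon\to0}$ via left continuity then recovers the assertion for the integral divisor $D_{X}$, and with it the theorem. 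I expect the main obstacle to be precisely this collapse in the presence of the large exceptional multiplicity $M=\sum_{i}m_{i}$: it is exactly this jump that defeats the naive integral argument, as the counterexample of R\"ulling–Saito shows, and controlling it requires the full strength of the $\mathbb{Q}$-coefficients together with left continuity. Once this is in hand, everything else is formal from the affine vanishing property and cohomological cube invariance.
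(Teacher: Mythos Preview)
Your proposal has a genuine gap at what you call the ``crux''. The three hypotheses give you no direct handle on the \v{C}ech overlap terms: an intersection $V_j\cap V_k$ is, in chart coordinates, of the form $\mathbb{A}^{s-1}\times\mathbb{G}_m$ with divisor supported only in the $\mathbb{A}^{s-1}$ factor, and neither cohomological cube invariance (which concerns $\bcube=(\mathbb{P}^1,[\infty])$) nor affine vanishing (which concerns products of $(\mathbb{A}^1,E_i)$) tells you anything about $F$ on such a modulus pair. Your sentence ``the boundary components meeting $E$ sit in the position required for the cube-invariance evaluation of the overlaps, the alternating sum telescopes'' asserts precisely what must be proved, without any mechanism; and passing from $D_X$ to $(1-\varepsilon)D_X$ only rescales all multiplicities by the same factor, so it does not alter the structure of the \v{C}ech complex or make the $\mathbb{G}_m$-overlaps computable.

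The paper's argument is organised quite differently and never needs to evaluate $F$ over a bare $\mathbb{G}_m$. It first inducts on $r=\operatorname{codim}_X Z$, using an exchange-of-blow-ups lemma (blow up a codimension-$2$ locus $Z'\supset Z$ first, then the preimage of $Z$, and compare) to reduce $r\ge 3$ to the cases $r-1$ and $r=2$. The core is then $r=2$, i.e.\ the blow-up $H^{(1)}=\Bl_0\mathbb{A}^2\to\mathbb{A}^2$ (tensored with an arbitrary $\mathcal{X}\in\MCor^{\mathbb{Q},\ls}$). Here one introduces the degree-$N$ cover $\theta_N\colon H^{(N)}\to H^{(1)}$ coming from $s\mapsto s^N$ on $\mathbb{P}^1$, so that $\theta_{N,*}\theta_N^*=N$ on cohomology; an elementary numerical lemma produces an ambient morphism $\psi_{m,m'}\colon \mathcal{H}^{(N)}(Na,Nb,c)\to\mathcal{H}^{(0)}(1,0,c)$ for $c=a+b-\tfrac{a+b+1}{N+1}$, which is an isomorphism away from $\{0,\infty\}\subset\mathbb{P}^1$. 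Since $\mathcal{H}^{(0)}(1,0,c)\cong\bcube\otimes(\mathbb{A}^1,c[0])$, cube invariance and affine vanishing kill its higher cohomology, and a support argument on $\mathbb{P}^1$ transfers this to $H^{(N)}$, showing that $\mathrm{H}^i$ of $\mathcal{H}^{(1)}(a,b,c)$ is $N$-torsion. \emph{Only then} does left continuity enter: letting $N\to\infty$ along powers of $2$ and of $3$ sends $c\to a+b$, so the limit group is simultaneously $2$-primary and $3$-primary torsion, hence zero. Thus the role of $\mathbb{Q}$-divisors and left continuity is not to perturb into a position where a \v{C}ech complex collapses, but to let a finite-cover torsion bound be upgraded to actual vanishing in the limit.
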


Our proof of \Cref{main} is inspired by the proof of \cite[Theorem 2.12]{BRS}.
We reduce to the case of the blow-up of $\mathbb{A}^2$ at the origin, and then relate it to some other toric surfaces to deduce the required vanishing result from the cohomological cube invariance and the affine vanishing property.
Note that the counterexample to \Cref{KMSY_question} given by R\"ulling-Saito \cite[Section 6.9]{RSram3} shows that the left continuity assumption cannot be dropped.

We apply \Cref{main} to two sheaves.
Firstly, we extend the sheaf $\MOmega^q$ defined by Kelly-Miyazaki to $\mathbb{Q}$-modulus pairs over an arbitrary field, and show that it satisfies the assumption of the theorem.
This yields a short proof of the blow-up invariance of the Hodge cohomology with modulus due to Kelly-Miyazaki \cite[Theorem 6.1]{KellyMiyazaki_Hodge2}.
Secondly, assuming $\ch(k)=p>0$, we construct a sheaf $\MW_n$ on $\MCor^\mathbb{Q}$ which extends the sheaf of Witt vectors.
For $\mathcal{X}\in \MCor^\ls$, the group $\MW_n(\mathcal{X})$ is given by
$$
	\MW_n(\mathcal{X})=\Gamma\bigl(X,\mathrm{W}_n\mathcal{O}_X((D_X-|D_X|)/p^{n-1})\bigr).
$$
Here, $\mathrm{W}_n\mathcal{O}_X(D)$ denotes the \emph{Witt divisorial sheaf} introduced by Tanaka \cite{Tanaka_Witt}, which is defined by using the Brylinski-Kato filtration on the ring of Witt vectors.
We check that $\MW_n$ also satisfies the assumption of the theorem, and hence obtain the following result:
\begin{corollary}\label{Witt_CBI}
    Let $\mathcal{X}\in \MCor^{\mathbb{Q},\ls}$ and let $Z\subset |D_X|$ be a smooth closed subscheme which has simple normal crossings with $|D_X|$.
    Then the morphism
    $$
        \RGamma\bigl(X,(\MW_n)_\mathcal{X}\bigr)\to \RGamma\bigl(\Bl_ZX,(\MW_n)_{\Bl_Z\mathcal{X}}\bigr)
    $$
    is an isomorphism.
\end{corollary}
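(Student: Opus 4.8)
The plan is to deduce \Cref{Witt_CBI} from \Cref{main} by checking that $\MW_n$ satisfies its three hypotheses: cohomological cube invariance, the affine vanishing property, and left continuity. The key structural input is that the restriction of $\MW_n$ to integral modulus pairs recovers the Witt vector sheaf $\mathrm{W}_n\mathcal{O}$, a cube invariant sheaf satisfying semipurity and M-reciprocity whose associated filtration is the Brylinski--Kato filtration; indeed the formula $\MW_n(\mathcal{X})=\Gamma(X,\mathrm{W}_n\mathcal{O}_X((D_X-|D_X|)/p^{n-1}))$ is tailored to reproduce the value of the reciprocity sheaf $\mathrm{W}_n\mathcal{O}$ on $\mathcal{X}\in\MCor^\ls$. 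This makes \Cref{Saito_BRS} available for integral coefficients, and the remaining work is to propagate its conclusions across the rational divisors allowed in $\MCor^\mathbb{Q}$.

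First I would dispose of left continuity, the genuinely new hypothesis and the most transparent to verify. Writing $D_X=\sum_i m_i D_i$, the coefficient of $(D_X-|D_X|)/p^{n-1}$ along $D_i$ equals $(m_i-1)/p^{n-1}$, and passing to $(1-\varepsilon)D_X$ replaces it by $((1-\varepsilon)m_i-1)/p^{n-1}$ while keeping the support $|D_X|$ fixed for small $\varepsilon$. As $\varepsilon\to 0^+$ these coefficients increase to their values at $\varepsilon=0$, so the groups $\MW_n(X,(1-\varepsilon)D_X)$ form an increasing filtered system inside $\mathrm{W}_n K(X)$. Since Tanaka's Witt divisorial sheaf is defined by rounding the coefficients up in the Brylinski--Kato filtration, the relevant filtration levels $\lceil ((1-\varepsilon)m_i-1)/p^{j}\rceil$ stabilize to $\lceil (m_i-1)/p^{j}\rceil$ for $\varepsilon$ sufficiently small, which identifies $\colim_{\varepsilon\to 0}\MW_n(X,(1-\varepsilon)D_X)$ with $\MW_n(\mathcal{X})$. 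As this is a Nisnevich-local statement compatible with global sections, left continuity follows.

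The affine vanishing property is similarly cheap, precisely because $\MW_n$ has an explicit quasi-coherent description. After tensoring, the total space is $X\times(\mathbb{A}^1)^n$ and $\pr_1$ is an affine morphism, while $(\MW_n)_{\mathcal{X}\otimes(\mathbb{A}^1,E_1)\otimes\cdots\otimes(\mathbb{A}^1,E_n)}$ is a quasi-coherent Witt divisorial sheaf; hence its higher direct images along $\pr_1$ vanish. The only thing to confirm is that this sheaf is correctly identified through the tensor structure of $\MCor^\mathbb{Q}$, but the $E_j$ and the $\mathbb{A}^1$-directions enter integrally and the fractional part is again pulled back from $X$, so the identification is governed by the integral case \Cref{Saito_BRS}(2).

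The main obstacle is cohomological cube invariance for genuinely rational $D_X$. Here $\mathcal{X}\otimes\bcube=(X\times\mathbb{P}^1,\pr_1^*D_X+\pr_2^*[\infty])$, and since the $\infty$-section is reduced one computes $(\pr_1^*D_X+\pr_2^*[\infty])-|\pr_1^*D_X+\pr_2^*[\infty]|=\pr_1^*(D_X-|D_X|)$, so the $\mathbb{Q}$-divisor governing $(\MW_n)_{\mathcal{X}\otimes\bcube}$ is pulled back from $X$ and is transverse to the fibers of $\pr_1$. Morally the computation of $\mathrm{R}\pr_{1,*}$ then reduces to the $\mathbb{P}^1$-bundle identity $\mathrm{R}\pr_{1,*}\mathcal{O}_{X\times\mathbb{P}^1}=\mathcal{O}_X$ twisted by the pulled-back Witt divisorial sheaf, yielding $\mathrm{R}^0\pr_{1,*}=(\MW_n)_\mathcal{X}$ and $\mathrm{R}^{>0}\pr_{1,*}=0$, which is exactly cohomological cube invariance. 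Making this precise requires a cohomology-and-base-change analysis of the Witt divisorial sheaf of a pulled-back divisor on the $\mathbb{P}^1$-bundle; the delicate point is that the components $D_i\times\mathbb{P}^1$ restrict to entire fibers rather than to divisors, so the fiberwise behavior of the Brylinski--Kato filtration must be controlled directly rather than imported from the line-bundle case. I expect to handle the passage from integral to rational coefficients by combining the integral statement of \Cref{Saito_BRS}(1) with the left continuity established above. With all three hypotheses verified, \Cref{main} applies to $F=\MW_n$ and yields the asserted isomorphism.
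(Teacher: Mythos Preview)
Your overall plan---verify cohomological cube invariance, affine vanishing, and left continuity for $\MW_n$, then invoke \Cref{main}---is exactly the paper's strategy, and your arguments for left continuity (via left continuity of the ceiling in the explicit formula of \Cref{Witt_log_formula}) and affine vanishing (via quasi-coherence of the Witt divisorial sheaf) are essentially correct. Two minor points: the general formula for rational $D_X$ involves $\lceil D_X\rceil$ rather than $D_X$, and you do not need \Cref{Saito_BRS}(2) for affine vanishing---quasi-coherence alone does it.

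The gap is in cohomological cube invariance. Your route through \Cref{Saito_BRS}(1) does not go through: to apply it you must first know that $\MW_n|_{\MCor}$ is cube invariant and satisfies M-reciprocity, neither of which is established here (the ramification-filtration construction does not yield M-reciprocity for free, and even $\mathrm{H}^0$-level cube invariance for the Witt divisorial sheaf is not automatic). Worse, left continuity cannot bridge integral and rational coefficients as you suggest, since $(1-\varepsilon)D_X$ is still a $\mathbb{Q}$-divisor. The paper instead argues directly: by \Cref{Witt_log_formula} the statement becomes $\mathrm{R}\Gamma(X,\mathrm{W}_n\mathcal{O}_X(D))\simeq \mathrm{R}\Gamma(X\times\mathbb{P}^1,\mathrm{W}_n\mathcal{O}_{X\times\mathbb{P}^1}(\pr_1^*D))$ for an arbitrary $\mathbb{Q}$-divisor $D$, and this is proved by induction on $n$ using Tanaka's exact sequence
\[
0\to (\mathrm{F}_X^{n-1})_*\mathcal{O}_X(p^{n-1}D)\to \mathrm{W}_n\mathcal{O}_X(D)\to \mathrm{W}_{n-1}\mathcal{O}_X(D)\to 0,
\]
which reduces everything to the elementary case $n=1$. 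Your ``moral'' reduction to the $\mathbb{P}^1$-bundle identity was in fact the right instinct; the missing ingredient is precisely this exact sequence.
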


If we assume resolution of singularities, these results imply that these cohomology theories are representable in the category $\MDM^\eff$ of motives with modulus defined by Kahn-Miyazaki-Saito-Yamazaki \cite{KMSY3}:

\begin{corollary}\label{cor:realization}
	Assume that $k$ admits resolution of singularities.
	\begin{enumerate}
		\item	(\cite[Theorem 1.3]{KellyMiyazaki_Hodge2}) There is an object $\boldMOmega^q$ in $\MDM^\eff$ such that for any $\mathcal{X}\in \MCor^\ls$ we have an equivalence
				$$
					\map_{\MDM^\eff}\bigl(\mathrm{M}(\mathcal{X}),\boldMOmega^q\bigr) \simeq \mathrm{R}\Gamma(X,\MOmega^q_\mathcal{X}).
				$$
		\item	Suppose that $\ch(k)=p>0$.
				There is an object $\boldMW_n$ in $\MDM^\eff$ such that for any $\mathcal{X}\in \MCor^\ls$ we have an equivalence
				$$
					\map_{\MDM^\eff}\bigl(\mathrm{M}(\mathcal{X}),\boldMW_n\bigr) \simeq \mathrm{R}\Gamma\bigl(X,(\MW_n)_\mathcal{X}\bigr).
				$$
	\end{enumerate}
\end{corollary}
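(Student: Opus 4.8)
The plan is to realize each of the sheaves $F\in\{\MOmega^q,\MW_n\}$ as an explicit $\bcube$-local object of $\MDM^\eff$ and to identify its mapping spaces by Yoneda. Since $\MDM^\eff$ is built from integral modulus pairs, I would first restrict $F$ along $\MCor\hookrightarrow\MCor^{\mathbb{Q}}$ to obtain a Nisnevich sheaf with transfers on $\MCor$; the object $\boldMOmega^q$ (resp.\ $\boldMW_n$) will be the image of this restriction under the canonical functor from cube invariant Nisnevich sheaves to $\MDM^\eff$. Thus the whole content of the statement is the mapping-space formula, and the strategy is to verify that the restricted $F$ is local for every relation imposed in the localization defining $\MDM^\eff$.

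First I would record that the restriction of $F$ to $\MCor$ is a cube invariant Nisnevich sheaf with transfers satisfying semipurity (a) and M-reciprocity (b); for $\MOmega^q$ and $\MW_n$ this is established in the earlier sections where these sheaves are constructed. Granting this, Saito's \Cref{Saito_BRS}(1) shows that $F$ enjoys cohomological cube invariance, so that $F$, viewed as a complex concentrated in degree zero, is already $\bcube$-local in the derived $\infty$-category of Nisnevich sheaves with transfers. For $\mathcal{X}\in\MCor^\ls$ the Yoneda adjunction then yields
$$
	\map_{\MDM^\eff}\bigl(\mathrm{M}(\mathcal{X}),\mathbf{F}\bigr)\simeq\RGamma(X,F_\mathcal{X}),
$$
where $\mathbf{F}=\boldMOmega^q$ or $\boldMW_n$, because $\RGamma(X,F_\mathcal{X})$ computes Nisnevich cohomology by definition.

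The role of resolution of singularities and of the blow-up invariance is to guarantee that this assignment is well defined and local in $\MDM^\eff$, whose construction involves inverting admissible modifications along the divisor in addition to Nisnevich squares. Concretely, I would use resolution of singularities to present every object of $\MCor$ by objects of $\MCor^\ls$, decomposing the comparison into a sequence of blow-ups with smooth centers having simple normal crossings with the divisor; at each step the invariance
$$
	\RGamma(X,F_\mathcal{X})\xrightarrow{\ \sim\ }\RGamma(\Bl_ZX,F_{\Bl_Z\mathcal{X}})
$$
provided by \Cref{main} (resp.\ \Cref{Witt_CBI}) shows that $F$ does not see the modification. This is exactly the descent needed for $\mathbf{F}$ to be local with respect to the finer localization and for the formula above to persist on all of $\MCor$.

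The main obstacle is the bookkeeping that reduces an arbitrary modification occurring in the localization to the specific blow-ups with snc centers handled by \Cref{main}: one must factor a given resolution into admissible blow-ups, verify that the $\mathbb{Q}$-divisor structure on $\Bl_Z\mathcal{X}$ stays within the hypotheses at every stage, and confirm that the left continuity used in the proof of \Cref{main} is preserved under the restriction from $\MCor^{\mathbb{Q}}$ to $\MCor$. Once this compatibility is in place, full faithfulness of the embedding of cube invariant sheaves and the identification of $\mathbf{F}$ follow formally.
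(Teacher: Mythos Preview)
Your overall architecture is close to the paper's, but two of your steps are mislocated and one of your inputs is not actually supplied by the earlier sections.

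First, the Yoneda step. You write that for $\mathcal{X}\in\MCor^\ls$ the adjunction already gives $\map_{\MDM^\eff}\bigl(\mathrm{M}(\mathcal{X}),\mathbf{F}\bigr)\simeq\RGamma(X,F_\mathcal{X})$ ``because $\RGamma(X,F_\mathcal{X})$ computes Nisnevich cohomology by definition.'' This is where the argument breaks. In $\Sh_\Nis(\MCor)$ the representable $\mathbb{Z}_\tr(\mathcal{X})$ is \emph{not} the sheaf whose Ext groups are $\mathrm{H}^i(X,F_\mathcal{X})$: by \cite[Theorem~4.6.3]{KMSY1} one has
\[
\Ext^i_{\Sh_\Nis(\MCor)}\bigl(\mathbb{Z}_\tr(\mathcal{X}),F\bigr)\;\simeq\;\colim_{\mathcal{Y}\to\mathcal{X}}\mathrm{H}^i(Y,F_\mathcal{Y}),
\]
the colimit running over proper minimal ambient morphisms with $Y^\circ\xrightarrow{\sim}X^\circ$. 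It is \emph{this} colimit that blow-up invariance and (RS2) collapse to $\mathrm{H}^i(X,F_\mathcal{X})$: every such $\mathcal{Y}\to\mathcal{X}$ is dominated by a composite of blow-ups in smooth centers with snc against $|D_X|$, and \Cref{main} (resp.\ \Cref{Witt_CBI}) shows each blow-up induces an isomorphism on $\RGamma$. So blow-up invariance is used to compute $\Ext$ in the sheaf category, not to verify locality for an additional ``modification localization''; the only localization in the definition of $\MDM^\eff$ is $(\mathrm{CI})$. Once $\Ext$ is identified with $\RGamma$, cube-locality (using (RS1) to reduce to $\mathcal{Y}\in\MCor^\ls$ and then cohomological cube invariance) yields $\map_{\MDM^\eff}=\Ext_{\Sh_\Nis(\MCor)}$. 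This is exactly the content of \Cref{general_realization} in the paper.

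Second, the appeal to Saito's theorem. You propose to deduce cohomological cube invariance from cube invariance together with semipurity and M-reciprocity via \Cref{Saito_BRS}(1). The paper does not do this, and for good reason: M-reciprocity for $\MOmega^q$ and $\MW_n$ is never established in the earlier sections (nor is it obvious). Instead, cohomological cube invariance is proved directly from the explicit quasi-coherent descriptions (\Cref{omega_log_formula}, \Cref{Witt_log_formula}): for $\MOmega^q$ one computes $(\MOmega^q)_{\mathcal{X}\otimes\bcube}$ as $\pr_1^*(\MOmega^q)_\mathcal{X}\oplus\bigl(\pr_1^*(\MOmega^{q-1})_\mathcal{X}\otimes\pr_2^*\mathcal{O}_{\mathbb{P}^1}(-1)\bigr)$ and uses $\mathrm{R}\pr_{1,*}\mathcal{O}_{\mathbb{P}^1}(-1)=0$; for $\MW_n$ one inducts on $n$ via Tanaka's short exact sequence for $\mathrm{W}_n\mathcal{O}_X(D)$. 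Your route through \Cref{Saito_BRS} would require supplying a proof of M-reciprocity that the paper does not contain.
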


\subsection*{Acknowledgments}
The author would like to thank Shane Kelly, Hiroyasu Miyazaki, and Shuji Saito for helpful comments on a draft of this paper.
The author is supported by JSPS KAKENHI Grant (22J20698).

\subsection*{Conventions}
Throughout this paper, we fix a perfect field $k$.
An \emph{algebraic $k$-scheme} is a separated $k$-scheme of finite type.
For an algebraic $k$-scheme $X$, we write $X^N$ for its normalization.
We write $\Sm$ for the category of smooth algebraic $k$-schemes.
We say that a morphism of schemes $f\colon X\to Y$ is \emph{pseudo-dominant} if it sends generic points of $X$ to generic points of $Y$.
If $X$ is a smooth algebraic $k$-scheme, then a \emph{coordinate} of $X$ is a family of functions $x_1,\dots,x_d\in \Gamma(X,\mathcal{O}_X)$ which defines an \'etale morphism $X\to \mathbb{A}^d$.
Unless otherwise specified, cohomologies are taken in the Nisnevich topology.
For a field $K$, we write $K\{t_1,\dots,t_d\}$ for the henselization of $K[t_1,\dots,t_d]$ at $(t_1,\dots,t_d)$.

\section{Preliminaries}

In this section, we recall several definitions and results concerning modulus pairs.

\subsection{Finite correspondences}

First we recall Suslin-Voevodsky's category of finite correspondences $\Cor$.
It is an additive category having the same objects as $\Sm$, and its group of morphisms $\Cor(X,Y)$ is the group of algebraic cycles on $X\times Y$ whose components are finite pseudo-dominant over $X$.
The category $\Cor$ has a symmetric monoidal structure given by the fiber product of $k$-schemes.
For any morphism $f\colon X\to Y$ in $\Sm$, the graph of $f$ gives a morphism $f\colon X\to Y$ in $\Cor$.
If $f$ is finite pseudo-dominant, then the transpose of the graph of $f$ gives a morphism ${}^tf\colon Y\to X$ in $\Cor$.

We say that a presheaf $F\colon \Cor^\op\to \Ab$ is a \emph{Nisnevich sheaf} if for any $X\in \Sm$, the presheaf
$$
    F_X\colon X_\et^\op\to \Ab;\quad U\mapsto F(U)
$$
is a Nisnevich sheaf.
We write $\Sh_\Nis(\Cor)$ for the category of Nisnevich sheaves on $\Cor$.
If $f\colon X\to Y$ is a finite pseudo-dominant morphism in $\Sm$ and $F\in \Sh_\Nis(\Cor)$, then we write $f_*$ or $\Tr_{X/Y}$ for the map $({}^tf)^*\colon F(X)\to F(Y)$.

\subsection{Modulus pairs}

Next we recall basic facts about modulus pairs \cite{KMSY1}.
Since we want to allow $\mathbb{Q}$-divisors, we follow \cite{Koizumi-Miyazaki}.
We fix $\Lambda\in \{\mathbb{Z},\mathbb{Q}\}$.

Let $X$ be a notherian scheme.
A \emph{$\Lambda$-Cartier divisor} on $X$ is an element of $\Lambda\otimes_\mathbb{Z}\CDiv(X)$ where $\CDiv(X)$ is the group of Cartier divisor on $X$.
Note that a $\mathbb{Z}$-Cartier divisor is a usual Cartier divisor.
A $\Lambda$-Cartier divisor $D$ is called \emph{effective} if it is contained in the $\Lambda_{>0}$-span of some effective Cartier divisor. 
For two $\Lambda$-Cartier divisors $D_1$ and $D_2$, we write $D_1\geq D_2$ if $D_1-D_2$ is effective.
The support $|D|$ if $D$ is defined to be the support of any $E\in \CDiv(X)$ whose $\Lambda_{>0}$-span contains $D$.

A $\Lambda$-modulus pair is a pair $\mathcal{X}=(X,D_X)$ where $X$ is an algebraic $k$-scheme and $D_X$ is an effective $\Lambda$-Cartier divisor on $X$ such that $X^\circ:=X-|D_X|$ is smooth.
For example, the $\Lambda$-modulus pair $\bcube:=(\mathbb{P}^1,[\infty])$ is called the \emph{cube}.
An \emph{ambient morphism} of $\Lambda$-modulus pairs $f\colon \mathcal{X}\to \mathcal{Y}$ is a morphism of $k$-schemes $f\colon X\to Y$ such that $f(X^\circ)\subset Y^\circ$ and $D_X\geq f^*D_Y$ hold.
It is called \emph{minimal} if $D_X=f^*D_Y$ holds.
We write $f^\circ\colon X^\circ\to Y^\circ$ for the induced morphism of $k$-schemes.

Let $\mathcal{X},\mathcal{Y}$ be $\Lambda$-modulus pairs over $k$ and let $V\subset X^\circ\times Y^\circ$ be an integral closed subscheme.
We say that $V$ is \emph{left proper} if the closure $\overline{V}$ of $V$ in $X\times Y$ is proper over $X$.
We say that $V$ is \emph{admissible} if $(\pr_1^*D_X)|_{\overline{V}^N}\geq (\pr_2^*D_Y)|_{\overline{V}^N}$ holds.
We write $\MCor^\Lambda(\mathcal{X},\mathcal{Y})$ for the subgroup of $\Cor(X^\circ,Y^\circ)$ consisting of cycles whose components are left proper and admissible.
This defines a category $\MCor^\Lambda$ of $\Lambda$-modulus pairs.
For any ambient morphism $f\colon \mathcal{X}\to \mathcal{Y}$, the graph of $f^\circ$ gives a morphism $f\colon \mathcal{X}\to \mathcal{Y}$ in $\MCor^\Lambda$.
If $f$ is a proper minimal ambient morphism such that $f^\circ$ is finite pseudo-dominant, then the transpose of the graph of $f^\circ$ gives a morphism ${}^tf\colon \mathcal{Y}\to \mathcal{X}$ in $\MCor^\Lambda$.
The category $\MCor^\Lambda$ has a symmetric monoidal structure $\otimes$ given by
$$
	\mathcal{X}\otimes \mathcal{Y}=(X\times Y, \pr_1^*D_X+\pr_2^*D_Y).
$$
We can regard $\MCor:=\MCor^\mathbb{Z}$ as a full subcategory of $\MCor^\mathbb{Q}$.
We write $\MCor^{\Lambda,\ls}$ for the full subcategory of $\MCor^\Lambda$ spanned by \emph{log-smooth $\Lambda$-modulus pairs}, i.e., modulus pairs $(X,D_X)$ such that $X$ is smooth and $|D_X|$ has simple normal crossings.

\begin{definition}\label{}
    Let $\mathcal{X}\in \MCor^{\Lambda,\ls}$.
    We say that a smooth closed subscheme $Z\subset |D_X|$ has \emph{simple normal crossings with $|D_X|$} if Zariski locally on $X$, there is a coordinate $x_1,\dots,x_d$ such that we can write $|D_X|=\{x_1x_2\cdots x_s=0\}$, $Z=\{x_{i_1}=\cdots=x_{i_r}=0\}$.
\end{definition}

Let $\mathcal{X}\in \MCor^{\Lambda,\ls}$ and let $Z\subset |D_X|$ be a smooth closed subscheme which has simple normal crossings with $|D_X|$.
Let $\varphi\colon \Bl_ZX\to X$ denote the blow-up along $Z$.
In this situation we write $\Bl_Z\mathcal{X}=(\Bl_ZX,\varphi^*D_X)$.
Note that the ambient morphism $\varphi\colon \Bl_Z\mathcal{X}\to \mathcal{X}$ is an isomorphism in $\MCor^{\Lambda}$ because ${}^t\varphi$ gives an inverse.

We say that a presheaf $F\colon (\MCor^\Lambda)^\op\to \Ab$ is a \emph{Nisnevich sheaf} if for any $\mathcal{X}\in \MCor^\Lambda$, the presheaf
$$
    F_\mathcal{X}\colon X_\et^\op\to \Ab;\quad U\mapsto F(U,D_X|_U)
$$
is a Nisnevich sheaf.
We write $\Sh_\Nis(\MCor^\Lambda)$ for the category of Nisnevich sheaves on $\MCor^\Lambda$.

\begin{definition}
    Let $F$ be a Nisnevich sheaf on $\MCor^\Lambda$.
    \begin{enumerate}
        \item We say that $F$ has \emph{cohomological cube invariance} if for any $\mathcal{X}\in \MCor^{\Lambda,\ls}$, the morphism $\RGamma(X,F_\mathcal{X})\to \RGamma(X\times\mathbb{P}^1,F_{\mathcal{X}\otimes \bcube})$ is an isomorphism.
        \item We say that $F$ has \emph{affine vanishing property} if for any effective $\Lambda$-Cartier divisors $E_1,\dots,E_n$ on $\mathbb{A}^1$ and $\mathcal{X}\in \MCor^{\Lambda,\ls}$, we have $\mathrm{R}^i\pr_{1,*}F_{\mathcal{X}\otimes(\mathbb{A}^1,E_1)\otimes\cdots\otimes(\mathbb{A}^1,E_n)}=0$ for $i>0$.
        \item Suppose that $\Lambda=\mathbb{Q}$.
        We say that $F$ has \emph{left continuity} if the canonical map
    $\colim_{\varepsilon \to 0} F\bigl(X,(1-\varepsilon)D_X\bigr)\to F(\mathcal{X})$
    is an isomorphism for any $\mathcal{X}\in \MCor^{\mathbb{Q},\ls}$.
    \end{enumerate}
\end{definition}

\subsection{Ramification filtrations}
In this subsection we present a method to construct Nisnevich sheaves on $\MCor^\mathbb{Q}$ from \emph{ramification filtrations}.
This is essentially due to R\"ulling-Saito \cite{RS21}, but we make a small modification to allow $\mathbb{Q}$-divisors.

A \emph{geometric henselian DVF} is a discrete valuation field $(L,v_L)$ which is isomorphic to $\Frac \mathcal{O}^h_{X,x}$ for some $X\in \Sm$ and some point $x\in X$ of codimension $1$.
We write $\Phi$ for the class of all geometric henselian DVFs.
For $L\in \Phi$, we write $\mathcal{O}_L$ for the valuation ring and $\mathfrak{m}_L$ for the maximal ideal of $\mathcal{O}_L$.
Note that if $L\in \Phi$ and $K$ is the residue field of $\mathcal{O}_L$, then we have $\mathcal{O}_L\simeq K\{t\}$ for any uniformizer $t\in \mathcal{O}_L$.

\begin{definition}\label{def:ram_fil}
	Let $F\in \Sh_\Nis(\Cor)$.
	A \emph{ramification filtration} $\Fil$ on $F$ is a collection of increasing filtrations $\{\Fil_r F(L)\}_{r\in \mathbb{Q}_{\geq 0}}$ on $F(L)$ indexed by $L\in \Phi$ which satisfies the following:
	\begin{enumerate}
		\item	For any $L\in \Phi$, we have $\Im\bigl(F(\mathcal{O}_L)\to F(L)\bigr)\subset \Fil_0F(L)$.
		\item	If $L\in \Phi$ and $L'/L$ is a finite extension with ramification index $e$, then we have
				$$
					\Tr_{L'/L}\bigl(\Fil_r F(L')\bigr)\subset \Fil_{r/e}F(L)\quad (r\in \mathbb{Q}_{\geq 0}).
				$$
	\end{enumerate}
\end{definition}

\begin{definition}
	Let $F\in \Sh_\Nis(\Cor)$ and let $\Fil$ be a ramification filtration on $F$.
	Let $\mathcal{X}\in \MCor$ and $a\in F(X^\circ)$.
	We say that $a$ is \emph{bounded by $D_X$} if for any $L\in \Phi$ and any commutative diagram of the following form, we have $\rho^*a\in \Fil_{v_L(\widetilde{\rho}^*D_X)}F(L)$:
        \begin{align}\label{DVR_diagram}
            \xymatrix{
                \Spec L\ar[r]^-{\rho}\ar@{^(->}[d]             &X^\circ\ar@{^(->}[d]\\
                \Spec \mathcal{O}_L\ar[r]^-{\widetilde{\rho}}       &X.
            }
        \end{align}
	We write $F_{\Fil}(\mathcal{X})$ for the subgroup of $F(X^\circ)$ consisting of elements bounded by $D_X$.
\end{definition}

\begin{lemma}[cf.\,{\cite[Proposition 4.7]{RS21}}]\label{lem:ram_fil}
	Let $F\in \Sh_\Nis(\Cor)$ and let $\Fil$ be a ramification filtration on $F$.
	\begin{enumerate}
		\item	For any $\mathcal{X},\mathcal{Y}\in \MCor^{\mathbb{Q}}$ and $\alpha\in \MCor^{\mathbb{Q}}(\mathcal{X},\mathcal{Y})$, we have $\alpha^*F_{\Fil} (\mathcal{Y})\subset F_{\Fil} (\mathcal{X})$.
		\item	$F_{\Fil}$ is a Nisnevich sheaf on $\MCor^\mathbb{Q}$.
	\end{enumerate}
\end{lemma}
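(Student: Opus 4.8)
For (1), by additivity of $\alpha\mapsto\alpha^*$ and because $F_{\Fil}(\mathcal X)$ is a subgroup of $F(X^\circ)$, I may assume $\alpha=[V]$ for an integral, left proper, admissible $V\subset X^\circ\times Y^\circ$. Fix $a\in F_{\Fil}(\mathcal Y)$ and a diagram \eqref{DVR_diagram} with data $\rho,\widetilde\rho$; the goal is $\rho^*(\alpha^*a)\in\Fil_{v_L(\widetilde\rho^*D_X)}F(L)$. Write $p,q$ for the projections $\overline V^N\to X$ and $\overline V^N\to Y$; here $p$ is proper since $V$ is left proper. The finite $L$-scheme $V\times_{X^\circ,\rho}\Spec L$ has finitely many points, whose residue fields $L'_j$ are finite extensions of $L$ and hence again lie in $\Phi$. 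Choosing for each $j$ the extension of $v_L$ to $L'_j$ (unique, as $\mathcal O_L$ is henselian), with valuation ring $\mathcal O_{L'_j}$ and ramification index $e_j$, the valuative criterion of properness for $p$ lifts the point $\Spec L'_j\to\overline V^N$ to a section $\tau_j\colon\Spec\mathcal O_{L'_j}\to\overline V^N$ over $\widetilde\rho$. Writing $\widetilde\sigma_j=q\circ\tau_j$ and $\sigma_j$ for its generic fibre (which lands in $Y^\circ$), the standard description of a finite correspondence precomposed with a field-valued point gives $\rho^*(\alpha^*a)=\sum_j\Tr_{L'_j/L}(\sigma_j^*a)$.

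It remains to bound each term. Pulling the admissibility inequality $(p^*D_X)|_{\overline V^N}\ge(q^*D_Y)|_{\overline V^N}$ back along $\tau_j$, and using that $p\circ\tau_j$ equals $\widetilde\rho$ composed with the ramified extension $\Spec\mathcal O_{L'_j}\to\Spec\mathcal O_L$, yields
$$ v_{L'_j}(\widetilde\sigma_j^*D_Y)\le v_{L'_j}(\tau_j^*p^*D_X)=e_j\,v_L(\widetilde\rho^*D_X). $$
Since $a$ is bounded by $D_Y$ and the filtration is increasing, $\sigma_j^*a\in\Fil_{v_{L'_j}(\widetilde\sigma_j^*D_Y)}F(L'_j)\subset\Fil_{e_jv_L(\widetilde\rho^*D_X)}F(L'_j)$. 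The trace axiom (2) of \Cref{def:ram_fil}, applied with $r=e_jv_L(\widetilde\rho^*D_X)$, then gives $\Tr_{L'_j/L}(\sigma_j^*a)\in\Fil_{v_L(\widetilde\rho^*D_X)}F(L)$, and summing over $j$ proves the claim. I expect the main difficulty to be the identification $\rho^*(\alpha^*a)=\sum_j\Tr_{L'_j/L}(\sigma_j^*a)$ together with the verification that the $L'_j$ lie in $\Phi$ and that the valuative criterion yields the stated compatibility of valuations and ramification indices; this is where I follow \cite[Proposition 4.7]{RS21}.

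For (2), part (1) makes $U\mapsto F_{\Fil}(U,D_X|_U)$ a subpresheaf of $U\mapsto F(U^\circ)$ on $X_\et$, and the latter is a Nisnevich sheaf because $U\mapsto U^\circ$ carries Nisnevich covers in $X_\et$ to Nisnevich covers in $(X^\circ)_\et$ and $F$ is a Nisnevich sheaf on $\Cor$. A subpresheaf of a sheaf is again a sheaf once its defining condition is local, so it suffices to show: if $\{U_i\to U\}$ is a Nisnevich cover and $a\in F(U^\circ)$ has each restriction $a|_{U_i^\circ}$ bounded by $D_X|_{U_i}$, then $a$ is bounded by $D_X|_U$. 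Given a diagram \eqref{DVR_diagram} over $U$, the image of the closed point of $\Spec\mathcal O_L$ admits a lift to some $U_i$ with trivial residue field extension, so the henselianness of $\mathcal O_L$ lets us lift $\widetilde\rho$ to $\widetilde\rho_i\colon\Spec\mathcal O_L\to U_i$, with generic fibre $\rho_i$. Since $U_i\to U$ is étale and minimal, $\widetilde\rho_i^*D_X=\widetilde\rho^*D_X$ and $\rho^*a=\rho_i^*(a|_{U_i^\circ})$, whence boundedness of $a|_{U_i^\circ}$ gives $\rho^*a\in\Fil_{v_L(\widetilde\rho^*D_X)}F(L)$. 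As the diagram was arbitrary, $a$ is bounded by $D_X|_U$, completing the proof.
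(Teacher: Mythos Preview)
Your argument is essentially identical to the paper's: for (1) you reduce to an integral $V$, decompose $\rho^*\alpha^*a$ as a sum of traces from the finite extensions $L'_j/L$ arising from the fiber of $V$ over $\Spec L$, pull back the admissibility inequality along the extension $\tau_j$ produced by the valuative criterion, and apply the trace axiom; for (2) you lift $\widetilde\rho$ through the Nisnevich cover using henselianness of $\mathcal O_L$, exactly as the paper does. The only slip is that your decomposition should carry multiplicities, $\rho^*(\alpha^*a)=\sum_j n_j\,\Tr_{L'_j/L}(\sigma_j^*a)$ with $n_j$ the length of the local ring of $V\times_{X^\circ}\Spec L$ at the $j$-th point (the paper writes $\rho^*[V]=\sum_i n_i[x_i]$), but since each $\Fil_r F(L)$ is a subgroup this does not affect the argument.
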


\begin{proof}
    \begin{enumerate}
        \item We may assume that $\alpha = [V]$ for some integral closed subscheme $V\subset X^\circ\times Y^\circ$.
        Let $b\in F_{\Fil}(\mathcal{Y})$ and $L\in \Phi$.
        Suppose that we have a commutative diagram of the form \Cref{DVR_diagram}.
        It suffices to show that $\rho^*\alpha^*b\in \Fil_{v_L(\widetilde{\rho}^*D_X)}F(L)$.

        Let us consider the composition $\Spec L\to X^\circ\to Y^\circ$.
        We can write $\rho^*[V]=\sum_{i=1}^m n_i[x_i]$ as a relative $0$-cycle on $Y^\circ\times_X\Spec L$ over $\Spec L$.
        Since $L_i:=k(x_i)$ is finite over $L$, we have $L_i\in \Phi$.
        Write $\eta_i\colon \Spec L_i\to Y$ and $\pi_i\colon \Spec L_i\to \Spec L$ for the canonical morphisms.
        Then the composition $\Spec L\to X^\circ\to Y^\circ$ is equal to $\sum_{i=1}^m n_i(\eta_i\circ {}^t\pi_i)$, so we have $\rho^*\alpha^*b=\sum_{i=1}^mn_i(\pi_{i,*}\eta_i^*b)$.
        Therefore it suffices to show that $\pi_{i,*}\eta_i^*b$ is contained in $\Fil_{v_L(\widetilde{\rho}^*D_X)}F(L)$.

        Since $\overline{V}^N$ is proper over $X$, the canonical morphism $\Spec L_i\to V^N$ uniquely extends to $\Spec \mathcal{O}_{L_i} \to \overline{V}^N$.
        We write $\widetilde{\pi}_i \colon \Spec \mathcal{O}_{L_i}\to \Spec \mathcal{O}_L$ and $\widetilde{\eta}_i\colon \Spec \mathcal{O}_{L_i}\to Y$ for the induced morphisms.
        Pulling back the inequality $(\pr_1^*D_X)|_{\overline{V}^N}\geq (\pr_2^*D_Y)|_{\overline{V}^N}$ to $\Spec \mathcal{O}_{L_i}$, we get $\widetilde{\pi}_i^*\widetilde{\rho}^*D_X\geq \widetilde{\eta}_i^*D_Y$.
        Consider the following commutative diagram:
        $$
        \xymatrix{
            \Spec L_i\ar[r]^-{\eta_i}\ar@{^(->}[d]             &Y^\circ\ar@{^(->}[d]\\
            \Spec \mathcal{O}_{L_i}\ar[r]^-{\widetilde{\eta}_i}       &Y.
        }
        $$
        Since $b\in F_{\Fil}(\mathcal{Y})$, we have $\eta_i^*b\in \Fil_{v_{L_i}(\widetilde{\eta}_i^*D_Y)}F(L_i)$.
        The above inequality shows that $\eta_i^*b\in \Fil_{v_{L_i}(\widetilde{\pi}_i^*\widetilde{\rho}^*D_X)}F(L_i)$.
        If $L_i/L$ has ramification index $e_i$, then we have $v_{L_i}(\pi_i^*\widetilde{\rho}^*D_X)=e_i\cdot v_L(\widetilde{\rho}^*D_X)$.
        By the definition of a ramification filtration, we get $\pi_{i,*}\eta_i^*b\in \Fil_{v_L(\widetilde{\rho}^*D_X)}F(L)$, as desired.
        \item Since $F_{\Fil}\subset F$, it is clear that $F_{\Fil}$ is separated for the Nisnevich topology.
        Let $\mathcal{X}\in \MCor^{\mathbb{Q}}$ and let $\{\pi_i\colon U_i\to X\}_{i\in I}$ be a Nisnevich covering of $X$.
        It suffices to show that if $a\in F(X^\circ)$ satisfies $a|_{U_i}\in F_{\Fil}(U_i,D_X|_{U_i})$ for all $i\in I$, then $a\in F_{\Fil}(\mathcal{X})$.
        Suppose that we have a commutative diagram of the form \Cref{DVR_diagram}.
        Since $L$ is henselian, $\widetilde{\rho}$ lifts to $\widetilde{\rho}'\colon \Spec \mathcal{O}_L\to U_i$ for some $i$.
        By $a|_{U_i}\in F_{\Fil}(U_i,D_X|_{U_i})$, we get $\rho^*a\in \Fil_{v_L(\widetilde{\rho}^*D_X)}F(L)$, as desired.\qedhere
    \end{enumerate}
\end{proof}

\section{Blow-up invariance}\label{section_main}

In this section we prove \Cref{main}.
First we prepare some lemmas.

\begin{lemma}\label{P1_vanishing_lemma}
	Let $A$ be a henselian local ring.
	Let $\varphi\colon F\to G$ be a morphism of Nisnevich sheaves on $\mathbb{P}^1_A$ such that $\Ker \varphi$ and $\Coker \varphi$ are supported on $\{0,\infty\}\subset\mathbb{P}^1_A$.
	Then the morphism
	$$
		\mathrm{H}^i(\mathbb{P}^1_A, F)\to \mathrm{H}^i(\mathbb{P}^1_A, G)
	$$
	is surjective for $i>0$.
\end{lemma}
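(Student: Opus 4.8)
The plan is to deduce the surjectivity from the vanishing of the higher Nisnevich cohomology of sheaves supported on the two sections $\{0,\infty\}$. Write $K=\Ker\varphi$ and $C=\Coker\varphi$, and let $I=\Im\varphi\subset G$ denote the image sheaf, so that we have short exact sequences of Nisnevich sheaves $0\to K\to F\to I\to 0$ and $0\to I\to G\to C\to 0$. The map in question factors as $\mathrm{H}^i(\mathbb{P}^1_A,F)\to \mathrm{H}^i(\mathbb{P}^1_A,I)\to \mathrm{H}^i(\mathbb{P}^1_A,G)$, and the two associated long exact sequences show that the first arrow is surjective once $\mathrm{H}^{i+1}(\mathbb{P}^1_A,K)=0$, while the second arrow is surjective once $\mathrm{H}^i(\mathbb{P}^1_A,C)=0$. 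Hence it suffices to prove the following claim: any Nisnevich sheaf $S$ on $\mathbb{P}^1_A$ whose support is contained in $\{0,\infty\}$ satisfies $\mathrm{H}^i(\mathbb{P}^1_A,S)=0$ for all $i>0$.

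To prove this claim I would use the open--closed decomposition. Let $\iota\colon Z\hookrightarrow \mathbb{P}^1_A$ denote the closed immersion of $Z=\{0,\infty\}$ and $j\colon \mathbb{G}_{m,A}\hookrightarrow \mathbb{P}^1_A$ its open complement. The canonical exact sequence $0\to j_!j^*S\to S\to \iota_*\iota^*S\to 0$, together with $j^*S=0$ (the stalks of $S$ vanish over the complement of $Z$), gives an isomorphism $S\simeq \iota_*\iota^*S$. Since $\iota$ is a closed immersion, its higher direct images for the Nisnevich topology vanish, so the Leray spectral sequence yields $\mathrm{H}^i(\mathbb{P}^1_A,S)\simeq \mathrm{H}^i(Z,\iota^*S)$. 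Finally $Z=\{0\}\sqcup\{\infty\}$ is a disjoint union of two copies of $\Spec A$, and since $A$ is henselian local the scheme $\Spec A$ is a point for the Nisnevich topology, whence $\mathrm{H}^i(Z,\iota^*S)=0$ for $i>0$.

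Combining the claim with the two long exact sequences finishes the argument: for $i>0$ we have $\mathrm{H}^{i+1}(\mathbb{P}^1_A,K)=0$ and $\mathrm{H}^i(\mathbb{P}^1_A,C)=0$, so both factors of $\mathrm{H}^i(\mathbb{P}^1_A,F)\to \mathrm{H}^i(\mathbb{P}^1_A,G)$ are surjective, and hence so is the composite. The only genuinely delicate point is the claim, and within it the identification $S\simeq \iota_*\iota^*S$ together with the vanishing of the higher direct images of a closed immersion in the Nisnevich topology; everything else is a formal diagram chase. I expect both to follow from the standard functoriality of the small Nisnevich site, using that a henselian local ring has Nisnevich cohomological dimension $0$.
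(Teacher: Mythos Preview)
Your proof is correct and follows essentially the same route as the paper: factor through the image, use the two long exact sequences, and invoke the vanishing of higher Nisnevich cohomology for sheaves supported on $\{0,\infty\}$. The only difference is that you spell out this last vanishing (via $S\simeq \iota_*\iota^*S$, exactness of $\iota_*$, and the fact that $\Spec A$ is a Nisnevich point), whereas the paper simply asserts it.
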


\begin{proof}
	We have two exact sequences of Nisnevich sheaves on $\mathbb{P}^1_A$:
	\begin{align*}
		&0\to \Ker f\to F\xrightarrow{\varphi} \Im f\to 0,\\
		&0\to \Im f\to G\to \Coker f\to 0.
	\end{align*}
	Taking the cohomologies, we get exact sequences
	\begin{align*}
		&\mathrm{H}^i(\mathbb{P}^1_A,F)\xrightarrow{\varphi} \mathrm{H}^i(\mathbb{P}^1_A,\Im f)\to \mathrm{H}^{i+1}(\mathbb{P}^1_A,\Ker f),\\
		&\mathrm{H}^i(\mathbb{P}^1_A,\Im f)\to \mathrm{H}^i(\mathbb{P}^1_A,G)\to \mathrm{H}^i(\mathbb{P}^1_A,\Coker f).
	\end{align*}
	By our assumption, the rightmost terms are $0$ for $i>0$, so the composition
	$$
		\mathrm{H}^i(\mathbb{P}^1_A,F)\xrightarrow{\varphi} \mathrm{H}^i(\mathbb{P}^1_A,\Im f)\to \mathrm{H}^i(\mathbb{P}^1_A,G)
	$$
	is surjective for $i>0$.
\end{proof}

\begin{lemma}\label{construction_of_m}
    Let $a,b\in \mathbb{Q}_{\geq 0}$ and suppose that $a\neq 0$.
    Let $N$ be a positive integer such that $c:=a+b-\frac{a+b+1}{N+1}$ and $Na-1$ are positive.
    Then there are integers $m,m'\geq 0$ such that the following conditions hold:
    $$
        m+m'=N,\quad mc \leq Na-1,\quad m'c\leq Nb.
    $$
\end{lemma}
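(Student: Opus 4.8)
The plan is to eliminate $m'$ using the constraint $m+m'=N$ and thereby reduce everything to the existence of a single integer in a short interval. Setting $m'=N-m$ and dividing the two inequalities $mc\le Na-1$ and $(N-m)c\le Nb$ by the positive number $c$, I see that the three required conditions are equivalent to the existence of an integer $m$ satisfying
$$
    \frac{N(c-b)}{c}\le m\le \frac{Na-1}{c},\qquad 0\le m\le N.
$$
So the whole lemma reduces to showing that the interval $I:=\bigl[\tfrac{N(c-b)}{c},\tfrac{Na-1}{c}\bigr]$ contains an element of $\{0,1,\dots,N\}$.

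The key point---and, I expect, the only real content here---is that the somewhat mysterious definition of $c$ is arranged precisely so that $I$ has length exactly $1$. Indeed, clearing the denominator in $c=a+b-\frac{a+b+1}{N+1}$ yields the identity $c(N+1)=N(a+b)-1$, and substituting it into
$$
    \frac{Na-1}{c}-\frac{N(c-b)}{c}=\frac{N(a+b)-Nc-1}{c}
$$
collapses the numerator to $c$, so that the right-hand side equals $1$. Hence the upper endpoint of $I$ is exactly its lower endpoint plus $1$.

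It then remains to position $I$ relative to $\{0,\dots,N\}$ and pick $m$. The hypotheses $Na-1>0$ and $c>0$ make the upper endpoint $\tfrac{Na-1}{c}$ positive, while $\tfrac{N(c-b)}{c}=N-\tfrac{Nb}{c}\le N$ since $b\ge0$ and $c>0$. Writing $\ell$ for the lower endpoint, we have $\ell\le N$ from the bound just given, while the positivity of the upper endpoint $\ell+1$ forces $\ell>-1$. If $\ell\ge0$ I take $m=\lceil\ell\rceil$, which lies in $[\ell,\ell+1]=I$ and satisfies $0\le m\le N$ because $\ell\le N$ and $N$ is an integer; if $\ell<0$ I take $m=0$, which lies in $I$ because $\ell<0$ and the upper endpoint $\ell+1>0$. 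Setting $m'=N-m\ge0$ then gives the desired pair, and no difficulty arises beyond the length computation above.
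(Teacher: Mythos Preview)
Your proof is correct and is essentially the same argument as the paper's, just presented in different language: both hinge on the identity $(N+1)c=N(a+b)-1$, and where you phrase the conclusion as ``the closed interval $I$ has length exactly $1$, hence contains an integer in $\{0,\dots,N\}$'', the paper phrases it as writing $Na-1=mc+r$ with $0<r\le c$ and $Nb=m'c+r'$ with $0\le r'<c$ and observing $r+r'=c$ forces $m+m'=N$. Your interval formulation is arguably more transparent about why the construction works, but there is no substantive difference.
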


\begin{proof}
    By definition, we have $(N+1)c = Na+Nb-1$.
    Write $Na-1=mc+r$ and $Nb = m'c+r'$ with $0 < r \leq c$ and $0 \leq r' < c$.
    Then we get $(N+1)c = (m+m')c+(r+r')$, so $r+r'$ is a multiple of $c$.
    Since $0< r+r'< 2c$, we must have $r+r'=c$.
    Therefore we get $m+m'=N$, $mc\leq Na-1$, and $m'c\leq Nb$.
\end{proof}

The blow-up of $\mathbb{A}^2$ at the origin is isomorphic to the total space of the line bundle $\mathcal{O}_{\mathbb{P}^1}(-1)$ on $\mathbb{P}^1$.
We first study the cohomology of this scheme.
Let $H^{(n)}$ denote the total space of the line bundle $\mathcal{O}_{\mathbb{P}^1}(-n)$ on $\mathbb{P}^1$;
it is obtained by gluing $\Spec k[s,x]$ and $\Spec k[s^{-1},s^nx]$ along $\Spec k[s^\pm,x]$.
Let $\pi\colon H^{(n)}\to \mathbb{P}^1$ be the canonical projection.
We define divisors $D_0, D_\infty, E$ on $H^{(n)}$ by
$$
	D_0 = \pi^{-1}(0),\quad D_\infty = \pi^{-1}(\infty), \quad E = \text{(image of the zero section)}.
$$
We define $\mathcal{H}^{(n)}(a,b,c)=(H^{(n)}, a D_0+b D_\infty+c E)$ for $a,b,c\in \mathbb{Q}_{\geq 0}$.
In terms of toric geometry, $H^{(n)}$ corresponds to the fan $\Delta_n$ given by the faces of the cones $\sigma,\tau\subset \mathbb{R}^2$ where
\begin{align*}
    \sigma &{}= \{(x,y)\in \mathbb{R}^2\mid x,y\geq 0\},\\
    \tau &{}= \{(x,y)\in \mathbb{R}^2\mid x\leq 0,\;y\geq -nx\}.
\end{align*}
The divisors $D_0,D_\infty,E$ correspond to the rays generated by $(1,0),(-1,n),(0,1)$ respectively.

\begin{figure}[h]
    \begin{tikzpicture}[scale=1.0]
        \fill [gray!30] (0,0) -- (0,2) -- (-1,2) -- (0,0);
        \fill [gray!30] (0,0) -- (2,0) -- (2,2) -- (0,2) -- (0,0);
        \draw (-2,0) -- (2,0);
        \draw (0,-1) -- (0,2);
        \draw [thick] (0,0) -- (-1,2);
        \draw [thick] (0,2) -- (0,0) -- (2,0);
        \node at (0,2) [above] {$E$};
        \node at (2,0) [right] {$D_0$};
        \node at (-1,2) [above] {$D_\infty$};
        \node at (1,1) {$\sigma$};
        \node at (-0.4,1.5) {$\tau$};
    \end{tikzpicture}
    \caption{The fan $\Delta_n$}
\end{figure}

For $\mathcal{X}\in \MCor^{\mathbb{Q},\ls}$, we write $\mathcal{H}^{(n)}_\mathcal{X}(a,b,c) = \mathcal{H}^{(n)}(a,b,c)\otimes \mathcal{X}$ and write $\pi_X\colon H^{(n)}_X\to \mathbb{P}^1_X$ for the base change of $\pi\colon H^{(n)}\to \mathbb{P}^1$.

\begin{lemma}\label{Hirzebruch_vanishing}
    Suppose that $F\in \Sh_\Nis(\MCor^{\mathbb{Q}})$ has cohomological cube invariance and affine vanishing property.
    Let $\mathcal{X}$ be the henselian localization of an object of $\MCor^{\mathbb{Q},\ls}$ and $a,b\in \mathbb{Q}_{\geq 0}$, $a\neq 0$.
    Let $N$ be a positive integer such that $c:=a+b-\frac{a+b+1}{N+1}$ and $Na-1$ are positive.
    Then the cohomology group
    $$
        \mathrm{H}^i(H^{(1)}_X, F_{\mathcal{H}^{(1)}_\mathcal{X}(a,b,c)})
    $$
    is annihilated by $N$ for $i>0$.
\end{lemma}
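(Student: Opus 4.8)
The plan is to combine a degree-$N$ base change with a single use of cohomological cube invariance and affine vanishing property, glued together by \Cref{P1_vanishing_lemma} and the combinatorics of \Cref{construction_of_m}. Throughout, $X=\Spec A$ with $A$ henselian local, so $\mathrm{H}^{>0}(X,-)=0$ and $\mathbb{P}^1_X$ is a projective line over a henselian local ring.

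First I would set up the degree-$N$ reduction. Consider the finite toric morphism $\tilde\psi\colon H^{(N)}\to H^{(1)}$ induced by the lattice map $\left(\begin{smallmatrix}N&0\\0&1\end{smallmatrix}\right)$; it covers the $N$-th power map $s\mapsto s^N$ on the base $\mathbb{P}^1$ and is the identity on fibers. Since $s\mapsto s^N$ ramifies to order $N$ at $0,\infty$ while the fiber direction is unramified, one gets $\tilde\psi^*(aD_0+bD_\infty+cE)=Na\,D_0+Nb\,D_\infty+c\,E$, so that $\tilde\psi$ is a minimal ambient morphism $\mathcal{H}^{(N)}(Na,Nb,c)\to\mathcal{H}^{(1)}(a,b,c)$ with $\tilde\psi^\circ$ finite pseudo-dominant of degree $N$. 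Both $\tilde\psi$ and its transpose ${}^t\tilde\psi$ then live in $\MCor^{\mathbb{Q}}$ and satisfy $\tilde\psi\circ{}^t\tilde\psi=N\cdot\id$. Applying $F$ and passing to cohomology, the multiplication-by-$N$ endomorphism of $\mathrm{H}^i(H^{(1)}_X,F_{\mathcal{H}^{(1)}_\mathcal{X}(a,b,c)})$ factors through $\mathrm{H}^i(H^{(N)}_X,F_{\mathcal{H}^{(N)}_\mathcal{X}(Na,Nb,c)})$, so it suffices to prove that the latter group vanishes for $i>0$.

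Next I would descend along $\pi_X\colon H^{(N)}_X\to\mathbb{P}^1_X$. Its fibers are affine lines meeting $E$ once, so affine vanishing property (applied over the two standard charts of $\mathbb{P}^1$, with base $\mathcal{X}\otimes(\mathbb{A}^1,Na[0])$ resp.\ $\mathcal{X}\otimes(\mathbb{A}^1,Nb[0])$ and the remaining $\mathbb{A}^1$-factor carrying the modulus $c$) yields $\mathrm{R}^i\pi_{X,*}F_{\mathcal{H}^{(N)}_\mathcal{X}(Na,Nb,c)}=0$ for $i>0$. Writing $G:=\pi_{X,*}F_{\mathcal{H}^{(N)}_\mathcal{X}(Na,Nb,c)}$, the Leray spectral sequence reduces us to showing $\mathrm{H}^i(\mathbb{P}^1_X,G)=0$ for $i>0$. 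To compare $G$ with something acyclic, apply \Cref{construction_of_m} to get $m,m'\ge 0$ with $m+m'=N$, $mc\le Na-1$, and $m'c\le Nb$, and let $g\colon H^{(N)}\to\mathbb{P}^1\times\mathbb{A}^1$ be the birational toric morphism given by $\left(\begin{smallmatrix}1&0\\m&1\end{smallmatrix}\right)$, which covers $\pi$ on the first factor and satisfies $g^*y=s^mx$ for the coordinate $y$ on $\mathbb{A}^1$. A divisor computation gives $g^*\{s=0\}=D_0$ and $g^*\{y=0\}=mD_0+E+m'D_\infty$, whence
\[
    g^*\bigl([0_s]+c\,[0_y]\bigr)=(1+cm)\,D_0+cm'\,D_\infty+c\,E.
\]
By the two inequalities this divisor is $\le Na\,D_0+Nb\,D_\infty+c\,E$, so $g$ is an ambient morphism $\mathcal{H}^{(N)}_\mathcal{X}(Na,Nb,c)\to\mathcal{N}$, where $\mathcal{N}:=(\mathbb{A}^1_y,c[0])\otimes(\mathbb{P}^1_s,[0])\otimes\mathcal{X}$. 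Cube invariance for the $\mathbb{P}^1_s$-factor (a copy of $\bcube$) together with affine vanishing for the $\mathbb{A}^1_y$-factor gives $\mathrm{H}^i(H^{(0)}_X,F_\mathcal{N})=0$ for $i>0$.

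Finally I would glue these together. Pushing forward along $\pi_{0,X}\colon\mathbb{P}^1\times\mathbb{A}^1\to\mathbb{P}^1$ (again $\mathrm{R}^{>0}\pi_{0,X,*}=0$ by affine vanishing) and using $\pi_0\circ g=\pi$, the ambient morphism $g$ induces a morphism of sheaves $\Phi\colon\pi_{0,X,*}F_\mathcal{N}\to G$ on $\mathbb{P}^1_X$. Over $\mathbb{G}_m=\mathbb{P}^1\setminus\{0,\infty\}$ the map $g$ is an isomorphism matching the moduli (there $g^*\{y=0\}$ restricts to $E$), so $\Ker\Phi$ and $\Coker\Phi$ are supported on $\{0,\infty\}$. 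By \Cref{P1_vanishing_lemma} the induced map $\mathrm{H}^i(\mathbb{P}^1_X,\pi_{0,X,*}F_\mathcal{N})\to\mathrm{H}^i(\mathbb{P}^1_X,G)$ is surjective for $i>0$; since the source equals $\mathrm{H}^i(H^{(0)}_X,F_\mathcal{N})=0$, we get $\mathrm{H}^i(\mathbb{P}^1_X,G)=0$, and the proof is complete. The main obstacle is exactly the divisor bookkeeping for $g$: the coefficient $c$ on $E$ must be matched on the nose so that the discrepancy lives only over $\{0,\infty\}$ (where \Cref{P1_vanishing_lemma} operates), while the excess of $cE$ must be split between $D_0$ and $D_\infty$ within the budgets $Na$ and $Nb$ — this is precisely the role of \Cref{construction_of_m}, with the asymmetric "$-1$" absorbed by the cube divisor $[0_s]$ pulling back to $D_0$.
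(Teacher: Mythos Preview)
Your proposal is correct and follows essentially the same route as the paper: the degree-$N$ toric cover $H^{(N)}\to H^{(1)}$ via $\left(\begin{smallmatrix}N&0\\0&1\end{smallmatrix}\right)$ to reduce to $\mathcal{H}^{(N)}(Na,Nb,c)$, affine vanishing to descend to $\mathbb{P}^1_X$, the birational toric map $H^{(N)}\to H^{(0)}$ via $\left(\begin{smallmatrix}1&0\\m&1\end{smallmatrix}\right)$ with $m,m'$ from \Cref{construction_of_m} to compare against $\mathcal{H}^{(0)}(1,0,c)\simeq\bcube\otimes(\mathbb{A}^1,c[0])\otimes\mathcal{X}$, and \Cref{P1_vanishing_lemma} plus cube invariance to finish. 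The only differences are cosmetic (your $(\mathbb{P}^1_s,[0])$ is the paper's $\bcube$ up to the obvious automorphism, and your divisor computation $g^*\{y=0\}=mD_0+E+m'D_\infty$ matches the paper's $\psi_{m,m'}^*E=mD_0+m'D_\infty+E$).
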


\begin{proof}
    Let $\rho_{N}\colon \mathbb{P}^1_X\to \mathbb{P}^1_X$ be the morphism $s\mapsto s^N$.
    The pullback of $\mathcal{O}_{\mathbb{P}^1_X}(-1)$ by $\rho_N$ is canonically isomorphic to $\mathcal{O}_{\mathbb{P}^1_X}(-N)$, so we have the following Cartesian diagram:
    $$
    \xymatrix{
        H^{(N)}_X\ar[r]^-{\theta_N}\ar[d]		&H^{(1)}_X\ar[d]\\
        \mathbb{P}^1_X\ar[r]^-{\rho_N}				&\mathbb{P}^1_X
    }
    $$
    In terms of toric geometry, the morphism $\theta_N$ corresponds to the map of fans $\Delta_N\to \Delta_1$ induced by the map of lattices
        $$
            \begin{pmatrix}N&0\\0&1\end{pmatrix}\colon \mathbb{Z}^2\to \mathbb{Z}^2.
        $$
    The morphism $\theta_N$ is finite locally free of degree $N$, and it induces a minimal ambient morphism $\mathcal{H}_\mathcal{X}^{(N)}(Na,Nb,c)\to \mathcal{H}_\mathcal{X}^{(1)}(a,b,c)$.
    The composition
    $$
        F_{\mathcal{H}^{(1)}_\mathcal{X}(a,b,c)}\xrightarrow{\theta_N^*} \theta_{N,*} F_{\mathcal{H}^{(N)}_\mathcal{X}(Na,Nb,c)}\xrightarrow{\theta_{N,*}} F_{\mathcal{H}^{(1)}_\mathcal{X}(a,b,c)}
    $$
    is equal to the multiplication by $N$, so it suffices to show that 
    $$
        \mathrm{H}^i\bigl(H^{(N)}_X, \theta_{N,*}F_{\mathcal{H}^{(N)}_\mathcal{X}(Na,Nb,c)}\bigr)=0\quad (i>0).
    $$
    Since $\theta_N$ is finite, we have $\mathrm{R}^i\theta_{N,*}=0\;(i>0)$.
    Therefore the above group is isomorphic to
    $$
        \mathrm{H}^i\bigl(H^{(N)}_X, F_{\mathcal{H}^{(N)}_\mathcal{X}(Na,Nb,c)}\bigr).
    $$
    Also, we have $\mathrm{R}^i\pi_{X,*}F_{\mathcal{H}^{(N)}_\mathcal{X}(Na,Nb,c)}=0$ for $i>0$ by the affine vanishing property of $F$, so it suffices to prove that $\mathrm{H}^i\bigl(\mathbb{P}^1_X, \pi_{X,*}F_{\mathcal{H}_\mathcal{X}^{(N)}(Na,Nb,c)}\bigr)=0$ for $i>0$.
        
        By \Cref{construction_of_m}, there are integers $m,m'\geq 0$ such that
	$$
		m+m'=N,\quad mc \leq Na-1,\quad m'c\leq Nb.
	$$
        Let $\varphi_{m,m'}$ denote the section $\mathcal{O}_{\mathbb{P}^1_X}\to \mathcal{O}_{\mathbb{P}^1_X}(N)$ defined by $S^mT^{m'}$, where $S,T$ are the homogeneous coordinates of $\mathbb{P}^1_X$.
        Twisting by $-N$, we get a morphism $\mathcal{O}_{\mathbb{P}^1_X}(-N)\to \mathcal{O}_{\mathbb{P}^1_X}$.
	Let $\psi_{m,m'}\colon H^{(N)}_X\to H^{(0)}_X$ denote the induced morphism of schemes over $\mathbb{P}^1_X$.
    In coordinates, this can be written as $x\mapsto s^mx$.
    In terms of toric geometry, this corresponds to the map of fans $\Delta_N\to \Delta_0$ induced by the map of lattices
    $$
        \begin{pmatrix}1&0\\m&1\end{pmatrix}\colon \mathbb{Z}^2\to \mathbb{Z}^2.
    $$
    The inequalities $mc \leq Na-1$ and $m'c\leq Nb$ imply that we have
    \begin{align*}
        \psi_{m,m'}^*(D_0+cE)& = D_0 + c (mD_0 + m'D_\infty + E)\\
        &=(mc+1)D_0+m'cD_\infty + cE\\
        &\leq NaD_0+NbD_\infty+cE
    \end{align*}
    on $H^{(N)}_X$.
    Therefore $\psi_{m,m'}$ induces an ambient morphism
    $$
        \psi_{m,m'}\colon \mathcal{H}_\mathcal{X}^{(N)}(Na,Nb,c)\to \mathcal{H}^{(0)}_\mathcal{X}(1,0,c).
    $$
    This morphism is an isomorphism over $\mathbb{P}^1_X-\{0,\infty\}$, so the kernel and the cokernel induced morphism
    $$
    \psi_{N,m}^*\colon \pi_{X,*}F_{\mathcal{H}_\mathcal{X}^{(0)}(1,0,c)} \to \pi_{X,*}F_{\mathcal{H}_\mathcal{X}^{(N)}(Na,Nb,c)}
    $$
    are supported on $\{0,\infty\}\subset\mathbb{P}^1_X$.
    By \Cref{P1_vanishing_lemma}, this shows that the induced map
    $$
        \mathrm{H}^i\bigl(\mathbb{P}^1_X, \pi_{X,*}F_{\mathcal{H}_\mathcal{X}^{(0)}(1,0,c)}\bigr)\to
        \mathrm{H}^i\bigl(\mathbb{P}^1_X, \pi_{X,*}F_{\mathcal{H}_\mathcal{X}^{(N)}(Na,Nb,c)}\bigr)
    $$
    is surjective for $i>0$, so it suffices to prove that $\mathrm{H}^i\bigl(\mathbb{P}^1_X, \pi_{X,*}F_{\mathcal{H}_\mathcal{X}^{(0)}(1,0,c)}\bigr)=0$ for $i>0$.
    The scheme $H_X^{(0)}$ is isomorphic to $\mathbb{P}^1\times \mathbb{A}^1\times X$, and the modulus pair $\mathcal{H}_X^{(0)}(1,0,c)$ is isomorphic to $\bcube \otimes (\mathbb{A}^1,c[0])\otimes\mathcal{X}$.
    The morphism $\pi_X\colon H_X^{(0)}\to \mathbb{P}^1_X$ corresponds to $\pr_{13}\colon \mathbb{P}^1\times \mathbb{A}^1\times X\to \mathbb{P}^1\times X$.
    Therefore we have
    \begin{align*}
        &\mathrm{H}^i(\mathbb{P}^1_X, \pi_{X,*}F_{\mathcal{H}_\mathcal{X}^{(0)}(1,0,c)})\\
        \simeq{} &\mathrm{H}^i(\mathbb{P}^1\times X, \pr_{13,*}F_{\bcube \otimes (\mathbb{A}^1,c[0])\otimes\mathcal{X}})\\
        \simeq{} &\mathrm{H}^i(\mathbb{P}^1\times\mathbb{A}^1\times X, F_{\bcube \otimes (\mathbb{A}^1,c[0])\otimes\mathcal{X}})
        &\cdots\text{(affine vanishing property)}\\
        \simeq{} &\mathrm{H}^i(\mathbb{A}^1\times X, F_{(\mathbb{A}^1,c[0])\otimes\mathcal{X}})
        &\cdots\text{(cohomological cube invariance)}\\
        \simeq{} &\mathrm{H}^i(X, \pr_{2,*} F_{(\mathbb{A}^1,c[0])\otimes\mathcal{X}})
        &\cdots\text{(affine vanishing property)}\\
        ={}&0.
    \end{align*}
    This completes the proof.
\end{proof}

\begin{lemma}\label{Hirzebruch_vanishing_2}
	Suppose that $F\in \Sh_\Nis(\MCor^{\mathbb{Q}})$ has cohomological cube invariance, affine vanishing property, and left continuity.
	Let $\mathcal{X}$ be the henselian localization of an object of $\MCor^{\mathbb{Q},\ls}$ and $a,b\in \mathbb{Q}_{\geq 0}$, $a\neq 0$.
	Then we have
        $$
		\mathrm{H}^i(H^{(1)}_X, F_{\mathcal{H}^{(1)}_\mathcal{X}(a,b,a+b)})=0\quad (i>0).
	$$
\end{lemma}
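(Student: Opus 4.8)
The plan is to deduce the vanishing at the \emph{critical} coefficient $c=a+b$ from the $N$-torsion statement of \Cref{Hirzebruch_vanishing}, which only applies at the strictly subcritical values $c_N := a+b-\frac{a+b+1}{N+1}$, by a limiting argument resting on left continuity. First I would apply left continuity sectionwise over the small \'etale site of $H^{(1)}_X$ to identify the Nisnevich sheaf $F_{\mathcal{H}^{(1)}_\mathcal{X}(a,b,a+b)}$ with the filtered colimit $\colim_{\varepsilon\to 0}F_{\mathcal{H}^{(1)}_\mathcal{X}((1-\varepsilon)a,(1-\varepsilon)b,(1-\varepsilon)(a+b))}$. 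Since $X$ is a henselian local scheme, $H^{(1)}_X$ is Noetherian of finite Krull dimension, so Nisnevich cohomology commutes with this filtered colimit and yields $\mathrm{H}^i(H^{(1)}_X,F_{\mathcal{H}^{(1)}_\mathcal{X}(a,b,a+b)})\cong \colim_{\varepsilon\to 0}\mathrm{H}^i(H^{(1)}_X,F_{\mathcal{H}^{(1)}_\mathcal{X}((1-\varepsilon)a,(1-\varepsilon)b,(1-\varepsilon)(a+b))})$.

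The key point is that, although left continuity only rescales the whole divisor and therefore keeps us on the critical locus, the rescaled divisor $(1-\varepsilon)(aD_0+bD_\infty+(a+b)E)$ is dominated coefficientwise by the subcritical divisor $aD_0+bD_\infty+c_N E$ as soon as $c_N\geq (1-\varepsilon)(a+b)$. Because $c_N\to a+b$ and (using $a\neq 0$) the positivity hypotheses of \Cref{Hirzebruch_vanishing} hold for all large $N$, such $N$ exist for every fixed $\varepsilon$. As all comparison maps are induced by the identity on $H^{(1)}_X$ together with divisor inequalities, for each fixed $\varepsilon$ and each such $N$ the canonical map $\mathrm{H}^i(H^{(1)}_X,F_{\mathcal{H}^{(1)}_\mathcal{X}((1-\varepsilon)a,(1-\varepsilon)b,(1-\varepsilon)(a+b))})\to \mathrm{H}^i(H^{(1)}_X,F_{\mathcal{H}^{(1)}_\mathcal{X}(a,b,a+b)})$ factors through $\mathrm{H}^i(H^{(1)}_X,F_{\mathcal{H}^{(1)}_\mathcal{X}(a,b,c_N)})$.

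With these pieces in place I would conclude as follows. Fix $i>0$ and a class $\xi\in \mathrm{H}^i(H^{(1)}_X,F_{\mathcal{H}^{(1)}_\mathcal{X}(a,b,a+b)})$. By the colimit presentation, $\xi$ is the image of some class at a finite level $\varepsilon$; the factorization then exhibits $\xi$ as the image of a class in $\mathrm{H}^i(H^{(1)}_X,F_{\mathcal{H}^{(1)}_\mathcal{X}(a,b,c_N)})$, which is annihilated by $N$ by \Cref{Hirzebruch_vanishing}, whence $N\xi=0$. Crucially, the same $\varepsilon$-level works for all sufficiently large $N$, so $\xi$ is annihilated by two consecutive integers $N$ and $N+1$; since $\gcd(N,N+1)=1$ this forces $\xi=0$, giving the desired vanishing.

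The main obstacle is precisely the mismatch between the uniform rescaling permitted by left continuity, which never leaves the critical locus, and the partial rescaling of the $E$-coefficient required to invoke \Cref{Hirzebruch_vanishing}; I expect the crux to be the coefficientwise domination that bridges the two, combined with the coprimality trick that upgrades ``$N$-torsion for all large $N$'' to ``zero''. A secondary technical point to verify carefully is that left continuity and the formation of cohomology interact well over the henselian-local base, i.e.\ that Nisnevich cohomology on $H^{(1)}_X$ commutes with the relevant filtered colimit.
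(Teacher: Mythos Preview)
Your proposal is correct and follows essentially the same strategy as the paper: invoke \Cref{Hirzebruch_vanishing} at the subcritical values $c_N$, pass to $c=a+b$ via left continuity, and conclude by a coprimality argument. The only cosmetic differences are that the paper runs the argument with $N=2^M$ and then $N=3^M$ (obtaining $2$- and $3$-primary torsion) whereas you use two consecutive integers, and that you spell out more carefully than the paper the factorization through the subcritical level needed to reconcile the uniform rescaling in left continuity with the $E$-only rescaling in \Cref{Hirzebruch_vanishing}.
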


\begin{proof}
    Let $M$ be a positive integer.
    If $M$ is large enough, then $N=2^M$ satisfies the assumption of \Cref{Hirzebruch_vanishing}.
    In this situation, \Cref{Hirzebruch_vanishing} shows that the group
    $$
        \mathrm{H}^i(H^{(1)}_X, F_{\mathcal{H}^{(1)}_\mathcal{X}(a,b,c)})=0
    $$
    is annihilated by $2^M$ for $i>0$, where $c=a+b-\frac{a+b+1}{2^M+1}$.
    Taking the colimit $M\to \infty$ and using the left continuity of $F$, we see that $\mathrm{H}^i(H^{(1)}_X, F_{\mathcal{H}^{(1)}_\mathcal{X}(a,b,c)})$ is a $2$-primary torsion for $i>0$.
    The same argument shows that $\mathrm{H}^i(H^{(1)}_X, F_{\mathcal{H}^{(1)}_\mathcal{X}(a,b,c)})$ is also a $3$-primary torsion for $i>0$, so this group vanishes.
\end{proof}

\begin{lemma}\label{blowup-a2}
    Suppose that $F\in \Sh_\Nis(\MCor^{\mathbb{Q}})$ has cohomological ls cube invariance, affine vanishing property, and left continuity.
    Let $\mathcal{X}\in \MCor^{\mathbb{Q},\ls}$ and let $\varphi\colon B_X\to \mathbb{A}^2_X$ denote the blow-up of $\mathbb{A}^2_X$ at the origin.
    Let $L=\mathbb{A}^1_X\times\{0\}$, $L'=\{0\}\times\mathbb{A}^1_X$ be two lines in $\mathbb{A}^2_X$.
    Then for any $a,b\in \mathbb{Q}_{\geq 0}$ with $a\neq 0$, we have
    $$
        \mathrm{R}^j\varphi_*F_{(B_X,\varphi^*(aL+bL'))}=0\quad(j>0).
    $$
\end{lemma}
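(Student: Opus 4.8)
The plan is to compute $\mathrm{R}^j\varphi_*F_{(B_X,\varphi^*(aL+bL'))}$ stalkwise for the Nisnevich topology and to show that each stalk vanishes for $j>0$. First I would observe that $\varphi$ is an isomorphism over the complement of its center $Z=L\cap L'=\{0\}\times X$, so that $\mathrm{R}^j\varphi_*F$ is supported on $Z$ and only the stalks at points $z\in Z$ need to be examined. Since $\varphi$ is proper (indeed projective) and blow-ups commute with the flat base change $\Spec\mathcal{O}^h_{\mathbb{A}^2_X,z}\to\mathbb{A}^2_X$, and since Nisnevich cohomology commutes with the cofiltered limit of \'etale neighborhoods defining the henselization, I would identify the stalk $(\mathrm{R}^j\varphi_*F)_z$ with $\mathrm{H}^j$ of $F$ on the blow-up of $\Spec\mathcal{O}^h_{\mathbb{A}^2_X,z}$ at the origin. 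Writing $\bar z\in X$ for the image of $z$, this blow-up is $H^{(1)}$ over the henselian localization $\mathcal{X}'$ of $\mathcal{X}$ at $\bar z$.

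Next I would identify the modulus. Using the two standard blow-up charts $v=ut$ and $u=vs$, the strict transforms of $L=\{v=0\}$ and $L'=\{u=0\}$ are exactly the two fibers $D_0$ and $D_\infty$ of $\pi\colon H^{(1)}\to\mathbb{P}^1$, while the exceptional divisor is the zero section $E$. Pulling back gives
$$
    \varphi^*(aL+bL')=a D_0+b D_\infty+(a+b)E,
$$
so that $(B_z,\varphi^*(aL+bL'))$ coincides with $\mathcal{H}^{(1)}_{\mathcal{X}'}(a,b,a+b)$ and the stalk equals $\mathrm{H}^j\bigl(H^{(1)}_{X'},F_{\mathcal{H}^{(1)}_{\mathcal{X}'}(a,b,a+b)}\bigr)$. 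Since $\mathcal{X}'$ is the henselian localization of an object of $\MCor^{\mathbb{Q},\ls}$ and $a\neq 0$, \Cref{Hirzebruch_vanishing_2} shows that this group vanishes for $j>0$; hence all stalks of $\mathrm{R}^j\varphi_*F$ vanish and the lemma follows.

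The hard part will be the first step, namely justifying the stalkwise description of the higher direct image: one has to check that properness of $\varphi$ together with the compatibility of Nisnevich cohomology with the henselization limit really identifies $(\mathrm{R}^j\varphi_*F)_z$ with the cohomology of $F$ over the blow-up of the henselian local scheme, paying attention to how the divisor $D_X$ is carried through the henselization. Once this reduction is secured, the chart computation of the total transform and the appeal to \Cref{Hirzebruch_vanishing_2} are routine, and the hypothesis $a\neq 0$ is exactly what \Cref{Hirzebruch_vanishing_2} requires.
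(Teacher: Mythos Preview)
Your reduction to stalks and the identification of the total transform $\varphi^*(aL+bL')=aD_0+bD_\infty+(a+b)E$ are fine, and the final appeal to \Cref{Hirzebruch_vanishing_2} is exactly what the paper does. The gap is in the sentence ``this blow-up is $H^{(1)}$ over the henselian localization $\mathcal{X}'$ of $\mathcal{X}$ at $\bar z$.'' That identification is false: $H^{(1)}_{X'}$ is by definition the blow-up of $\mathbb{A}^2_{X'}$ along $\{0\}\times X'$, a relative surface over $X'$, whereas the scheme you produced is the blow-up of the \emph{henselian local} scheme $\Spec\mathcal{O}^h_{\mathbb{A}^2_X,z}$ at its closed point. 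Already for $X=\Spec k$ these differ: one is $\Bl_0\mathbb{A}^2_k$, the other is $\Bl_0\Spec k\{u,v\}$. So you cannot feed your stalk directly into \Cref{Hirzebruch_vanishing_2}, whose hypothesis requires the base to be (a henselian localization of) a log-smooth modulus pair, not the $\mathbb{A}^2$-direction henselized as well.

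The paper closes this gap by inserting a Leray spectral sequence rather than computing the stalk as a single cohomology group. After replacing $\mathcal{X}$ by its henselization at $\bar z$, the sheaf $\mathrm{R}^j\varphi_*F$ is supported on $\{0\}\times X'$, which is henselian local, so $\mathrm{H}^i(\mathbb{A}^2_{X'},\mathrm{R}^j\varphi_*F)=0$ for $i>0$; the affine vanishing property gives $\mathrm{H}^i(\mathbb{A}^2_{X'},\varphi_*F)=0$ for $i>0$. The Leray spectral sequence for $\varphi$ then degenerates to an isomorphism
\[
\mathrm{H}^0\bigl(\mathbb{A}^2_{X'},\mathrm{R}^j\varphi_*F\bigr)\;\simeq\;\mathrm{H}^j\bigl(B_{X'},F_{(B_{X'},\varphi^*(aL+bL'))}\bigr),
\]
and it is the right-hand side, not your stalk, that equals $\mathrm{H}^j\bigl(H^{(1)}_{X'},F_{\mathcal{H}^{(1)}_{\mathcal{X}'}(a,b,a+b)}\bigr)$ and is killed by \Cref{Hirzebruch_vanishing_2}. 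Since the left-hand side is the stalk of $\mathrm{R}^j\varphi_*F$ at the closed point of $\{0\}\times X'$, this finishes the argument. In short, your ``hard part'' is not the limit-compatibility step you flagged; it is that the stalk you compute and the group that \Cref{Hirzebruch_vanishing_2} controls are genuinely different objects, and the affine vanishing property is precisely what lets you pass from one to the other.
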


\begin{proof}
    We may replace $\mathcal{X}$ by its henselian localization.
    Since $\varphi$ is an isomorphism on $\mathbb{A}^2_X-\{(0,0)\}$, the higher direct images $\mathrm{R}^j\varphi_*F_{(B_X,\varphi^*(aL+bL'))}\;(j>0)$ are supported on $\{(0,0)\}\subset\mathbb{A}^2_X$ and hence
	$$
		\mathrm{H}^i(\mathbb{A}^2_X, \mathrm{R}^j\varphi_*F_{(B_X,\varphi^*(aL+bL'))})=0\quad (i,j>0).
	$$
	On the other hand, we have
	\begin{align*}
		\mathrm{H}^i(\mathbb{A}^2_X, \varphi_*F_{(B_X,\varphi^*(aL+bL'))})
            ={}&\mathrm{H}^i(\mathbb{A}^2\times X,F_{(\mathbb{A}^2,a L+b L')\otimes \mathcal{X}})\\
            \simeq{}&\mathrm{H}^i(X,\pr_{2,*}F_{(\mathbb{A}^2,a L+b L')\otimes \mathcal{X}})=0\quad (i>0)
        \end{align*}
        by the affine vanishing property.
        Consider the Leray spectral sequence for $\varphi$:
        $$
            \mathrm{E}^{i,j}_2 = \mathrm{H}^i(\mathbb{A}^2_X, \mathrm{R}^j\varphi_*F_{(B_X,\varphi^*(aL+bL'))})
            \Rightarrow \mathrm{H}^{i+j}(B_X,F_{(B_X,\varphi^*(aL+bL'))}).
        $$
        We have $\mathrm{E}^{i,j}_2=0$ for $i>0$, so we get an isomorphism
	$$
		\mathrm{H}^0(\mathbb{A}^2_X, \mathrm{R}^j\varphi_*F_{(B_X,\varphi^*(aL+bL'))})\simeq \mathrm{H}^j(B_X,F_{(B_X,\varphi^*(aL+bL'))}).
	$$
	It suffices to show that the right hand side vanishes for $j>0$.
	Now recall that we have an isomorphism $B_X\simeq H^{(1)}_X$ which induces an isomorphism of modulus pairs $(B_X,\varphi^*(aL+bL'))\simeq \mathcal{H}_\mathcal{X}^{(1)}(a,b,a+b)$.
        Therefore the required vanishing follows from \Cref{Hirzebruch_vanishing}.
\end{proof}

The following lemma is an easy corollary of the universal property of blow-ups:

\begin{lemma}\label{blowup_exchange}
	Let $X$ be an algebraic $k$-scheme and $Z\subset Z'\subset X$ be closed subschemes.
	Let $Y$ denote the blow-up of $\Bl_ZX$ along the strict transform of $Z'$.
	Let $W$ denote the blow-up of $\Bl_{Z'}X$ along the inverse image of $Z$.
	Then there is an isomorphism $\rho\colon Y \xrightarrow{\sim} W$ over $X$.
\end{lemma}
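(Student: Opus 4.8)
The plan is to prove the statement purely formally from the universal property of blow-ups, by exhibiting both $Y$ and $W$ as terminal objects of one and the same category of $X$-schemes. Recall that for an ideal sheaf $\mathcal{I}$ on a scheme $S$, the blow-up $\Bl_{\mathcal{I}}S\to S$ is terminal among $S$-schemes $T\to S$ such that $\mathcal{I}\cdot\mathcal{O}_T$ is an invertible ideal sheaf, and that $\Bl_{\mathcal{L}\cdot\mathcal{J}}S\cong \Bl_{\mathcal{J}}S$ over $S$ whenever $\mathcal{L}$ is invertible (both facts are immediate from this universal property, the second because multiplying an ideal by an invertible ideal does not change the condition of becoming invertible after pullback). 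Write $\mathcal{I}_Z,\mathcal{I}_{Z'}$ for the ideal sheaves of $Z,Z'$.

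First I would reduce the blow-up along the strict transform appearing in the definition of $Y$ to a blow-up along a total transform. Let $\pi\colon \Bl_ZX\to X$ have exceptional divisor $E$, and let $\widetilde{Z'}$ be the strict transform of $Z'$. Since the total transform ideal factors as $\mathcal{I}_{Z'}\cdot\mathcal{O}_{\Bl_ZX}=\mathcal{I}_E^m\cdot\mathcal{I}_{\widetilde{Z'}}$ with $\mathcal{I}_E$ invertible, the invariance recalled above gives $Y=\Bl_{\widetilde{Z'}}(\Bl_ZX)\cong \Bl_{\mathcal{I}_{Z'}\cdot\mathcal{O}_{\Bl_ZX}}(\Bl_ZX)$ over $\Bl_ZX$, hence over $X$. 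On the other side, the inverse image of $Z$ in $\Bl_{Z'}X$ is by definition the closed subscheme cut out by $\mathcal{I}_Z\cdot\mathcal{O}_{\Bl_{Z'}X}$, so $W=\Bl_{\mathcal{I}_Z\cdot\mathcal{O}_{\Bl_{Z'}X}}(\Bl_{Z'}X)$ with no modification needed. Thus it remains to identify the two symmetric iterated blow-ups $A:=\Bl_{\mathcal{I}_{Z'}\cdot\mathcal{O}_{\Bl_ZX}}(\Bl_ZX)$ and $B:=\Bl_{\mathcal{I}_Z\cdot\mathcal{O}_{\Bl_{Z'}X}}(\Bl_{Z'}X)$.

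For this I would consider the category $\mathcal{C}$ of $X$-schemes $f\colon T\to X$ such that both $\mathcal{I}_Z\cdot\mathcal{O}_T$ and $\mathcal{I}_{Z'}\cdot\mathcal{O}_T$ are invertible, and check that $A$ and $B$ are both terminal in $\mathcal{C}$. Both objects lie in $\mathcal{C}$: on $A$ the sheaf $\mathcal{I}_{Z'}\cdot\mathcal{O}_A$ is invertible by construction, while $\mathcal{I}_Z\cdot\mathcal{O}_A$ is the pullback of the invertible sheaf $\mathcal{I}_Z\cdot\mathcal{O}_{\Bl_ZX}$, and symmetrically for $B$. Given any $f\colon T\to X$ in $\mathcal{C}$, invertibility of $\mathcal{I}_Z\cdot\mathcal{O}_T$ yields a unique factorization $T\to \Bl_ZX$ over $X$; invertibility of $\mathcal{I}_{Z'}\cdot\mathcal{O}_T=(\mathcal{I}_{Z'}\cdot\mathcal{O}_{\Bl_ZX})\cdot\mathcal{O}_T$ then yields a unique factorization through $A$; and uniqueness at each stage shows $T\to A$ is the unique $X$-morphism. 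The same argument with the roles of $Z$ and $Z'$ exchanged shows $B$ is terminal. Terminal objects being unique up to a unique isomorphism, we obtain $A\cong B$ over $X$, and composing with $Y\cong A$ and $W=B$ gives the desired isomorphism $\rho\colon Y\xrightarrow{\sim}W$ over $X$.

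The only step that is more than formal bookkeeping is the first one, the comparison of the blow-up of the strict transform with that of the total transform: I expect the main point to be justifying the factorization $\mathcal{I}_{Z'}\cdot\mathcal{O}_{\Bl_ZX}=\mathcal{I}_E^m\cdot\mathcal{I}_{\widetilde{Z'}}$ into the exceptional (invertible) part and the strict transform part. This is a standard property of strict transforms, and in the log-smooth situation in which the lemma is applied it is transparent in local toric coordinates; once it is granted, everything else follows from the universal property alone.
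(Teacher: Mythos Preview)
The paper gives no argument of its own, citing \cite[Lemma 2.15]{BRS}. Your approach via the universal property is the expected one, and the second half --- showing that the two iterated total-transform blow-ups $A$ and $B$ are both terminal among $X$-schemes on which $\mathcal{I}_Z$ and $\mathcal{I}_{Z'}$ become invertible --- is correct.

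The problem is the first step, and it is more serious than you indicate. The factorization $\mathcal{I}_{Z'}\cdot\mathcal{O}_{\Bl_ZX}=\mathcal{I}_E^m\cdot\mathcal{I}_{\widetilde{Z'}}$ is \emph{not} a general property of strict transforms: the strict-transform ideal is the $\mathcal{I}_E$-saturation of the total-transform ideal, and the discrepancy need not be a power of $\mathcal{I}_E$. In fact, in the stated generality the lemma itself fails. Take $X=\mathbb{A}^2$, $Z=V(x,y)$, $Z'=V(x^2,xy,y^3)$: since $Z'$ is set-theoretically the origin, its strict transform in $\Bl_ZX$ is empty, so $Y=\Bl_ZX$; but on the chart $x=sy$ one computes $\mathcal{I}_{Z'}\cdot\mathcal{O}_Y=(s^2y^2,sy^2,y^3)=y^2(s,y)$, which is not invertible at $s=y=0$. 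Hence $Y$ admits no morphism to $\Bl_{Z'}X$ over $X$ and cannot be isomorphic to $W$. In the only place the paper uses this lemma, $Z$ and $Z'$ are both cut out by subsets of a regular system of parameters; there your factorization holds with $m=1$ by direct inspection in coordinates, so your argument does prove what the paper actually needs. The statement should carry an extra hypothesis along these lines, and presumably the cited reference does.
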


\begin{proof}
	See \cite[Lemma 2.15]{BRS}.
\end{proof}

Now we prove our main theorem.

\begin{proof}[Proof of \Cref{main}]
	Let $\mathcal{X}\in \MCor^{\mathbb{Q},\ls}$ and let $Z\subset|D_X|$ be a smooth closed subscheme which has simple normal crossings with $|D_X|$.
    Let $\varphi\colon \Bl_Z\mathcal{X}\to \mathcal{X}$ denote the blow-up along $Z$.
    It suffices to show that the canonical morphism $F_\mathcal{X}\to \mathrm{R}\varphi_*F_{\Bl_Z\mathcal{X}}$ is an isomorphism.
    We proceed by induction on $r=\codim_X(Z)$.
    Since the claim is local in the Nisnevich topology, we may assume that $X=\Spec K\{x_1,\dots,x_d\}$, $|D_X|=\{x_1x_2\cdots x_s=0\}$, and $Z=\{x_{i_1}=\cdots=x_{i_r}=0\}$.
    Moreover, we may assume that $i_1=1$ since $Z$ is contained in $|D_X|$.
    The case $r=1$ is trivial; the blow-up along $Z$ is an isomorphism.
    If $r=2$, then the claim follows from \Cref{blowup-a2}.
    If $r\geq 3$, then we set $Z'=\{x_1=x_2=0\}\supset Z$.
    Let $\mathcal{Y}\to \Bl_Z\mathcal{X}$ denote the blow-up along the strict transform of $Z'$.
    Let $\mathcal{W}\to \Bl_{Z'}\mathcal{X}$ denote the blow-up along the inverse image of $Z$.
    Then there is an isomorphism $\rho\colon \mathcal{Y} \xrightarrow{\sim} \mathcal{W}$ over $\mathcal{X}$ by \Cref{blowup_exchange}.
    We have the following commutative diagram of $\mathbb{Q}$-modulus pairs:
    $$
    \xymatrix{
        \mathcal{Y}\ar[rr]^-{\rho}_-{\sim}\ar[d]^-{\varphi'}       &&\mathcal{W}\ar[d]^-{\psi'}\\
        \Bl_Z\mathcal{X}\ar[rd]_-\varphi          &&\Bl_{Z'}\mathcal{X}\ar[ld]^-\psi\\
        &\mathcal{X}.
    }
    $$
    Since the codimension of $Z'$ (resp. the strict transform of $Z'$) in $X$ (resp. in $\Bl_ZX$) is $2$, we have
    $$
        \mathrm{R}\psi_*F_{\Bl_{Z'}\mathcal{X}}\simeq F_\mathcal{X}\quad
        \text{(resp. }
        \mathrm{R}\varphi'_*F_\mathcal{Y}\simeq F_{\Bl_Z\mathcal{X}}
        \text{).}
    $$
    Moreover, since the codimension of $\psi^{-1}(Z)$ in $\Bl_{Z'}X$ is $r-1$, we have
    $\mathrm{R}\psi'_*F_{\mathcal{W}}\simeq F_{\Bl_{Z'}\mathcal{X}}$
    by the induction hypothesis.
    Combining these results, we obtain $\mathrm{R}\varphi_*F_{\Bl_Z\mathcal{X}}\simeq F_\mathcal{X}$.
\end{proof}

\section{Examples}

In this section, we construct two examples of sheaves on $\MCor^\mathbb{Q}$ and apply \Cref{main} to them.

\subsection{Differential forms}

Fix an integer $q\geq 0$.
Recall from \cite[Corollary A.6.1]{KSY1} that the sheaf of differential forms $\Omega^q=\bigl(X\mapsto \Gamma(X,\Omega^q_X)\bigr)$ can be regarded as an object of $\Sh_\Nis(\Cor)$.
Kelly-Miyazaki \cite[Theorem 4.3]{KellyMiyazaki_Hodge2} extended this to a sheaf $\MOmega^q$ on $\MCor^\ls$ in the case $\ch(k)=0$.
It is characterized by the fact that the formula
$$
	\MOmega^q(\mathcal{X})=\Gamma\bigl(X,\Omega^q_X(\log|D_X|)(D_X-|D_X|)\bigr)
$$
holds for $\mathcal{X}\in \MCor$.
In this subsection, we present a construction of $\MOmega^q$ using a ramification filtration.
This method is applicable also for $\ch(k)=p>0$, and it allows us to extend $\MOmega^q$ to $\MCor^{\mathbb{Q}}$.
We show that the resulting sheaf has cohomological cube invariance, affine vanishing property, and left continuity.
As a corollary of our main theorem, we obtain a new proof of the blow-up invariance of the Hodge cohomology with modulus due to Kelly-Miyazaki \cite[Theorem 6.1]{Kelly-Miyazaki}.

\begin{definition}
    Let $L\in \Phi$ and let $t\in \mathcal{O}_L$ be a uniformizer.
    We write $\Omega^q(\mathcal{O}_L)(\logrm)$ for the subgroup of $\Omega^q(L)$ generated by $\Omega^q(\mathcal{O}_L)$ and $\Omega^{q-1}(\mathcal{O}_L)\wedge\dlog(t)$.
    Since $\dlog(u)\in \Omega^1(\mathcal{O}_L)$ for $u\in \mathcal{O}_L^\times$, this does not depend on the choice of $t$.
    We define the \emph{logarithmic filtration} on $\Omega^q(L)$ by
    $$
    \Fil_r \Omega^q(L)=\begin{cases}
        \Omega^q(\mathcal{O}_L)&(r=0)\\
        t^{-\ceil{r}+1}\cdot\Omega^q(\mathcal{O}_L)(\logrm)&(r>0).
    \end{cases}
    $$
    This is also independent of the choice of $t$.
\end{definition}

\begin{lemma}\label{omega_max}
	Let $L\in \Phi$ and let $t\in \mathcal{O}_L$ be a uniformizer.
	For $\omega\in \Omega^q(\mathcal{O}_L)$, the following conditions are equivalent:
        \begin{enumerate}
            \item $\omega\in t\cdot \Omega^q(\mathcal{O}_L)(\logrm)$.
            \item $\omega\wedge \dlog(t)\in \Omega^{q+1}(\mathcal{O}_L)$.
        \end{enumerate}
\end{lemma}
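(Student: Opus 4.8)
The plan is to reduce both conditions to a single explicit condition on $\omega$ by exploiting the free structure of the module of differentials of $\mathcal{O}_L$ relative to the uniformizer $t$, and then to compare summands.

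First I would recall the structure of $\Omega^1(\mathcal{O}_L)=\Omega^1_{\mathcal{O}_L/k}$. Since $\mathcal{O}_L\cong K\{t\}$ contains its residue field $K$ as a coefficient field and $K/k$ is separably generated ($k$ being perfect), the map $K[t]_{(t)}\to\mathcal{O}_L$ is ind-\'etale, so $\Omega^1(\mathcal{O}_L)$ is free over $\mathcal{O}_L$ on the basis $\{dy_i\}_i\cup\{dt\}$, where $\{y_i\}\subset K$ is chosen so that $\{dy_i\}$ is a $K$-basis of $\Omega^1_{K/k}$. Writing $\Omega^j_K:=\Omega^j_{K/k}\otimes_K\mathcal{O}_L$ for the ``horizontal'' part, exterior powers then give a direct sum decomposition
$$
\Omega^q(\mathcal{O}_L)=\Omega^q_K\oplus\bigl(\Omega^{q-1}_K\wedge dt\bigr),
$$
and the analogous decomposition of $\Omega^q(L)$ after inverting $t$. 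In particular $\wedge\,dt\colon\Omega^q_K\to\Omega^{q+1}(\mathcal{O}_L)$ is injective, and $\Omega^q(\mathcal{O}_L)$ is free, hence torsion-free and injecting into $\Omega^q(L)$, so all the submodule computations below take place inside $\Omega^q(L)$.

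Next I would make the two conditions explicit. Writing $\omega=\alpha+\beta\wedge dt$ with $\alpha\in\Omega^q_K$ and $\beta\in\Omega^{q-1}_K$, the relation $dt\wedge dt=0$ gives $\omega\wedge\dlog(t)=t^{-1}\,\alpha\wedge dt$, so by injectivity of $\wedge\,dt$ condition (2) holds if and only if $\alpha\in t\,\Omega^q_K$. For condition (1), the same vanishing shows $\Omega^{q-1}(\mathcal{O}_L)\wedge\dlog(t)=t^{-1}\Omega^{q-1}_K\wedge dt$, whence
$$
\Omega^q(\mathcal{O}_L)(\logrm)=\Omega^q_K\oplus\bigl(t^{-1}\Omega^{q-1}_K\wedge dt\bigr),\qquad
t\cdot\Omega^q(\mathcal{O}_L)(\logrm)=t\,\Omega^q_K\oplus\bigl(\Omega^{q-1}_K\wedge dt\bigr).
$$
Comparing with the decomposition $\omega=\alpha+\beta\wedge dt$ shows that condition (1) again amounts to $\alpha\in t\,\Omega^q_K$, the $\beta\wedge dt$ component being automatically in the second summand. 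The two conditions therefore coincide.

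The only real content is the structural input of the first paragraph; once the free decomposition and the injectivity of $\wedge\,dt$ are in hand, the equivalence is a one-line comparison of summands. The main point to watch is that these submodules are intrinsic, matching the independence of $t$ already noted in the definitions of $\Omega^q(\mathcal{O}_L)(\logrm)$ and $\Fil_r$: a different uniformizer differs from $t$ by a unit, so its $\dlog$ changes by an element of $\Omega^1(\mathcal{O}_L)$, and a different choice of the $y_i$ alters the horizontal basis by an invertible $\mathcal{O}_L$-matrix; neither affects $\Omega^q_K$, the submodules $t^k\Omega^q_K$, nor the condition $\alpha\in t\,\Omega^q_K$.
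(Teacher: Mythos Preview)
Your proof is correct and follows essentially the same route as the paper: both establish the decomposition $\Omega^q(\mathcal{O}_L)\simeq (\mathcal{O}_L\otimes_K\Omega^q_K)\oplus (\mathcal{O}_L\otimes_K\Omega^{q-1}_K)\wedge dt$ via $\mathcal{O}_L\cong K\{t\}$, write $\omega=\alpha+\beta\wedge dt$, and observe that $\omega\wedge\dlog(t)=t^{-1}\alpha\wedge dt$. The only cosmetic difference is that the paper proves $(1)\Rightarrow(2)$ by a direct manipulation without invoking the decomposition, whereas you reduce both directions symmetrically to the criterion $\alpha\in t\,\Omega^q_K$ after computing $t\cdot\Omega^q(\mathcal{O}_L)(\logrm)$ explicitly.
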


\begin{proof}
    Suppose that (1) holds.
    We can write $\omega = t\omega_1+\omega_2\wedge\mathrm{d}t$ where $\omega_1\in \Omega^q(\mathcal{O}_L)$, $\omega_2\in \Omega^{q-1}(\mathcal{O}_L)$.
    It follows that $\omega\wedge\dlog(t) = \omega_1\wedge \mathrm{d}t\in \Omega^{q+1}(\mathcal{O}_L)$, so (1) implies (2).
    
    Next suppose that (2) holds.
    Writing $K$ for the residue field of $\mathcal{O}_L$, we have $\mathcal{O}_L\simeq K\{t\}$.
    Recall that we have an isomorphism
    $$
        \Omega^1(K[t])\simeq (K[t]\otimes_K \Omega^1(K))\oplus K[t]\mathrm{d}t.
    $$
    By henselization, we get an isomorphism $\Omega^1(\mathcal{O}_L)\simeq \bigl(\mathcal{O}_L\otimes_K\Omega^1(K)\bigr)\oplus \mathcal{O}_L dt$ and hence
    $$
        \Omega^q(\mathcal{O}_L)\simeq \bigl(\mathcal{O}_L\otimes_K\Omega^q(K)\bigr)\oplus \bigl(\mathcal{O}_L\otimes_K\Omega^{q-1}(K)\bigr)\wedge \mathrm{d}t.
    $$
    If we write $\omega = \alpha+\beta\wedge \mathrm{d}t$, we have $\omega\wedge\dlog(t) = t^{-1}\alpha\wedge \mathrm{d}t$.
    By (2), we get $\alpha\in t\cdot \Omega^q(K)$ and hence $\omega\in t\cdot \Omega^q(\mathcal{O}_L)(\logrm)$.
    Therefore (2) implies (1).
\end{proof}

\begin{lemma}\label{omega_trace}
	Let $L\in \Phi$ and let $t\in \mathcal{O}_L$ be a uniformizer.
	Let $L'/L$ be a finite extension and let $t'\in \mathcal{O}_{L'}$ be a uniformizer.
	Then we have $\Tr_{L'/L}\bigl(t'\cdot \Omega(\mathcal{O}_{L'})(\logrm)\bigr)\subset t\cdot \Omega(\mathcal{O}_L)(\logrm)$.
\end{lemma}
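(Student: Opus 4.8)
The plan is to reduce the claim to the integrality criterion of \Cref{omega_max}. Fix a form $\eta\in t'\cdot\Omega^q(\mathcal{O}_{L'})(\logrm)$; unwinding the definition, $t'\cdot\Omega^q(\mathcal{O}_{L'})(\logrm)$ is generated by $t'\cdot\Omega^q(\mathcal{O}_{L'})$ and $\Omega^{q-1}(\mathcal{O}_{L'})\wedge\mathrm{d}t'$, so in particular $\eta$ is integral, i.e. $\eta\in\Omega^q(\mathcal{O}_{L'})$. To conclude that $\Tr_{L'/L}(\eta)\in t\cdot\Omega^q(\mathcal{O}_L)(\logrm)$ it then suffices, by \Cref{omega_max} applied over $L$ with the uniformizer $t$, to check that $\Tr_{L'/L}(\eta)\in\Omega^q(\mathcal{O}_L)$ and that $\Tr_{L'/L}(\eta)\wedge\dlog(t)\in\Omega^{q+1}(\mathcal{O}_L)$.

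Both verifications rest on two standard properties of the transfer on $\Omega^q$: that it preserves integral forms, $\Tr_{L'/L}\bigl(\Omega^j(\mathcal{O}_{L'})\bigr)\subset\Omega^j(\mathcal{O}_L)$, and the projection formula $\Tr_{L'/L}(\omega\wedge f^*\alpha)=\Tr_{L'/L}(\omega)\wedge\alpha$ for $f\colon\Spec\mathcal{O}_{L'}\to\Spec\mathcal{O}_L$ and $\alpha\in\Omega(\mathcal{O}_L)$. Integrality immediately gives $\Tr_{L'/L}(\eta)\in\Omega^q(\mathcal{O}_L)$. For the second condition, note that $t\in\mathcal{O}_L$ makes $\dlog(t)$ a pullback from $L$, so the projection formula yields $\Tr_{L'/L}(\eta)\wedge\dlog(t)=\Tr_{L'/L}\bigl(\eta\wedge\dlog(t)\bigr)$; by integrality it is then enough to show $\eta\wedge\dlog(t)\in\Omega^{q+1}(\mathcal{O}_{L'})$.

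To see this last point I would compare the two uniformizers. Writing $t=u\,(t')^{e}$ with $u\in\mathcal{O}_{L'}^\times$ and $e$ the ramification index, we have $\dlog(t)=\dlog(u)+e\,\dlog(t')$. Since $\dlog(u)\in\Omega^1(\mathcal{O}_{L'})$ and $\eta\in\Omega^q(\mathcal{O}_{L'})$, the term $\eta\wedge\dlog(u)$ is integral; and since $\eta\in t'\cdot\Omega^q(\mathcal{O}_{L'})(\logrm)$, \Cref{omega_max} over $L'$ with the uniformizer $t'$ gives $\eta\wedge\dlog(t')\in\Omega^{q+1}(\mathcal{O}_{L'})$. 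Adding the two contributions shows $\eta\wedge\dlog(t)\in\Omega^{q+1}(\mathcal{O}_{L'})$, which completes the argument.

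The only nontrivial inputs are the integrality and the projection formula for the categorical transfer $\Tr_{L'/L}$ on $\Omega^q\in\Sh_\Nis(\Cor)$; I expect confirming the projection formula against pullbacks at the level of these henselian local rings to be the main point requiring care, since once it and integrality are in hand the rest is the short computation above.
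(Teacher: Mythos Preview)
Your proposal is correct and follows essentially the same route as the paper: reduce via \Cref{omega_max} to showing $\Tr_{L'/L}(\eta)\wedge\dlog(t)\in\Omega^{q+1}(\mathcal{O}_L)$, pull $\dlog(t)$ inside the trace by the projection formula, expand $\dlog(t)=\dlog(u)+e\,\dlog(t')$, and check integrality of each piece using integrality of the trace. The only cosmetic difference is that where you invoke the implication (1)$\Rightarrow$(2) of \Cref{omega_max} over $L'$ to get $\eta\wedge\dlog(t')\in\Omega^{q+1}(\mathcal{O}_{L'})$, the paper writes out that one-line computation directly.
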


\begin{proof}
	Write $t=u(t')^e$ with $u\in \mathcal{O}_{L'}^\times$.
	For any $\omega\in \Omega^q(\mathcal{O}_{L'})(\logrm)$, we have
	\begin{align*}
		\Tr_{L'/L}(t' \omega)\wedge \dlog(t)
            &{}=\Tr_{L'/L}\bigl(t' \omega\wedge\dlog(u(t')^e)\bigr)\\
		&{}=\Tr_{L'/L}\bigl(t' \omega\wedge \dlog(u)\bigr)
            + e\cdot\Tr_{L'/L}\bigl(t' \omega\wedge \dlog(t')\bigr).
        \end{align*}
        We can write $\omega = \omega_1+\omega_2\wedge\dlog(t')$ with $\omega_1\in \Omega^q(\mathcal{O}_{L'})$, $\omega_2\in \Omega^{q-1}(\mathcal{O}_{L'})$.
        Then we have $t'\omega\wedge\dlog(t') = \omega_1\wedge \mathrm{d}t\in \Omega^{q+1}(\mathcal{O}_{L'})$, so we see that
        $$
		\Tr_{L'/L}(t' \omega)\wedge \dlog(t)\in \Tr_{L'/L}\bigl(\Omega^{q+1}(\mathcal{O}_{L'})\bigr)\subset \Omega^{q+1}(\mathcal{O}_L).
	$$
	By \Cref{omega_max}, this implies that $\Tr_{L'/L}(t' \omega)\in t\cdot \Omega^q(\mathcal{O}_L)(\logrm)$.
\end{proof}

\begin{lemma}\label{omega_trace_axiom}
	Let $L\in \Phi$ and let $L'/L$ be a finite extension of ramification index $e$.
	Then we have $\Tr_{L'/L}\bigl(\Fil_r\Omega^q(L')\bigr)\subset \Fil_{r/e}\Omega^q(L)$ for any $r\in \mathbb{Q}_{\geq 0}$.
    In other words, $\{\Fil_r\Omega^q(L)\}_{r\in \mathbb{Q}_{\geq 0},L\in \Phi}$ defines a ramification filtration on $\Omega^q$.
\end{lemma}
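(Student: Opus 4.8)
The plan is to verify the two axioms of \Cref{def:ram_fil} directly, the trace compatibility being the only nontrivial point. Axiom (1) is immediate from the definition, since the image of $\Omega^q(\mathcal{O}_L)\to \Omega^q(L)$ is by construction equal to $\Fil_0\Omega^q(L)$. For axiom (2) I would fix uniformizers $t\in \mathcal{O}_L$ and $t'\in \mathcal{O}_{L'}$, write $t=u(t')^e$ with $u\in \mathcal{O}_{L'}^\times$, and treat the cases $r=0$ and $r>0$ separately. The case $r=0$ reduces to the integrality of the differential trace, namely $\Tr_{L'/L}\bigl(\Omega^q(\mathcal{O}_{L'})\bigr)\subset \Omega^q(\mathcal{O}_L)=\Fil_0\Omega^q(L)$; this is the standard fact already invoked in the proof of \Cref{omega_trace}.

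For $r>0$, set $n=\ceil{r}$ and $m=\ceil{r/e}$, so that by definition $\Fil_r\Omega^q(L')=(t')^{-n+1}\Omega^q(\mathcal{O}_{L'})(\logrm)$ and $\Fil_{r/e}\Omega^q(L)=t^{-m+1}\Omega^q(\mathcal{O}_L)(\logrm)$. The key arithmetic input is the nested-ceiling identity $\ceil{\ceil{r}/e}=\ceil{r/e}$ for the positive integer $e$, which gives $m=\ceil{n/e}$ and hence $em\geq n$. Using $(t')^{-em}=u^m t^{-m}$, I would rewrite an arbitrary generator as
$$
    (t')^{-n+1}\omega=t^{-m}\cdot u^m(t')^{em-n+1}\omega,\qquad \omega\in\Omega^q(\mathcal{O}_{L'})(\logrm).
$$
Since $em-n+1\geq 1$ and $\Omega^q(\mathcal{O}_{L'})(\logrm)$ is an $\mathcal{O}_{L'}$-submodule of $\Omega^q(L')$, the factor $u^m(t')^{em-n+1}\omega$ lies in $t'\cdot\Omega^q(\mathcal{O}_{L'})(\logrm)$, so \Cref{omega_trace} applies to it. Extracting the scalar $t^{-m}\in L$ through the trace by the projection formula (the $\Omega^\bullet(L)$-linearity of $\Tr_{L'/L}$, as already used in the proof of \Cref{omega_trace}), I obtain
$$
    \Tr_{L'/L}\bigl((t')^{-n+1}\omega\bigr)=t^{-m}\,\Tr_{L'/L}\bigl(u^m(t')^{em-n+1}\omega\bigr)\in t^{-m}\cdot t\,\Omega^q(\mathcal{O}_L)(\logrm)=\Fil_{r/e}\Omega^q(L),
$$
which is exactly the desired inclusion.

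The substantive input is \Cref{omega_trace}; everything else is bookkeeping designed to present an element of $\Fil_r\Omega^q(L')$ as $t^{-m}$ times something in $t'\cdot\Omega^q(\mathcal{O}_{L'})(\logrm)$. The one point that must be checked on the nose is that the exponent $\ceil{n/e}$ produced by the computation coincides with the target exponent $m=\ceil{r/e}$; this is guaranteed precisely by the identity $\ceil{\ceil{r}/e}=\ceil{r/e}$, and without this matching the asserted inclusion could fail. I expect this ceiling identity, together with confirming the $\mathcal{O}_{L'}$-module structure of $\Omega^q(\mathcal{O}_{L'})(\logrm)$, to be the only place demanding any attention, the rest following formally from the earlier lemmas.
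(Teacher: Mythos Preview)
Your proposal is correct and follows essentially the same route as the paper: reduce to \Cref{omega_trace} after pulling out the power $t^{-m}$ via the projection formula, using the inequality $em\geq n$ with $n=\ceil{r}$, $m=\ceil{r/e}$. The only cosmetic difference is that the paper obtains $e\ceil{r/e}\geq \ceil{r}$ directly (since $e\ceil{r/e}\geq r$ is an integer), rather than via the nested-ceiling identity you invoke; the identity is true but not needed at full strength here.
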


\begin{proof}
    We may assume that $r>0$.
    Take uniformizers $t\in \mathcal{O}_L$, $t'\in \mathcal{O}_{L'}$ and write $t=u(t')^e$ with $u\in \mathcal{O}_{L'}^\times$.
    Let $\omega\in \Fil_r \Omega^q(L')$.
    Then we have $t'^{\ceil{r}}\omega\in t'\cdot \Omega^q(\mathcal{O}_{L'})(\logrm)$.
    Since $e\ceil{r/e}\geq \ceil{r}$, this implies $t'^{e\ceil{r/e}}\omega\in t'\cdot \Omega^q(\mathcal{O}_{L'})(\logrm)$.
    Therefore we have
	\begin{align*}
		t^{\ceil{r/e}}\Tr_{L'/L}(\omega)&{}=\Tr_{L'/L}(u^{\ceil{r/e}}t'^{e\ceil{r/e}}\omega)\\
		&{}\in \Tr_{L'/L}\bigl(t'\cdot \Omega^q(\mathcal{O}_{L'})(\logrm)\bigr)\\
            &{}\subset t\cdot \Omega^q(\mathcal{O}_L)(\logrm).
	\end{align*}
    Here, we used \Cref{omega_trace} for the last inclusion.
    This shows that $\Tr_{L'/L}(\omega)\in \Fil_{r/e}\Omega^q(L)$.
\end{proof}

We write $\MOmega^q$ for the Nisnevich sheaf on $\MCor^\mathbb{Q}$ associated to the logarithmic filtration on $\Omega^q$.
The next lemma shows that this coincides with Kelly-Miyazaki's definition for $\ch(k)=0$:

\begin{lemma}\label{omega_log_formula}
	For any $\mathcal{X}\in \MCor^{\mathbb{Q},\ls}$, we have
	$$
		\MOmega^q(\mathcal{X})=\Gamma\bigl(X,\Omega^q_X(\log |D_X|)(\ceil{D_X}-|D_X|)\bigr).
	$$
	In particular, $\MOmega^q$ has left continuity and affine vanishing property.
\end{lemma}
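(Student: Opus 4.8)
The plan is to establish the displayed formula first; the two listed properties then fall out directly. Since $\MOmega^q$ is a Nisnevich sheaf by \Cref{lem:ram_fil} and the right-hand side is manifestly one, the identity may be checked Zariski-locally, so I reduce to the chart $X=\Spec K\{x_1,\dots,x_d\}$ with $|D_X|=\{x_1\cdots x_s=0\}$ and $D_X=\sum_{i=1}^s a_i[x_i=0]$, $a_i\in\mathbb{Q}_{>0}$. In this chart $\Omega^q_X(\log|D_X|)(\ceil{D_X}-|D_X|)$ is free over $\mathcal{O}_X$ on the monomials $\prod_{i=1}^s x_i^{-(\ceil{a_i}-1)}\eta_I$, where $\eta_I$ runs over the wedges of the $\dlog x_i$ ($i\le s$) and the $dx_j$. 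Recalling that $\MOmega^q(\mathcal{X})$ is by definition the group of $\omega\in\Omega^q(X^\circ)$ bounded by $D_X$ for the logarithmic filtration, I prove the formula as a two-sided inclusion inside $\Omega^q(X^\circ)$.

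For the inclusion $\supseteq$ I must check that each such $\omega$ is bounded; by $\mathcal{O}_X$-linearity and the fact that every $\Fil_r\Omega^q(L)$ is an $\mathcal{O}_L$-submodule, it suffices to treat the monomial generators. Given a diagram as in \Cref{DVR_diagram}, put $n_i=v_L(\widetilde\rho^*x_i)\ge 0$, so $r:=v_L(\widetilde\rho^*D_X)=\sum_{i=1}^s a_in_i$. Pulling back sends $\dlog x_i$ into $\Omega^1(\mathcal{O}_L)(\logrm)$ and $x_i^{-(\ceil{a_i}-1)}$ to a unit times $t^{-n_i(\ceil{a_i}-1)}$, so $\rho^*\omega\in t^{-\sum_i n_i(\ceil{a_i}-1)}\Omega^q(\mathcal{O}_L)(\logrm)$, and membership in $\Fil_r\Omega^q(L)=t^{-\ceil{r}+1}\Omega^q(\mathcal{O}_L)(\logrm)$ reduces to the elementary inequality
$$
\sum_{i=1}^s n_i(\ceil{a_i}-1)\le \ceil{\textstyle\sum_{i=1}^s a_in_i}-1.
$$
This holds because the left-hand side is an integer and, using $\ceil{a_i}-1<a_i$, is strictly smaller than $\sum_i a_in_i\le\ceil{r}$ whenever some $n_i>0$ (the case $r=0$ amounts to regularity on $X^\circ$). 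This arbitrary-valuation estimate is the one genuinely computational point, and I regard it as the main obstacle. For the reverse inclusion $\subseteq$ I test boundedness only at the divisorial valuations $L_i=\Frac \mathcal{O}^h_{X,\xi_i}$ attached to the generic points $\xi_i$ of $\{x_i=0\}$, where the uniformizer is $x_i$ and the modulus is $a_i$; boundedness there says precisely that $x_i^{\ceil{a_i}-1}\omega$ has at worst logarithmic poles along $\{x_i=0\}$. Since the right-hand side is locally free on the smooth scheme $X$ and $\omega$ is regular at every remaining codimension-one point (all of which lie in $X^\circ$), the $S_2$ property forces $\omega$ to be a global section, giving $\subseteq$.

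The two properties now follow from the formula. For left continuity, $\MOmega^q(X,(1-\varepsilon)D_X)=\Gamma(X,\Omega^q_X(\log|D_X|)(\ceil{(1-\varepsilon)D_X}-|D_X|))$, and for all sufficiently small $\varepsilon>0$ one has $|(1-\varepsilon)D_X|=|D_X|$ and $\ceil{(1-\varepsilon)a_i}=\ceil{a_i}$ for every $i$; hence the transition maps in the colimit are eventually the identity and $\colim_{\varepsilon\to0}\MOmega^q(X,(1-\varepsilon)D_X)\to\MOmega^q(\mathcal{X})$ is an isomorphism. For the affine vanishing property, the pair $\mathcal{Y}=\mathcal{X}\otimes(\mathbb{A}^1,E_1)\otimes\cdots\otimes(\mathbb{A}^1,E_n)$ is again log-smooth, so by the formula $(\MOmega^q)_\mathcal{Y}$ is a coherent (indeed locally free) sheaf on $Y=X\times(\mathbb{A}^1)^n$; since $\pr_1\colon Y\to X$ is an affine morphism, $\mathrm{R}^i\pr_{1,*}(\MOmega^q)_\mathcal{Y}=0$ for $i>0$, as required.
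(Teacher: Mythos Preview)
Your proof is correct and follows essentially the same approach as the paper: the same local chart, the same reduction to the integer inequality $\sum_i n_i(\ceil{a_i}-1)\le \ceil{\sum_i a_i n_i}-1$ (proved via $\ceil{a_i}-1<a_i$), and the same appeal to quasi-coherence for the affine vanishing property. The only difference is emphasis: the paper declares the inclusion $\subseteq$ ``clear'' and spells out $\supseteq$, whereas you additionally justify $\subseteq$ by testing at the divisorial valuations $L_i$ and invoking reflexivity.
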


\begin{proof}
        It is clear that the left hand side is contained in the right hand side.
        Let us prove the reverse inclusion.
        We may assume that $\mathcal{X}$ has a coordinate $x_1,\dots,x_d$ such that $D_X=\sum_{i=1}^s r_i D_i$ where $D_i=\{x_i=0\}$.
        Let $\omega\in \Gamma\bigl(X,\Omega^q_X(\log |D_X|)(\ceil{D_X}-|D_X|)\bigr)$.
        Let $L\in \Phi$ and let $t\in \mathcal{O}_L$ be a uniformizer.
        Suppose that we have a commutative diagram of the form \Cref{DVR_diagram}.
	We set $e_i=v_L(\rho^*x_i)$ and assume that $e_i>0$ for some $i$.
	Then there is some unit $u\in \mathcal{O}_L^\times$ such that
	$$
	\rho^*(x_1^{\ceil{r_1}-1}\cdots x_s^{\ceil{r_s}-1}\omega)
        = u t^{\sum_{i=1}^s e_i(\ceil{r_i}-1)}\rho^*\omega.
        $$
        The left hand side is an element of $\Omega^q(\mathcal{O}_L)(\logrm)$, so we get
        $$
            t^{\sum_{i=1}^s e_i(\ceil{r_i}-1)}\rho^*\omega \in \Omega^q(\mathcal{O}_L)(\logrm).
        $$
	Now we observe that the inequality $\sum_{i=1}^s e_i(\ceil{r_i}-1)<\sum_{i=1}^s e_i r_i\leq \ceil{\sum_{i=1}^s e_i r_i}$ implies $\sum_{i=1}^s e_i(\ceil{r_i}-1)\leq \ceil{\sum_{i=1}^s e_i r_i}-1$.
        Therefore we get
	$$
		t^{\ceil{\sum_{i=1}^s e_i r_i}-1}\rho^*\omega\in \Omega^q(\mathcal{O}_L)(\logrm).
	$$
	This shows that $\omega\in \MOmega^q(\mathcal{X})$ and hence the equality holds.
    The left continuity is clear from the equality, and the affine vanishing property follows from the fact that the sheaf $\Omega^q_X(\log |D_X|)(\ceil{D_X}-|D_X|)$ is a quasi-coherent $\mathcal{O}_X$-module.
\end{proof}

\begin{lemma}\label{omega_CCI}
	The modulus sheaf $\MOmega^q$ has cohomological cube invariance.
\end{lemma}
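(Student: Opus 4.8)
The goal is to show that for any $\mathcal{X}\in \MCor^{\mathbb{Q},\ls}$, the pullback along the first projection induces an isomorphism $\RGamma(X,\MOmega^q_\mathcal{X})\xrightarrow{\sim}\RGamma(X\times\mathbb{P}^1,\MOmega^q_{\mathcal{X}\otimes\bcube})$. By \Cref{omega_log_formula} we have an explicit coherent description of the sheaf on any object of $\MCor^{\mathbb{Q},\ls}$, namely $\MOmega^q_\mathcal{X}=\Omega^q_X(\log|D_X|)(\ceil{D_X}-|D_X|)$, so the plan is to reduce the statement to a concrete computation of coherent cohomology on $\mathbb{P}^1$-bundles, where everything is accessible.

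First I would unwind the modulus structure on $\mathcal{X}\otimes\bcube$. Since $\bcube=(\mathbb{P}^1,[\infty])$, we have $\mathcal{X}\otimes\bcube=(X\times\mathbb{P}^1,\pr_1^*D_X+\pr_2^*[\infty])$, and its support is $|D_X|\times\mathbb{P}^1\cup X\times\{\infty\}$, which has simple normal crossings. Applying the formula of \Cref{omega_log_formula} to both sides, the task becomes comparing $\RGamma(X,\Omega^q_X(\log|D_X|)(\ceil{D_X}-|D_X|))$ with the pushforward along $\pr_1\colon X\times\mathbb{P}^1\to X$ of $\Omega^q_{X\times\mathbb{P}^1}(\log(|D_X|\times\mathbb{P}^1+X\times\infty))(\ceil{\pr_1^*D_X}-|D_X|\times\mathbb{P}^1)$. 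Using the residue/logarithmic short exact sequences that decompose $\Omega^q$ of a product into $\pr_1^*\Omega^\bullet_X\otimes\pr_2^*\Omega^\bullet_{\mathbb{P}^1}$-pieces, the key point is that the log-differentials along the fiber $\mathbb{P}^1$ with pole bounded by $[\infty]$ contribute $\Omega^0_{\mathbb{P}^1}(\log\infty)=\mathcal{O}_{\mathbb{P}^1}$ and $\Omega^1_{\mathbb{P}^1}(\log\infty)\cong\mathcal{O}_{\mathbb{P}^1}$, and one must show that the higher pushforward $\mathrm{R}^1\pr_{1,*}$ of each relevant graded piece vanishes while $\pr_{1,*}$ recovers exactly $\MOmega^q_\mathcal{X}$.

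The technical heart reduces to the fiberwise statement $\RGamma(\mathbb{P}^1,\mathcal{O}_{\mathbb{P}^1})=k$ and $\RGamma(\mathbb{P}^1,\Omega^1_{\mathbb{P}^1}(\log\infty))=\RGamma(\mathbb{P}^1,\mathcal{O}_{\mathbb{P}^1})=k$ (the latter because $\Omega^1_{\mathbb{P}^1}=\mathcal{O}(-2)$ and twisting by $[\infty]$ gives $\mathcal{O}(-1)$, but allowing a \emph{log} pole at $\infty$ gives $\mathcal{O}$), together with the fact that the unit map identifies the $q=0$, degree-zero-in-the-fiber piece with $\MOmega^q_\mathcal{X}$. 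More precisely, I expect a filtration of $\Omega^q_{X\times\mathbb{P}^1}(\log\cdots)$ whose graded pieces are $\pr_1^*\bigl(\Omega^q_X(\log|D_X|)(\ceil{D_X}-|D_X|)\bigr)\otimes\pr_2^*\mathcal{O}_{\mathbb{P}^1}$ and $\pr_1^*\bigl(\Omega^{q-1}_X(\log|D_X|)(\ceil{D_X}-|D_X|)\bigr)\otimes\pr_2^*\Omega^1_{\mathbb{P}^1}(\log\infty)$. By the projection formula and the cohomology of $\mathbb{P}^1$ just described, the first piece pushes forward to $\MOmega^q_\mathcal{X}$ with no higher cohomology, and the second piece pushes forward to $\MOmega^{q-1}_\mathcal{X}$; the point is that the connecting differential $d\colon \mathcal{O}_{\mathbb{P}^1}\to\Omega^1_{\mathbb{P}^1}(\log\infty)$ on global sections is an isomorphism onto the relevant subspace, so the extra $\MOmega^{q-1}_\mathcal{X}$ contribution is killed and one is left with exactly $\MOmega^q_\mathcal{X}$.

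The main obstacle I anticipate is bookkeeping the ceiling function $\ceil{\pr_1^*D_X}$ correctly across the product: since $\pr_2^*[\infty]$ is an honest integral divisor added to the $\mathbb{Q}$-divisor $\pr_1^*D_X$, one must verify that $\ceil{\pr_1^*D_X+[\infty]}-|D_X\times\mathbb{P}^1+X\times\infty|$ decomposes compatibly so that the fiber direction really contributes only $\mathcal{O}_{\mathbb{P}^1}$ and $\Omega^1_{\mathbb{P}^1}(\log\infty)$ and not some positive twist that would create higher cohomology or destroy the identification. Concretely, I would check on an affine chart with a coordinate that $\ceil{\pr_1^*D_X}=\pr_1^*\ceil{D_X}$ (the ceiling only acts on the $X$-coordinates, the $\mathbb{P}^1$-coordinate being untouched), after which the splitting is clean and the vanishing of $\mathrm{R}^1\pr_{1,*}$ follows from the quasi-coherence already invoked in \Cref{omega_log_formula} together with $\mathrm{H}^1(\mathbb{P}^1,\mathcal{O})=0$.
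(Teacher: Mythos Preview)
Your overall strategy is the same as the paper's: use \Cref{omega_log_formula} to identify $(\MOmega^q)_{\mathcal{X}\otimes\bcube}$ explicitly, split it via a K\"unneth-type decomposition for log differentials on $X\times\mathbb{P}^1$, and compute the fiberwise cohomology of each piece. That is exactly what the paper does.

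However, there is a genuine computational error that derails the execution. You claim that $\Omega^1_{\mathbb{P}^1}(\log\infty)\cong\mathcal{O}_{\mathbb{P}^1}$, reasoning that after the twist $\Omega^1_{\mathbb{P}^1}([\infty])=\mathcal{O}(-1)$ the log pole adds another $+1$. But on a curve one has $\Omega^1(\log D)=\Omega^1(D)$ for a reduced divisor $D$; the log pole \emph{is} the twist, not something on top of it. Hence
\[
\Omega^1_{\mathbb{P}^1}(\log\infty)=\Omega^1_{\mathbb{P}^1}([\infty])\cong\mathcal{O}_{\mathbb{P}^1}(-1),
\]
exactly as stated in the paper's proof (this is also consistent with the formula in \Cref{omega_log_formula}, since for $\bcube$ one has $\ceil{D_X}-|D_X|=0$). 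Consequently $\RGamma(\mathbb{P}^1,\Omega^1_{\mathbb{P}^1}(\log\infty))=\RGamma(\mathbb{P}^1,\mathcal{O}(-1))=0$, and the second graded piece contributes nothing at all. The paper's argument finishes in one line at this point.

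Because of the miscomputation, you introduce a spurious extra summand $\MOmega^{q-1}_\mathcal{X}$ and then try to kill it by invoking ``the connecting differential $d\colon\mathcal{O}_{\mathbb{P}^1}\to\Omega^1_{\mathbb{P}^1}(\log\infty)$''. This mechanism does not exist here: we are computing the derived pushforward of a single coherent sheaf, not the hypercohomology of a de Rham complex, so there is no exterior derivative in play to cancel anything. Once you fix the line bundle to $\mathcal{O}(-1)$, this entire paragraph becomes unnecessary and the proof goes through cleanly, matching the paper's.
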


\begin{proof}
	This follows from \Cref{omega_log_formula} and \cite[Corollary 5.2]{KellyMiyazaki_Hodge2}.
	We sketch the proof for the convenience of the reader.
	Let $\mathcal{X}\in \MCor^{\mathbb{Q},\ls}$.
	An explicit computation shows that, as a quasi-coherent sheaf on $X\times\mathbb{P}^1$, we have
	\begin{align*}
		(\MOmega^q)_{\mathcal{X}\otimes\bcube}&{}\simeq \pr_1^*(\MOmega^q)_\mathcal{X}\oplus\bigl(\pr_1^*(\MOmega^{q-1})_\mathcal{X}\otimes\pr_2^*(\MOmega^1)_\bcube\bigr)\\
		&{}\simeq \pr_1^*(\MOmega^q)_\mathcal{X}\oplus\bigl(\pr_1^*(\MOmega^{q-1})_\mathcal{X}\otimes\pr_2^*\mathcal{O}_{\mathbb{P}^1}(-1)\bigr)
	\end{align*}
	Therefore it suffices to show that for any finite locally free $\mathcal{O}_X$-modules $E,E'$ on $X$, the morphism
        $$
            \RGamma (X,E) \to \RGamma\bigl(X\times\mathbb{P}^1, \pr_1^*E\oplus(\pr_1^*E'\otimes \pr_2^*\mathcal{O}_{\mathbb{P}^1}(-1))\bigr)
        $$
        is an isomorphism.
	This can be reduced to the case $E=E'=\mathcal{O}_X$, which is easy.
\end{proof}

\begin{corollary}[{\cite[Theorem 6.1]{KellyMiyazaki_Hodge2}}]\label{Omega_CBI}
    Let $\mathcal{X}\in \MCor^{\mathbb{Q},\ls}$ and let $Z\subset |D_X|$ be a smooth closed subscheme which has simple normal crossings with $|D_X|$.
    Then the morphism
    $$
        \RGamma(X,\MOmega^q_\mathcal{X})\to \RGamma(\Bl_ZX,\MOmega^q_{\Bl_Z\mathcal{X}})
    $$
    is an isomorphism.
\end{corollary}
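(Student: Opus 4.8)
The plan is to obtain this as an immediate application of \Cref{main}. That theorem applies to any Nisnevich sheaf on $\MCor^\mathbb{Q}$ possessing cohomological cube invariance, the affine vanishing property, and left continuity, so the entire task reduces to verifying that $\MOmega^q$ enjoys all three of these properties---and each has already been secured in the lemmas above.

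Concretely, I would first appeal to \Cref{omega_log_formula}, which gives the closed formula
$$
\MOmega^q(\mathcal{X})=\Gamma\bigl(X,\Omega^q_X(\log|D_X|)(\ceil{D_X}-|D_X|)\bigr)
$$
for $\mathcal{X}\in \MCor^{\mathbb{Q},\ls}$ and which extracts from this description both the left continuity (the right-hand side varies compatibly as the coefficients of $D_X$ are perturbed toward their ceilings) and the affine vanishing property (the displayed sheaf is a quasi-coherent $\mathcal{O}_X$-module, hence has no higher cohomology along affine projections). I would then cite \Cref{omega_CCI} for the cohomological cube invariance.

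Having verified all three hypotheses, I would apply \Cref{main} with $F=\MOmega^q$ to conclude that the morphism $\RGamma(X,\MOmega^q_\mathcal{X})\to \RGamma(\Bl_ZX,\MOmega^q_{\Bl_Z\mathcal{X}})$ is an isomorphism. Since every substantive ingredient has already been carried out, there is no genuine obstacle here; the proof is purely a matter of assembly. The one background point worth recording is that $\MOmega^q$ is indeed a Nisnevich sheaf on $\MCor^\mathbb{Q}$---guaranteed by its construction from the logarithmic ramification filtration together with \Cref{lem:ram_fil}---so that \Cref{main} is applicable in the first place.
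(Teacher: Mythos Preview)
Your proposal is correct and follows essentially the same approach as the paper: the paper's proof simply invokes \Cref{omega_CCI} for cohomological cube invariance and \Cref{omega_log_formula} for left continuity and the affine vanishing property, then applies \Cref{main}. Your additional remark that $\MOmega^q$ is a Nisnevich sheaf on $\MCor^\mathbb{Q}$ via \Cref{lem:ram_fil} is a reasonable background observation but is already implicit in the paper's construction.
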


\begin{proof}
    Since $\MOmega^q$ has cohomological cube invariance by \Cref{omega_CCI} and has left continuity and affine vanishing property by \Cref{omega_log_formula}, the claim follows from \Cref{main}.
\end{proof}

\subsection{Witt vectors}
In this subsection we assume that $\ch(k)=p>0$.
Fix an integer $n>0$.
The sheaf of $p$-typical Witt vectors $\mathrm{W}_n=\bigl(X\mapsto \Gamma(X,\mathrm{W}_n\mathcal{O}_X)\bigr)$ is representable by an algebraic group, so it can be regarded as an object of $\Sh_\Nis(\Cor)$ \cite[Lemma 1.4.4]{BVK16}.
In this subsection, we construct a modulus sheaf $\MW_n\in \Sh_\Nis(\MCor^{\mathbb{Q}})$ extending $\mathrm{W}_n$ using the Brylinski-Kato filtration.
We prove that $\mathrm{W}_n$ has cohomological cube invariance, affine vanishing property, and left continuity.
As a corollary of our main theorem, we obtain the blow-up invariance of the Witt vector cohomology with modulus.

\begin{definition}
	Let $L\in \Phi$ and let $t\in \mathcal{O}_L$ be a uniformizer.
	We define the \emph{Brylinski-Kato filtration} \cite[Definition 3.1]{Kato89} on $\mathrm{W}_n(L)$ by
	$$
		\Fil_r \mathrm{W}_n(L)=\begin{cases}
			\mathrm{W}_n(\mathcal{O}_L)&(r=0)\\
			\bigl\{a\in \mathrm{W}_n(L)\bigm| [t^{\ceil{r}-1}]\mathrm{F}^{n-1}(a)\in \mathrm{W}_n(\mathcal{O}_L)\bigr\}&(r>0).
		\end{cases}
	$$
        Here, $[a]$ means $(a,0,\dots,0)$ and $\mathrm{F}$ denotes the Frobenius map $(a_0,\dots,a_{n-1})\mapsto (a_0^p,\dots,a_{n-1}^p)$.
	This definition does not depend on the choice of the uniformizer $t$.
\end{definition}

\begin{remark}\label{Witt_filtration_valuation}
	For $a=(a_0,\dots,a_{n-1})\in \mathrm{W}_n(L)$, we have
	$$
		[t^{\ceil{r}-1}]\mathrm{F}^{n-1}(a)=(t^{\ceil{r}-1}a_0^{p^{n-1}},t^{p(\ceil{r}-1)}a_1^{p^{n-1}},\dots,t^{p^{n-1}(\ceil{r}-1)}a_{n-1}^{p^{n-1}}).
	$$
	Therefore we have $a\in \Fil_r \mathrm{W}_n(L)$ if and only if
	$$
		p^{n-1}v_L(a_i) +  p^i (\ceil{r}-1) \geq 0
	$$
	holds for $i=0,1,\dots,n-1$.
	In particular, we have $\Fil_1\mathrm{W}_n(L)=\Fil_0\mathrm{W}_n(L)=\mathrm{W}_n(\mathcal{O}_L)$.
\end{remark}

\begin{lemma}\label{Witt_trace}
	Let $L\in \Phi$.
	Let $L'/L$ be a finite extension and $t'\in \mathcal{O}_{L'}$ be a uniformizer.
	If $\overline{L}$ is an algebraic closure of $L$, then we have
	$$i_{\overline{L}/L}\circ \Tr_{L'/L}\bigl([t']\cdot \mathrm{W}_n(\mathcal{O}_{L'})\bigr)\subset [t']\cdot \mathrm{W}_n(\mathcal{O}_{\overline{L}})$$
	where $i_{\overline{L}/L}\colon \mathrm{W}_n(\mathcal{O}_L)\to \mathrm{W}_n(\mathcal{O}_{\overline{L}})$ is the inclusion.
\end{lemma}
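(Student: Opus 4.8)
The plan is to pass to the algebraic closure, where the transfer becomes an integer combination of Galois conjugates, and to reduce everything to the stability of one explicit subgroup under Witt addition; this is the Witt-vector counterpart of the computation in \Cref{omega_trace}.

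First I would record a reformulation of both sides. By the Teichm\"uller multiplication formula $[a]\cdot(b_0,\dots,b_{n-1})=(ab_0,a^pb_1,\dots,a^{p^{n-1}}b_{n-1})$, for any valued field $M$ containing $t'$, with valuation $w$ and $\lambda:=w(t')>0$, one has
\[
    [t']\cdot \mathrm{W}_n(\mathcal{O}_M)=S_\lambda(M):=\bigl\{(y_0,\dots,y_{n-1})\in \mathrm{W}_n(M)\bigm| w(y_i)\geq p^i\lambda\ (0\le i\le n-1)\bigr\}.
\]
The crucial structural point is that $S_\lambda(M)$ is a \emph{subgroup} of $\mathrm{W}_n(M)$: the universal polynomial computing the $i$-th coordinate of a Witt sum (or difference) is isobaric of weight $p^i$ once the variable in position $j$ is assigned weight $p^j$, so every one of its monomials has $w$-valuation at least $p^i\lambda$ whenever all inputs lie in $S_\lambda(M)$. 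Applying this with $M=\overline L$, $w=v_{\overline L}$ and $\lambda=v_{\overline L}(t')=1/e$, where $e$ is the ramification index of $L'/L$, the target $[t']\cdot\mathrm{W}_n(\mathcal{O}_{\overline L})$ is exactly $S_\lambda(\overline L)$; moreover $S_\lambda(\overline L)$ is stable under every $\mathcal{O}_L$-embedding $\sigma\colon\overline L\to\overline L$, since such $\sigma$ preserves the (unique) valuation $v_{\overline L}$.

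The main step, and the part where I expect the real work to lie, is a base-change formula for the transfer: as maps $\mathrm{W}_n(L')\to\mathrm{W}_n(\overline L)$ I claim
\[
    i_{\overline L/L}\circ\Tr_{L'/L}=\sum_{\sigma}m_\sigma\,\mathrm{W}_n(\sigma),
\]
where $\sigma$ runs over the finitely many $L$-embeddings $L'\hookrightarrow\overline L$, the $m_\sigma\in\mathbb{Z}_{>0}$ are the corresponding multiplicities, and $\mathrm{W}_n(\sigma)$ is the coordinatewise action of $\sigma$. This should follow from the compatibility of transfers in $\Sh_\Nis(\Cor)$ with base change: the transpose of the graph of $\Spec\mathcal{O}_{L'}\to\Spec\mathcal{O}_L$ pulls back along $\Spec\overline L\to\Spec L$ to the $0$-cycle attached to $\Spec(L'\otimes_L\overline L)$, whose reduction is $\coprod_\sigma\Spec\overline L$ with multiplicities $m_\sigma$ (the sum is literally over the conjugates in the separable case, and a single embedding with multiplicity equal to the inseparable degree in the purely inseparable case). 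Making this precise---in particular accounting for the possibly non-reduced fibre and identifying the multiplicities---is the main obstacle.

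Granting the formula, the conclusion is immediate and uses only the subgroup property. Let $x\in[t']\cdot\mathrm{W}_n(\mathcal{O}_{L'})$, so $v_{L'}(x_i)\geq p^i$ for all $i$. Since every embedding $\sigma$ induces on $L'$ the same valuation $\tfrac1e v_{L'}$, each coordinate of $\mathrm{W}_n(\sigma)(x)$ has $v_{\overline L}$-valuation at least $p^i/e=p^i\lambda$, whence $\mathrm{W}_n(\sigma)(x)\in S_\lambda(\overline L)$. As $S_\lambda(\overline L)$ is a subgroup, the integer combination $\sum_\sigma m_\sigma\,\mathrm{W}_n(\sigma)(x)=i_{\overline L/L}\Tr_{L'/L}(x)$ again lies in $S_\lambda(\overline L)=[t']\cdot\mathrm{W}_n(\mathcal{O}_{\overline L})$, as required. (Passing to $\overline L$ is genuinely needed: the sharp bound $v_{\overline L}\geq p^i/e$ involves the fractional value $1/e$ of $t'$, which has no counterpart over $L$ itself, and this is precisely what will later produce the division by $e$ in the trace axiom for the Brylinski--Kato filtration.)
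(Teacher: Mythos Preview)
Your approach is essentially the same as the paper's: the paper writes the base-change identity directly as $i_{\overline{L}/L}\circ \Tr_{L'/L}([t']\cdot a) = [L':L]_{\mathrm{insep}}\cdot\sum_{i=1}^{m}\sigma_i([t']\cdot a)$, where $\sigma_1,\dots,\sigma_m$ are the distinct $L$-embeddings $L'\hookrightarrow\overline{L}$, and then simply observes that the right-hand side lies in $[t']\cdot \mathrm{W}_n(\mathcal{O}_{\overline{L}})$. So what you flagged as the main obstacle is treated as a one-line standard fact, with all your multiplicities $m_\sigma$ equal to the inseparable degree of $L'/L$; your explicit verification that $[t']\cdot \mathrm{W}_n(\mathcal{O}_{\overline{L}})$ is a subgroup via the isobaric property is left implicit in the paper.
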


\begin{proof}
	Let $a\in \mathrm{W}_n(\mathcal{O}_{L'})$.
	If $m$ (resp. $n$) is the separable (resp. inseparable) degree of $L'/L$, then there are $m$ distinct $L$-embeddings $\sigma_1,\dots,\sigma_m\colon L'\to \overline{L}$ and we have
	$$
		i_{\overline{L}/L}\circ \Tr_{L'/L}([t']\cdot a)=n\cdot \sum_{i=1}^m\sigma_i([t']\cdot a).
	$$
	It is clear that the right hand side is contained in $[t']\cdot \mathrm{W}_n(\mathcal{O}_{\overline{L}})$.
\end{proof}

\begin{lemma}\label{Witt_trace_axiom}
	Let $L\in \Phi$ and let $L'/L$ be a finite extension of ramification index $e$.
	Then we have $\Tr_{L'/L}(\Fil_r \mathrm{W}_n(L'))\subset \Fil_{r/e}\mathrm{W}_n(L)$ for any $r\in \mathbb{Q}_{\geq 0}$.
\end{lemma}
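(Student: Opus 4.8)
The plan is to deduce the statement from the trace estimate of \Cref{Witt_trace}, and then to sharpen the resulting bound by exploiting that the $(n-1)$-fold Frobenius forces all relevant valuations to be divisible by $p^{n-1}$. First I would dispose of the case $r=0$, which is the integrality $\Tr_{L'/L}(\mathrm{W}_n(\mathcal{O}_{L'}))\subset\mathrm{W}_n(\mathcal{O}_L)$; this is immediate from the formula $i_{\overline L/L}\circ\Tr_{L'/L}=n_{\mathrm{insep}}\sum_\sigma\sigma$ appearing in the proof of \Cref{Witt_trace}, since each embedding $\sigma$ preserves integrality. So assume $r>0$, fix uniformizers $t\in\mathcal{O}_L$ and $t'\in\mathcal{O}_{L'}$, and write $t=u(t')^e$ with $u\in\mathcal{O}_{L'}^\times$. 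Given $a\in\Fil_r\mathrm{W}_n(L')$, set $b=\mathrm{F}^{n-1}(a)$; by the definition of the Brylinski--Kato filtration the hypothesis is equivalent to $[t'^{\,\ceil{r}-1}]b\in\mathrm{W}_n(\mathcal{O}_{L'})$.

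Next I would transport this bound through the trace. Since $\mathrm{F}$ is induced by the absolute Frobenius it commutes with transfers, so $\mathrm{F}^{n-1}(\Tr_{L'/L}a)=\Tr_{L'/L}(b)=:d$. Multiplying by the downstairs Teichm\"uller element $[t^{\ceil{r/e}}]$ and using the projection formula gives $[t^{\ceil{r/e}}]d=\Tr_{L'/L}([t^{\ceil{r/e}}]b)$. Writing $[t^{\ceil{r/e}}]=[u^{\ceil{r/e}}][t'^{\,e\ceil{r/e}}]$ and factoring out $[t'^{\,\ceil{r}-1}]b$, the remaining exponent of $t'$ is $e\ceil{r/e}-\ceil{r}+1$, which is $\geq 1$ because $e\ceil{r/e}\geq\ceil{r}$. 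Hence $[t^{\ceil{r/e}}]b\in[t']\mathrm{W}_n(\mathcal{O}_{L'})$, and \Cref{Witt_trace} yields $i_{\overline L/L}\bigl([t^{\ceil{r/e}}]d\bigr)\in[t']\mathrm{W}_n(\mathcal{O}_{\overline L})$.

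Finally I would extract the sharp bound. Let $w$ be the valuation on $\overline L$ with $w|_L=v_L$ and $w(t')=1/e$. The $j$-th component of an element of $[t']\mathrm{W}_n(\mathcal{O}_{\overline L})$ has $w\geq p^j/e$, while the $j$-th component of $[t^{\ceil{r/e}}]d$ is $t^{p^j\ceil{r/e}}d_j$; therefore $w(d_j)\geq p^j/e-p^j\ceil{r/e}>-p^j\ceil{r/e}$. The decisive point is that $d_j=\bigl((\Tr_{L'/L}a)_j\bigr)^{p^{n-1}}$ lies in $L$, so $v_L(d_j)$ is divisible by $p^{n-1}$ and $v_L(d_j)/p^j=p^{\,n-1-j}v_L((\Tr_{L'/L}a)_j)$ is an integer for all $j\leq n-1$. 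An integer strictly greater than $-\ceil{r/e}$ is $\geq-\ceil{r/e}+1$, whence $v_L(d_j)\geq-p^j(\ceil{r/e}-1)$ for every $j$; by \Cref{Witt_filtration_valuation} this says precisely that $\Tr_{L'/L}(a)\in\Fil_{r/e}\mathrm{W}_n(L)$. The main obstacle is exactly this last step: \Cref{Witt_trace} by itself gives a bound weaker than required by a factor of roughly $e$, and it is the divisibility of $v_L(d_j)$ by $p^{n-1}$---a consequence of the $(n-1)$-fold Frobenius built into the filtration---that upgrades the strict weak inequality to the integral bound $-p^j(\ceil{r/e}-1)$.
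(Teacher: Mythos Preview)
Your proof is correct and follows essentially the same route as the paper: both use \Cref{Witt_trace} together with $e\ceil{r/e}\geq\ceil{r}$ to land $[t^{\ceil{r/e}}]\mathrm{F}^{n-1}(\Tr_{L'/L}a)$ inside $[t']\mathrm{W}_n(\mathcal{O}_{\overline L})$, and then sharpen the resulting strict inequality $>-p^j\ceil{r/e}$ to $\geq -p^j(\ceil{r/e}-1)$ by observing that $p^{n-1}v_L\bigl((\Tr_{L'/L}a)_j\bigr)+p^j\ceil{r/e}\in p^j\mathbb{Z}$. Your version is a bit more explicit about the $r=0$ case and the commutation of $\mathrm{F}$ and $\Tr$, but the argument is the same.
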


\begin{proof}
	Take uniformizers $t\in \mathcal{O}_L$, $t'\in \mathcal{O}_{L'}$ and write $t=u(t')^e$ with $u\in \mathcal{O}_{L'}^\times$.
	Let $a\in \Fil_r \mathrm{W}_n(L')$.
	Then we have $[t'^{\ceil{r}-1}]\mathrm{F}^{n-1}(a)\in \mathrm{W}_n(\mathcal{O}_{L'})$.
	Since $e\ceil{r/e}\geq \ceil{r}$, this implies $[t'^{e\ceil{r/e}}]\mathrm{F}^{n-1}(a)\in [t']\cdot \mathrm{W}_n(\mathcal{O}_{L'})$.
        Therefore we have
	\begin{align*}
		i_{\overline{L}/L}\bigl([t^{\ceil{r/e}}]\mathrm{F}^{n-1}(\Tr_{L'/L}(a))\bigr)
		&{}=i_{\overline{L}/L}\circ\Tr_{L'/L}\bigl([u^{\ceil{r/e}}t'^{e\ceil{r/e}}]\mathrm{F}^{n-1}(a)\bigr)\\
		&{}\in i_{\overline{L}/L}\circ \Tr_{L'/L}\bigl([t']\cdot \mathrm{W}_n(\mathcal{O}_{L'})\bigl)\\
            &{}\subset [t']\cdot \mathrm{W}_n(\mathcal{O}_{\overline{L}}).
	\end{align*}
        Here, we used \Cref{Witt_trace} for the last inclusion.
	If we set $\Tr_{L'/L}(a)=b=(b_0,\dots,b_{n-1})$, then the above computation shows that
	$$
		p^{n-1}v_L(b_i) + p^i \ceil{r/e}\geq p^i/e
	$$
	for $i=0,1,\dots,n-1$.
	Since the left hand side belongs to $p^i\mathbb{Z}$, we can replace the right hand side by $p^i$.
        Therefore we get
	$$
		p^{n-1}v_L(b_i)+p^i(\ceil{r/e}-1)\geq 0
	$$
	for $i=0,1,\dots,n-1$.
	This implies $\Tr_{L'/L}(a)=b\in \Fil_{r/e}\mathrm{W}_n(L)$.
\end{proof}

We write $\MW_n$ for the Nisnevich sheaf on $\MCor^\mathbb{Q}$ associated to the Brylinski-Kato filtration on $\mathrm{W}_n$.
In order to give an explicit formula for $\MW_n$, we recall the notion of \emph{Witt divisorial sheaf} which was introduced by Tanaka \cite{Tanaka_Witt}.

\begin{definition}[{\cite[Definition 3.1]{Tanaka_Witt}}]
	Let $X\in \Sm$ and let $D=\sum_{i=1}^s r_i D_i$ be a $\mathbb{Q}$-divisor on $X$.
	Let $\xi_i$ denote the generic point of $D_i$ and $v_i$ denote the discrete valuation on $\mathcal{O}_{X,\xi_i}$.
	Let $j\colon X\setminus|D|\to X$ denote the inclusion.
	The \emph{Witt divisorial sheaf} $\mathrm{W}_n\mathcal{O}_X(D)$ is the Zariski subsheaf of $j_*\mathrm{W}_n\mathcal{O}_{X\setminus |D|}$ defined by
	$$
		U\mapsto \bigl\{(a_0,\dots,a_{n-1})\in \Gamma(U, j_*\mathrm{W}_n\mathcal{O}_{X\setminus|D|})\bigm| \xi_i\in U\implies v_i(a_i)+p^i r_i\geq 0\bigr\}.
	$$
	It is known that $\mathrm{W}_n\mathcal{O}_X(D)$ is a quasi-coherent $\mathrm{W}_n\mathcal{O}_X$-module \cite[Lemma 3.5 (2)]{Tanaka_Witt}.
\end{definition}

\begin{lemma}\label{Witt_log_formula}
	For any $\mathcal{X}\in \MCor^{\mathbb{Q},\ls}$, we have
	$$
		\MW_n(\mathcal{X})=\Gamma\bigl(X,\mathrm{W}_n\mathcal{O}_X((\ceil{D_X}-|D_X|)/p^{n-1})\bigr).
	$$
	In particular, $\MW_n$ has left continuity and affine vanishing property.
\end{lemma}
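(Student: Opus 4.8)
The plan is to establish the displayed formula first, and then read off the two ``in particular'' assertions from it. Both $(\MW_n)_\mathcal{X}$ and $\mathrm{W}_n\mathcal{O}_X((\ceil{D_X}-|D_X|)/p^{n-1})$ are subsheaves of $j_*\mathrm{W}_n\mathcal{O}_{X^\circ}$, so it is enough to identify them Zariski-locally and then take global sections; thus I may assume that $\mathcal{X}$ carries a coordinate $x_1,\dots,x_d$ with $D_X=\sum_{i=1}^s r_iD_i$ and $D_i=\{x_i=0\}$. The guiding idea is to translate membership in the Brylinski--Kato filtration into the componentwise valuation inequalities of \Cref{Witt_filtration_valuation}; this is exactly what makes the Witt case run parallel to the differential-forms computation in \Cref{omega_log_formula}.

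I would begin with the inclusion $\MW_n(\mathcal{X})\subset\Gamma(X,\mathrm{W}_n\mathcal{O}_X((\ceil{D_X}-|D_X|)/p^{n-1}))$, which I expect to be immediate. The generic point $\xi_i$ of each $D_i$ furnishes a geometric henselian DVF $L_i=\Frac \mathcal{O}^h_{X,\xi_i}$ with uniformizer $x_i$, together with the tautological diagram of the form \Cref{DVR_diagram}, for which $v_{L_i}(\widetilde\rho^*D_X)=r_i$. Boundedness of $a=(a_0,\dots,a_{n-1})$ then forces $a|_{L_i}\in\Fil_{r_i}\mathrm{W}_n(L_i)$, which by \Cref{Witt_filtration_valuation} is precisely the condition $p^{n-1}v_i(a_j)+p^j(\ceil{r_i}-1)\ge0$ (for all Witt components $j$) defining the Witt divisorial sheaf at $\xi_i$. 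Since the Witt divisorial sheaf is cut out by exactly these codimension-one conditions, $a$ lies in the right-hand side.

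The reverse inclusion is the crux. Given $a=(a_0,\dots,a_{n-1})$ in the right-hand side and a diagram \Cref{DVR_diagram}, I set $e_i=v_L(\rho^*x_i)$, so that $v_L(\widetilde\rho^*D_X)=\sum_i e_ir_i$, and I must verify $\rho^*a\in\Fil_{\sum_ie_ir_i}\mathrm{W}_n(L)$. If all $e_i$ vanish then the closed point of $\Spec\mathcal{O}_L$ avoids $|D_X|$, so $\widetilde\rho$ factors through $X^\circ$ and $\rho^*a\in\mathrm{W}_n(\mathcal{O}_L)=\Fil_0$; hence I may assume some $e_i>0$. Using normality of $X$, I write each component as $a_j=g_j/\prod_i x_i^{M_{j,i}}$ with $g_j\in\Gamma(X,\mathcal{O}_X)$ and with pole orders bounded by the divisorial condition, $p^{n-1}M_{j,i}\le p^j(\ceil{r_i}-1)$. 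Since $v_L(\rho^*g_j)\ge0$, this yields $p^{n-1}v_L(\rho^*a_j)\ge -p^j\sum_i e_i(\ceil{r_i}-1)$. Invoking the combinatorial inequality $\sum_i e_i(\ceil{r_i}-1)\le\ceil{\sum_i e_ir_i}-1$ already established in the proof of \Cref{omega_log_formula}, I conclude $p^{n-1}v_L(\rho^*a_j)+p^j(\ceil{\sum_ie_ir_i}-1)\ge0$ for every $j$, which by \Cref{Witt_filtration_valuation} is exactly $\rho^*a\in\Fil_{\sum_ie_ir_i}\mathrm{W}_n(L)$. This gives $a\in\MW_n(\mathcal{X})$ and completes the formula.

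Finally I would deduce the two consequences. Left continuity is clear from the formula, since for $r_i\in\mathbb{Q}_{>0}$ one has $\ceil{(1-\varepsilon)r_i}=\ceil{r_i}$ for all sufficiently small $\varepsilon>0$, so the divisor $(\ceil{(1-\varepsilon)D_X}-|D_X|)/p^{n-1}$ stabilizes and the transition maps in the colimit are eventually identities. The affine vanishing property follows because $\mathrm{W}_n\mathcal{O}_X(D)$ is a quasi-coherent $\mathrm{W}_n\mathcal{O}_X$-module by \cite[Lemma 3.5 (2)]{Tanaka_Witt}: the projection $\pr_1$ is affine, so higher direct images of a quasi-coherent module vanish (if one prefers to work with $\mathcal{O}_X$-modules, one may filter by Verschiebung and reduce to quasi-coherent $\mathcal{O}_X$-modules). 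I expect the reverse inclusion to be the only genuine obstacle, the delicate part being the simultaneous bookkeeping of the Witt-component index $j$ and the divisor-component index $i$; \Cref{Witt_filtration_valuation} is what defuses this by reducing everything to the same inequality used for $\MOmega^q$.
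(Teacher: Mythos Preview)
Your proof is correct and follows essentially the same approach as the paper. The only cosmetic difference is in the reverse inclusion: the paper packages the argument via the single identity
\[
\rho^*\bigl([x_1^{\ceil{r_1}-1}\cdots x_s^{\ceil{r_s}-1}]\mathrm{F}^{n-1}(a)\bigr)=[ut^{\sum_i e_i(\ceil{r_i}-1)}]\mathrm{F}^{n-1}(\rho^*a)
\]
and then reads off membership in $\mathrm{W}_n(\mathcal{O}_L)$ directly from the definition of the Brylinski--Kato filtration, whereas you unpack each Witt component $a_j$ separately and appeal to the componentwise valuation criterion of \Cref{Witt_filtration_valuation}; these are two presentations of the same computation, and both feed into the identical combinatorial inequality $\sum_i e_i(\ceil{r_i}-1)\le\ceil{\sum_i e_ir_i}-1$.
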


\begin{proof}
	It is clear from \Cref{Witt_filtration_valuation} that the left hand side is contained in the right hand side.
	Let us prove the reverse inclusion.
	We may assume that $\mathcal{X}$ has a coordinate $x_1,\dots,x_d$ such that $D_X=\sum_{i=1}^s r_i D_i$ where $D_i=\{x_i=0\}$.
	Let $a\in \Gamma\bigl(X,\mathrm{W}_n\mathcal{O}_X((\ceil{D_X}-|D_X|)/p^{n-1})\bigr)$.
	Let $L\in \Phi$ and let $t\in \mathcal{O}_L$ be a uniformizer.
	Suppose that we have a commutative diagram of the form \Cref{DVR_diagram}.
	We set $e_i=v_L(\rho^*x_i)$ and assume that $e_i>0$ for some $i$.
	Then there is some unit $u\in \mathcal{O}_L^\times$ such that
	$$
		\rho^*\bigl([x_1^{\ceil{r_1}-1}\cdots x_s^{\ceil{r_s}-1}]\mathrm{F}^{n-1}(a)\bigr)
            =
            [u t^{\sum_{i=1}^s e_i(\ceil{r_i}-1)}]\mathrm{F}^{n-1}(\rho^*a).
        $$
        The left hand side is an element of
        $\mathrm{W}_n(\mathcal{O}_L)$, so we get
        $$
            [t^{\sum_{i=1}^s e_i (\ceil{r_i}-1)}]\mathrm{F}^{n-1}(\rho^*a) \in \mathrm{W}_n(\mathcal{O}_L).
        $$
	Now we observe that the inequality $\sum_{i=1}^s e_i(\ceil{r_i}-1)<\sum_{i=1}^s e_i r_i\leq \ceil{\sum_{i=1}^s e_i r_i}$ implies $\sum_{i=1}^s e_i(\ceil{r_i}-1)\leq \ceil{\sum_{i=1}^s e_i r_i}-1$.
        Therefore we get
	$$
		[t^{\ceil{\sum_{i=1}^s e_i r_i}-1}]\mathrm{F}^{n-1}(\rho^*a) \in \mathrm{W}_n(\mathcal{O}_L).
	$$
    This shows that $a\in \MW_n(\mathcal{X})$ and hence the equality holds.
    The left continuity is clear from the equality, and the affine vanishing property follows from the fact that the sheaf $\mathrm{W}_n\mathcal{O}_X((\ceil{D_X}-|D_X|)/p^{n-1})$ is a quasi-coherent $\mathrm{W}_n\mathcal{O}_X$-module.
\end{proof}

\begin{lemma}\label{Witt_CCI}
	The modulus sheaf $\MW_n$ has cohomological ls cube invariance.
\end{lemma}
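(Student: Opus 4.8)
The plan is to reduce the statement to a computation of quasi-coherent cohomology via the explicit formula of \Cref{Witt_log_formula}. Writing $E=(\ceil{D_X}-|D_X|)/p^{n-1}$, that lemma identifies $(\MW_n)_\mathcal{X}$ with the quasi-coherent sheaf $\mathrm{W}_n\mathcal{O}_X(E)$. For the product the key observation is that the reduced fibre $[\infty]$ enters $D_{\mathcal{X}\otimes\bcube}=\pr_1^*D_X+\pr_2^*[\infty]$ with integral coefficient $1$, so $\ceil{D_{\mathcal{X}\otimes\bcube}}-|D_{\mathcal{X}\otimes\bcube}|=\pr_1^*(\ceil{D_X}-|D_X|)$; hence $(\MW_n)_{\mathcal{X}\otimes\bcube}\simeq \mathrm{W}_n\mathcal{O}_{X\times\mathbb{P}^1}(\pr_1^*E)$, with no contribution along $X\times\{\infty\}$. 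Thus it suffices to show that the structural (pullback) map induces an isomorphism $\mathrm{W}_n\mathcal{O}_X(E)\xrightarrow{\sim}\mathrm{R}\pr_{1,*}\mathrm{W}_n\mathcal{O}_{X\times\mathbb{P}^1}(\pr_1^*E)$ and then to apply $\RGamma(X,-)$.

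First I would prove, for every integer $m\geq 1$ and every $\mathbb{Q}$-divisor $E$ on $X$, the auxiliary assertion that the canonical map $\mathrm{W}_m\mathcal{O}_X(E)\to \mathrm{R}\pr_{1,*}\mathrm{W}_m\mathcal{O}_{X\times\mathbb{P}^1}(\pr_1^*E)$ is an isomorphism, by induction on $m$. The engine is the Verschiebung–restriction short exact sequence of Witt divisorial sheaves
$$0\to \mathcal{O}_X(\lfloor p^{m-1}E\rfloor)\xrightarrow{\mathrm{V}^{m-1}} \mathrm{W}_m\mathcal{O}_X(E)\xrightarrow{\mathrm{R}} \mathrm{W}_{m-1}\mathcal{O}_X(E)\to 0,$$
obtained by intersecting the usual sequence $0\to\mathcal{O}_X\xrightarrow{\mathrm{V}^{m-1}}\mathrm{W}_m\mathcal{O}_X\xrightarrow{\mathrm{R}}\mathrm{W}_{m-1}\mathcal{O}_X\to 0$ with the valuative conditions of \Cref{Witt_filtration_valuation}; exactness is checked on stalks at the generic points of $|E|$ (after truncation the last Witt component is unconstrained, giving surjectivity of $\mathrm{R}$, while the kernel consists of vectors $(0,\dots,0,a)$ with $v_i(a)+p^{m-1}e_i\geq 0$). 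The same sequence holds on $X\times\mathbb{P}^1$ after pulling $E$ back, and there its kernel is $\pr_1^*\mathcal{O}_X(\lfloor p^{m-1}E\rfloor)$ because $\pr_1^*E$ has no vertical components.

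In the inductive step I would apply $\mathrm{R}\pr_{1,*}$ to both sequences and compare. The kernel term is the pullback of a line bundle, so the projection formula together with $\mathrm{R}\pr_{1,*}\mathcal{O}_{X\times\mathbb{P}^1}=\mathcal{O}_X$ (that is, $\mathrm{H}^0(\mathbb{P}^1,\mathcal{O})=k$ and $\mathrm{H}^1(\mathbb{P}^1,\mathcal{O})=0$) gives $\mathrm{R}\pr_{1,*}\pr_1^*\mathcal{O}_X(\lfloor p^{m-1}E\rfloor)=\mathcal{O}_X(\lfloor p^{m-1}E\rfloor)$ in degree $0$, while the quotient term is controlled by the induction hypothesis for $m-1$ with the same $E$. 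The long exact sequence then forces $\mathrm{R}^i\pr_{1,*}=0$ for $i>0$, and the five lemma, applied to the map from the sequence on $X$ to the pushed-forward sequence, identifies $\pr_{1,*}\mathrm{W}_m\mathcal{O}_{X\times\mathbb{P}^1}(\pr_1^*E)$ with $\mathrm{W}_m\mathcal{O}_X(E)$. The base case $m=1$ is precisely the projection formula, since $\mathrm{W}_1\mathcal{O}_X(E)=\mathcal{O}_X(\lfloor E\rfloor)$. Specializing to $E=(\ceil{D_X}-|D_X|)/p^{n-1}$ and $m=n$ and applying $\RGamma(X,-)$ yields the cohomological cube invariance of $\MW_n$.

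The main obstacle is the bookkeeping around this dévissage: one must verify that the Verschiebung–restriction sequence remains exact after imposing Tanaka's divisorial conditions, that it is compatible with pullback along $\pr_1$ (so the kernel is genuinely a pulled-back line bundle and the projection formula applies termwise to the graded pieces, which are $\mathcal{O}$-quasi-coherent), and that the comparison map produced by the induction coincides with the structural cube-invariance map. By contrast, the geometry of $\mathbb{P}^1$ enters only through the harmless identity $\mathrm{R}\pr_{1,*}\mathcal{O}_{X\times\mathbb{P}^1}=\mathcal{O}_X$, so the argument is essentially the $\MOmega^q$ computation of \Cref{omega_CCI} with the direct-sum splitting replaced by the Witt-length filtration.
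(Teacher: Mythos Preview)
Your proposal is correct and follows essentially the same route as the paper: both reduce via \Cref{Witt_log_formula} to comparing $\mathrm{W}_m\mathcal{O}_X(E)$ with $\mathrm{R}\pr_{1,*}\mathrm{W}_m\mathcal{O}_{X\times\mathbb{P}^1}(\pr_1^*E)$, and both run the induction on Witt length using the Verschiebung--restriction sequence and the five lemma. The only differences are cosmetic: the paper cites Tanaka's \cite[Proposition 3.7]{Tanaka_Witt} for the exact sequence (where the kernel is recorded as $(\mathrm{F}_X^{m-1})_*\mathcal{O}_X(p^{m-1}E)$ to keep track of the $\mathrm{W}_m\mathcal{O}_X$-module structure) rather than rederiving it, and it phrases the comparison at the level of $\RGamma$ rather than $\mathrm{R}\pr_{1,*}$.
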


\begin{proof}
    Let $X\in \Sm$.
    By \Cref{Witt_log_formula}, it suffices to show that the canonical morphism
    $$
        \RGamma\bigl(X,\mathrm{W}_n\mathcal{O}_X(D)\bigr)\to \RGamma\bigl(X\times\mathbb{P}^1,\mathrm{W}_n\mathcal{O}_{X\times\mathbb{P}^1}(\pr_1^*D)\bigr)
    $$
    is an isomorphism for any $\mathbb{Q}$-divisor $D$ on $X$.
    We prove this by induction on $n$.
    If $n=1$, then what we have to show is that the canonical morphism
    $\mathcal{O}_X(D)\to \mathrm{R}\pr_{1,*}\pr_1^*\mathcal{O}_X(D)$
    is an isomorphism, which is easy.
    Suppose that $n\geq 2$.
    By \cite[Proposition 3.7]{Tanaka_Witt}, we have an exact sequence
    $$
        0\to (\mathrm{F}^{n-1}_X)_*\mathcal{O}_X(p^{n-1}D)\to \mathrm{W}_n\mathcal{O}_X(D)\to \mathrm{W}_{n-1}\mathcal{O}_X(D)\to 0
    $$
    of Zariski sheaves on $X$, where $\mathrm{F}_X\colon X\to X$ is the absolute Frobenius morphism.
    We also have a similar exact sequence of Zariski sheaves on $X\times\mathbb{P}^1$:
    $$
        0\to (\mathrm{F}^{n-1}_{X\times \mathbb{P}^1})_*\mathcal{O}_{X\times \mathbb{P}^1}(p^{n-1}\pr_1^*D)\to \mathrm{W}_n\mathcal{O}_{X\times \mathbb{P}^1}(\pr_1^*D)\to \mathrm{W}_{n-1}\mathcal{O}_{X\times \mathbb{P}^1}(\pr_1^*D)\to 0.
    $$
    Therefore the claim follows from the induction hypothesis and the five lemma.
\end{proof}

\begin{proof}[Proof of \Cref{Witt_CBI}]
    Since $\MW_n$ has cohomological cube invariance by \Cref{Witt_CCI} and has left continuity and affine vanishing property by \Cref{Witt_log_formula}, the claim follows from \Cref{main}.
\end{proof}

\section{Relation to motives with modulus}

In this section we discuss the relationship between our main theorem and the theory of motives with modulus.
We show that the Hodge cohomology with modulus and the Witt vector cohomology with modulus are representable in the category of motives with modulus if we assume resolution of singularities.
\begin{definition}\label{def:resolution}
	We say that $k$ admits \emph{resolution of singularities} if the following conditions hold:
	\begin{enumerate}
            \item[(RS1)]
            For any $\mathcal{X}\in \MCor$, there exists $\mathcal{Y}\in \MCor^\ls$ and a proper minimal ambient morphism $f\colon \mathcal{Y}\to \mathcal{X}$ such that $f^\circ\colon Y^\circ\to X^\circ$ is an isomorphism.
            \item[(RS2)]
            Let $\mathcal{X}\in \MCor^\ls$ and let $f\colon \mathcal{Y}\to \mathcal{X}$ be a proper minimal ambient morphism such that $f^\circ\colon Y^\circ\to X^\circ$ is an isomorphism.
            Then there is a sequence of blow-ups $\mathcal{X}_n\to \mathcal{X}_{n-1}\to\dots\to \mathcal{X}_0=\mathcal{X}$ with the following properties:
            \begin{enumerate}
                \item The center of the blow-up $\mathcal{X}_i\to \mathcal{X}_{i-1}$ is a smooth closed subscheme of $|D_X|$ which has simple normal crossings with $|D_X|$.
                \item The morphism $\mathcal{X}_n\to \mathcal{X}$ factors through $\mathcal{Y}$.
            \end{enumerate}
	\end{enumerate}
\end{definition}

We recall the construction of the category of motives with modulus.
For $\mathcal{X}\in \MCor$, we write $\mathbb{Z}_\tr(\mathcal{X})$ for the presheaf of abelian groups on $\MCor$ represented by $\mathcal{X}$.
It is known that $\mathbb{Z}_\tr(\mathcal{X})$ is a Nisnevich sheaf \cite[Proposition 3.3.5]{KMSY1}.

\begin{definition}
    Let $\mathcal{D}\bigl(\Sh_\Nis(\MCor)\bigr)$ denote the derived $\infty$-category of $\Sh_\Nis(\MCor)$.
    Let $(\mathrm{CI})$ be the localizing subcategory of $\mathcal{D}\bigl(\Sh_\Nis(\MCor)\bigr)$ generated by the cofibers of $\pr_1\colon \mathbb{Z}_\tr(\mathcal{X}\otimes \bcube)\to \mathbb{Z}_\tr(\mathcal{X})$ for $\mathcal{X}\in \MCor$.
	The category of motives with modulus $\MDM^\eff$ is defined as the Verdier quotient
	$$
		\MDM^\eff = \frac{\mathcal{D}\bigl(\Sh_\Nis(\MCor)\bigr)}{(\mathrm{CI})}
	$$
	in the sense of \cite[Definition 5.4]{BGT}.
	By \cite[Theorem 4.1.1]{KMSY3}, this is equivalent to the original definition \cite[Definition 3.2.4]{KMSY3} after taking the homotopy category.
    We write $\mathrm{M}(\mathcal{X})$ for the image of $\mathbb{Z}_\tr(\mathcal{X})$ in $\MDM^\eff$.
\end{definition}

\begin{lemma}\label{general_realization}
    Assume that $k$ admits resolution of singularities.
    Let $F$ be a Nisnevich sheaf on $\MCor$ which has cohomological cube invariance.
    Suppose that for any $\mathcal{X}\in \MCor^\ls$ and any smooth closed subscheme $Z\subset|D_X|$ which has simple normal crossings with $|D_X|$, the morphism
    $$
         \RGamma(X,F_\mathcal{X})\to \RGamma(\Bl_ZX,F_{\Bl_Z\mathcal{X}})
    $$
    is an isomorphism.
    Then there is an object $\mathbf{F}\in \MDM^\eff$ such that for $\mathcal{X}\in \MCor^\ls$, there is an isomorphism
	$$
		\map_{\MDM^\eff}\bigl(\mathrm{M}(\mathcal{X}),\mathbf{F}\bigr) \simeq \RGamma(X,F_\mathcal{X}).
	$$
\end{lemma}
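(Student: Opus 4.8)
The plan is to take $\mathbf{F}$ to be the image $L(F)$ of $F$ under the localization functor $L\colon\mathcal{D}\bigl(\Sh_\Nis(\MCor)\bigr)\to\MDM^\eff$, where I regard $F$ as a complex concentrated in degree $0$. Since $(\mathrm{CI})$ is generated by the cofibers of the maps $\pr_1\colon\mathbb{Z}_\tr(\mathcal{Z}\otimes\bcube)\to\mathbb{Z}_\tr(\mathcal{Z})$, the Verdier quotient is a Bousfield localization: the right adjoint $\iota$ of $L$ is fully faithful and identifies $\MDM^\eff$ with the subcategory of cube-invariant complexes, i.e.\ those $G$ for which $\map_\mathcal{D}\bigl(\mathbb{Z}_\tr\mathcal{Z},G\bigr)\to\map_\mathcal{D}\bigl(\mathbb{Z}_\tr(\mathcal{Z}\otimes\bcube),G\bigr)$ is an equivalence for every $\mathcal{Z}\in\MCor$. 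By adjunction and the Yoneda formula $\map_\mathcal{D}(\mathbb{Z}_\tr\mathcal{Z},G)\simeq\RGamma_\Nis(\mathcal{Z},G)$, the statement reduces to proving
$$\RGamma_\Nis(\mathcal{X},\iota L F)\simeq\RGamma(X,F_\mathcal{X})\qquad(\mathcal{X}\in\MCor^\ls).$$

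First I would analyze the unit map $F\to\iota LF$. The target is cube-invariant by construction, so the task splits into two parts: (A) showing that $F$ itself is already cube-invariant as an object of $\mathcal{D}\bigl(\Sh_\Nis(\MCor)\bigr)$, so that $\iota LF\simeq F$; and (B) comparing the motivic (transfer-theoretic) cohomology $\RGamma_\Nis(\mathcal{X},F)$ with the naive scheme cohomology $\RGamma(X,F_\mathcal{X})$ for log-smooth $\mathcal{X}$. For (A), the hypothesis gives cube invariance only for $\mathcal{X}\in\MCor^\ls$, so I would use resolution of singularities to extend it: given an arbitrary $\mathcal{Z}\in\MCor$, condition (RS1) produces a log-smooth $\mathcal{Y}$ together with a proper minimal ambient morphism $\mathcal{Y}\to\mathcal{Z}$ that is an isomorphism on the interiors, hence an isomorphism in $\MCor$ (its transpose is an inverse). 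Tensoring with $\bcube$ preserves this isomorphism and keeps $\mathcal{Y}\otimes\bcube$ log-smooth, so $\mathbb{Z}_\tr\mathcal{Z}\simeq\mathbb{Z}_\tr\mathcal{Y}$ and $\mathbb{Z}_\tr(\mathcal{Z}\otimes\bcube)\simeq\mathbb{Z}_\tr(\mathcal{Y}\otimes\bcube)$ identify the cube-invariance square at $\mathcal{Z}$ with the one at $\mathcal{Y}$, reducing (A) to the log-smooth case.

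For (B) I would establish a comparison, analogous to Voevodsky's theorem on Nisnevich cohomology of homotopy-invariant sheaves with transfers, asserting that for a cube-invariant complex the cohomology computed with transfers in $\MCor$ agrees with the Nisnevich cohomology of the scheme $X$ with coefficients in $F_\mathcal{X}$. The blow-up invariance hypothesis enters precisely here: $\mathbb{Z}_\tr(\Bl_Z\mathcal{X})\to\mathbb{Z}_\tr\mathcal{X}$ is already an isomorphism in $\MCor$, whereas the corresponding scheme-level map $\RGamma(X,F_\mathcal{X})\to\RGamma(\Bl_ZX,F_{\Bl_Z\mathcal{X}})$ is not formally invertible, and the assumed blow-up invariance is what forces the two pictures to coincide. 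Condition (RS2) then guarantees that the identification in (B) is independent of the chosen resolution, since any two log-smooth models of a given motive are connected by a chain of blow-ups along smooth centers having simple normal crossings with the divisor, along which the comparison maps are isomorphisms by hypothesis.

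The main obstacle I anticipate is exactly the comparison (B), and the difficulty is structural rather than computational. In $\MCor$ every admissible blow-up is already invertible, so the motivic mapping space is automatically insensitive to blow-ups, whereas $\RGamma(X,F_\mathcal{X})$ is blow-up invariant only by virtue of the hypothesis (equivalently, of \Cref{main}). Bridging this gap—making the abstract transfer cohomology of the cube-local object $\iota LF$ literally compute the concrete scheme cohomology of $F_\mathcal{X}$ on every log-smooth pair—is where cube invariance, blow-up invariance, and resolution of singularities must all be combined, and it is the technical heart of the argument.
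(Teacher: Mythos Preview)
Your overall architecture matches the paper's: set $\mathbf{F}=L(F)$, show $F$ is already cube-local so that $\iota LF\simeq F$, and then identify $\map_{\mathcal{D}(\Sh_\Nis(\MCor))}\bigl(\mathbb{Z}_\tr(\mathcal{X}),F\bigr)$ with $\RGamma(X,F_\mathcal{X})$ for $\mathcal{X}\in\MCor^\ls$. Your reduction of (A) to the log-smooth case via (RS1) is exactly what the paper does. The logical order, however, is reversed: you need (B) first, because verifying cube-locality for log-smooth $\mathcal{Y}$ already requires identifying $\map_\mathcal{D}(\mathbb{Z}_\tr(\mathcal{Y}),F)$ with $\RGamma(Y,F_\mathcal{Y})$.

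The genuine gap is in (B). You correctly flag it as the heart of the argument but do not supply the mechanism. The point is that $\map_{\mathcal{D}(\Sh_\Nis(\MCor))}\bigl(\mathbb{Z}_\tr(\mathcal{X}),F\bigr)$ is \emph{not} a priori $\RGamma(X,F_\mathcal{X})$: representable sheaves with transfers are not acyclic in the modulus setting, unlike in $\Sh_\Nis(\Cor)$. The paper invokes \cite[Theorem~4.6.3]{KMSY1}, which gives
\[
\Ext^i_{\Sh_\Nis(\MCor)}\bigl(\mathbb{Z}_\tr(\mathcal{X}),F\bigr)\;\simeq\;\colim_{\mathcal{Y}\to\mathcal{X}}\mathrm{H}^i(Y,F_\mathcal{Y}),
\]
the colimit running over proper minimal ambient morphisms that are isomorphisms on interiors. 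This is the bridge you are missing. Once you have it, (RS2) makes towers of blow-ups with smooth SNC centers cofinal in that system, and the blow-up invariance hypothesis collapses the colimit to the single term $\mathrm{H}^i(X,F_\mathcal{X})$. Your discussion of (RS2) and blow-up invariance is the right \emph{second} step, but without the colimit formula there is nothing to apply it to; an appeal to ``an analogue of Voevodsky's theorem'' does not suffice, because the modulus analogue is precisely this colimit formula, not the naive identification.
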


\begin{proof}
    We define $\mathbf{F}$ to be the image of $F$ in $\MDM^\eff$.
    We prove that this object has the required property.
    Let $\mathcal{X}\in \MCor^\ls$.
	By \cite[Theorem 4.6.3]{KMSY1} we have
	\begin{align*}
		\Ext^i_{\Sh_\Nis(\MCor)}\bigl(\mathbb{Z}_\tr(\mathcal{X}),F\bigr)\simeq \colim_{\mathcal{Y}\to \mathcal{X}} \mathrm{H}^i(Y,F_\mathcal{Y})
	\end{align*}
	where the colimit is taken over all proper minimal ambient morphisms $\mathcal{Y}\to \mathcal{X}$ such that $f^\circ\colon Y^\circ\to X^\circ$ is an isomorphism.
	Since $k$ is assumed to satisfy (RS2), it follows from our assumption that the last term is isomorphic to $\mathrm{H}^i(X,F_\mathcal{X})$.
	Therefore the canonical morphism
	$$
		\mathrm{R}\Gamma(X,F_\mathcal{X})\to \map_{\mathcal{D}(\Sh_\Nis(\MCor))} \bigl(\mathbb{Z}_\tr(\mathcal{X}),F\bigr)
	$$
	is an equivalence.
    To show that the right hand side is isomorphic to $\map_{\MDM^\eff}\bigl(\mathrm{M}(\mathcal{X}),\mathbf{F}\bigr)$, it suffices to show that $F$ is cube-local, that is, the morphism
	$$
		\map_{\mathcal{D}(\Sh_\Nis(\MCor))} \bigl(\mathbb{Z}_\tr(\mathcal{Y}),F\bigr)\to
		\map_{\mathcal{D}(\Sh_\Nis(\MCor))} \bigl(\mathbb{Z}_\tr(\mathcal{Y}\otimes\bcube),F\bigr)
	$$
	is an equivalence for any $\mathcal{Y}\in \MCor$.
	By (RS1), we may assume that $\mathcal{Y}\in \MCor^\ls$.
	Then this morphism can be identified with $\mathrm{R}\Gamma(Y,F_\mathcal{Y})\to \mathrm{R}\Gamma(Y\times\mathbb{P}^1,F_\mathcal{Y\otimes\bcube})$, so the claim follows from the cohomological cube invariance.
\end{proof}

\begin{proof}[Proof of \Cref{cor:realization}]
    The sheaf $\MOmega^q$ (resp. $\MW_n$) has cohomological cube invariance by \Cref{omega_CCI} (resp. \Cref{Witt_CCI}).
    It also satisies the condition of \Cref{general_realization} by \Cref{Omega_CBI} (resp. \Cref{Witt_CBI}).
    Therefore the claim follows from \Cref{general_realization}.
\end{proof}

\printbibliography

\end{document}